\documentclass[10pt,twoside]{amsart}

\usepackage{amssymb}
\usepackage{amscd}
\usepackage{amsmath}
\usepackage{amsrefs}
\usepackage{xypic}
\usepackage[OT2,T1]{fontenc}
\usepackage[mathscr]{euscript}
\usepackage{hyperref}
\usepackage{color}
\hypersetup{
    colorlinks=true,
    citecolor=black,
    linkcolor=black,
  urlcolor=blue,
}

\DeclareSymbolFont{cyrletters}{OT2}{wncyr}{m}{n}
\DeclareMathSymbol{\Sha}{\mathalpha}{cyrletters}{"58}

\newtheorem{theorem}{Theorem}[section]
\newtheorem{prop}{Proposition}[section]
\newtheorem{lemma}{Lemma}
\newtheorem{corollary}{Corollary}

\newtheorem{definition}{Definition}
\theoremstyle{remark}
\newtheorem{remark}{Remark}

\newcommand{\ba}{\mathbb A}
\newcommand{\bq}{\mathbb Q}

\newcommand{\bz}{\mathbb Z}
\newcommand{\br}{\mathbb R}
\newcommand{\bc}{\mathbb C}
\newcommand{\bg}{\mathbb G}

\newcommand{\bF}{\mathbb F}

\newcommand{\bl}{\mathbb L}

\newcommand{\co}{\mathcal O}
\newcommand{\cd}{\mathcal D}

\newcommand{\ce}{\mathcal E}
\newcommand{\ca}{\mathcal A}
\newcommand{\cb}{\mathcal B}
\newcommand{\csp}{\mathsf p}

\newcommand{\fa}{\mathfrak a}
\newcommand{\fp}{\mathfrak p}
\newcommand{\fq}{\mathfrak q}
\newcommand{\ff}{\mathfrak f}
\newcommand{\fm}{\mathfrak m}
\newcommand{\fl}{\mathfrak l}
\newcommand{\fg}{\mathfrak g}

\newcommand{\X}{\mathcal X}

\newcommand{\mydet}{\mathrm{det}}

\DeclareMathOperator{\Frob}{Fr}
\DeclareMathOperator{\ord}{ord}
\DeclareMathOperator{\Spec}{Spec}
\DeclareMathOperator{\rank}{rank}

\DeclareMathOperator{\trace}{Tr}
\DeclareMathOperator{\Pic}{Pic}

\DeclareMathOperator{\Ind}{Ind}
\DeclareMathOperator{\Res}{Res}

\DeclareMathOperator{\Br}{Br}

\DeclareMathOperator{\Gal}{Gal}

\DeclareMathOperator{\id}{id}
\DeclareMathOperator{\Hom}{Hom}
\DeclareMathOperator{\End}{End}
\DeclareMathOperator{\Ext}{Ext}
\DeclareMathOperator{\Aut}{Aut}

\DeclareMathOperator{\Lie}{Lie}

\begin{document}

\title[The conjecture of Birch and Swinnerton-Dyer ]{The conjecture of Birch and Swinnerton-Dyer for certain elliptic curves with complex multiplication}
\author{Ashay Burungale}
\author{Matthias Flach}
\dedicatory{Dedicated to John H. Coates}
\address{Dept. of Mathematics 258\\California Institute of Technology\\Pasadena CA 91125\\ And
The University of Texas at Austin, Austin, TX 78712, USA.}
\email{ashayburungale@gmail.com}
\address{Dept. of Mathematics 253-37\\California Institute of Technology\\Pasadena CA 91125\\USA}
\email{flach@caltech.edu}
\begin{abstract} Let $E/F$ be an elliptic curve over a number field $F$ with complex multiplication by the ring of integers in an imaginary quadratic field $K$. We give a complete proof of the conjecture of Birch and Swinnerton-Dyer for $E/F$, as well as its equivariant refinement formulated by Gross \cite{gross2}, under the assumption that $L(E/F,1)\neq 0$ and that $F(E_{tors})/K$ is abelian. We also prove analogous results for CM abelian varieties $A/K$. 
\end{abstract}
\subjclass[2020]{11G40 (primary), 11G15, 11R23}
\keywords{Elliptic curves, Birch and Swinnerton-Dyer Conjecture}
\maketitle
\setcounter{tocdepth}{1}
\tableofcontents
\section{Introduction}
Let $E/F$ be an elliptic curve over a number field $F$ with complex multiplication by the ring of integers in an imaginary quadratic field $K$ and such that $F(E_{tors})/K$ is abelian. 
It is well known that 
the conjecture of Birch and Swinnerton-Dyer for this class of elliptic curves, as well as its $K$-equivariant refinement formulated by Gross \cite{gross2}, is amenable to the Iwasawa theory of the field $K$. 
Indeed, this principle has its origin in the seminal work of Coates and Wiles \cite{coates77} which led to the finiteness of $E(F)$ if $L(E/F,1)\neq 0$. 
About a decade later Rubin \cite{rubin87} showed  the  finiteness of $\Sha(E/F)$ if $L(E/F,1)\neq0$. This remarkable work, partly motivated by the ideas of \cite{thaine88}, gave the very first proof of finiteness of the Tate-Shafarevich group of an abelian variety over a number field. Subsequently, as a consequence of his proof of the Iwasawa main conjecture for $K$, Rubin \cite{rubin91} proved the $\fp$-primary part of Gross' conjecture assuming $F=K$, $L(E/F,1)\neq 0$ and $\fp\nmid |\co_K^\times|$. He also indicated that for general $F$ his arguments give a proof of the $\fp$-primary part of Gross' conjecture if $L(E/F,1)\neq 0$ and 
$$
\fp\nmid |\co_K^\times|\cdot[F:K]\cdot\mathrm{disc}(F/K).
$$

The purpose of this paper is to eliminate these restrictions on the prime ideal $\fp$ of $\co_K$ and give a complete proof of Gross' conjecture if $L(E/F,1)\neq 0$. 
The main result is Theorem \ref{main} below.
Our approach is based on the principle of the equivariant Tamagawa number conjecture: zeta elements generate equivariant determinants of certain \'etale cohomology groups.
The key ingredients of the proof are the two variable Iwasawa main conjecture for $K$ due to Johnson-Leung and Kings \cite{jlk} (based on the Euler system of elliptic units) and 
Kato's reciprocity law \cite{kato00}[Prop. 15.9]  (we use Kato's formulation but the case we need goes back to Wiles \cite{wiles78} and Coates/Wiles \cite{cw78}). 
Our arguments also give the $L$-equivariant Birch and Swinnerton-Dyer conjecture for abelian varieties $A/K$ with complex multiplication by a CM field $L$ if $L(A/K,1)\neq 0$ which we record in Theorem \ref{main2}. In particular this proves a conjecture of Buhler and Gross \cite{bugross85}[Conj. 12.3]. 

We first introduce some notation. For archimedean places $v$ of $F$ we have the $K$-bilinear integration pairings
\begin{equation} H_1(E(F_v),\bq) \times H^0(E,\Omega_{E/F})\otimes_FF_v \to F_v;\quad  (\gamma,\omega)\mapsto \int_\gamma\omega\label{integration}\end{equation}
which jointly induce a $K_\br$-linear period isomorphism
\begin{equation} \prod_{v\mid\infty} H_1(E(F_v),\bq)_\br\cong \Hom_{F}(H^0(E,\Omega_{E/F}),F)_\br.\label{period}\end{equation} 
For each $v\mid \infty$ the period lattice $H_1(E(F_v),\bz)$ is an invertible $\co_K$-module. If $\ce/\co_F$ denotes the N\'eron model of $E$ then $H^0(\ce,\Omega_{\ce/\co_F})$ is an invertible $\co_F$-module and a projective $\co_K$-module of rank $d=[F:K]$, hence so is
$\Hom_{\co_F}(H^0(\ce,\Omega_{\ce/\co_F}) ,\co_F)$. It follows that there is an invertible $\co_K$-submodule $\fa\subset K_\br$ so that
$$\bigotimes\limits_{v\mid\infty} H_1(E(F_v),\bz)=\fa\cdot\mydet_{\co_K}\Hom_{\co_F}(H^0(\ce,\Omega_{\ce/\co_F}) ,\co_F)$$
under the determinant over $K_\br$ of the isomorphism (\ref{period}). We may write 
$$\fa=\Omega\cdot\fa(\Omega)$$
for some period 
$$\Omega\in K_\br^\times\cong\bc^\times$$ 
and fractional $\co_K$-ideal $\fa(\Omega)\subseteq K$. Denote the order ideal of a finite $\co_K$-module $A$ by $|A|_K$ and the cardinality of a finite abelian group $A$ by $|A|$. Let $\Phi_v$ be the component group of the N\'eron model of $E/F$ at the prime $v$.

\begin{theorem} Let $E/F$ be an elliptic curve over a number field $F$ with CM by $\co_K$ for an imaginary quadratic field $K$ and such that $F(E_{tors})/K$ is abelian. Let $\psi:\ba_F^\times/F^\times\to \bc^\times$ be the Hecke character associated to $E/F$ and assume that $L(\overline{\psi},1)\neq 0$. Then $E(F)$ and $\Sha(E/F)$ are finite $\co_K$-modules,
$$ \frac{L(\overline{\psi},1)}{\Omega}\in K^\times$$
and 
$$ \frac{L(\overline{\psi},1)}{\Omega}=\frac{|\Sha(E/F)|_K}{|E(F)|}\cdot \prod_v|\Phi_v|_K\cdot \fa(\Omega) $$
in the group of fractional $\co_K$-ideals. 
\label{main}\end{theorem}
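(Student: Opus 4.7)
The plan is to reformulate the equivariant Birch–Swinnerton-Dyer formula as an equality of fractional $\co_K$-ideals obtained from the equivariant Tamagawa number conjecture, and then to verify it one prime $\fp$ of $\co_K$ at a time. For each $\fp$, the $\fp$-adic Tate module $T_\fp E$ defines a $G_F$-representation over $\co_{K,\fp}$, and the $\fp$-part of the identity in Theorem \ref{main} translates into the statement that a specific zeta element generates the determinant (in the sense of Knudsen–Mumford) of a perfect complex of $\co_{K,\fp}$-modules whose cohomology computes the appropriate Selmer groups at $\fp$. Because a fractional $\co_K$-ideal is determined by its localisations at all primes of $\co_K$, this prime-by-prime verification is sufficient. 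The finiteness of $E(F)$ and $\Sha(E/F)$ as $\co_K$-modules will come out of the same argument, via the standard implication from non-vanishing of $L(\overline\psi,1)$ to finiteness of the relevant Selmer group.

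The first main step is to set up the Iwasawa-theoretic side. Let $p$ be the rational prime below $\fp$, let $K_\infty$ denote the unique $\bz_p^2$-extension of $K$, and work over $L=FK_\infty$. Rubin's Euler system of elliptic units, together with the two-variable Iwasawa main conjecture for $K$ due to Johnson-Leung and Kings \cite{jlk}, gives an equality (not merely a divisibility) of characteristic ideals in the Iwasawa algebra between a certain global cohomology module and the $p$-adic $L$-function attached to $\overline\psi$. Kato's reciprocity law in \cite{kato00}[Prop. 15.9] then identifies the image of the resulting zeta element under a Bloch–Kato dual exponential with the analytic $L$-value $L(\overline\psi,1)$, normalised by the period $\Omega$. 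Assembling these two ingredients yields the ETNC identity at the Iwasawa level in the form of an equality of determinant ideals over the Iwasawa algebra $\co_{K,\fp}[[\Gal(L/K)]]$.

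The second step is descent from the Iwasawa level down to $F$. The hypothesis $L(\overline\psi,1)\neq 0$ implies, via the control theorem applied to the characteristic ideal produced above, that the Selmer complex over $F$ has finite cohomology; from this one reads off finiteness of $E(F)$ and $\Sha(E/F)$, as well as the statement that $L(\overline\psi,1)/\Omega$ is algebraic and lies in $K^\times$. Computing the Euler characteristic of this descent, and carefully identifying each factor with the local contribution it is supposed to match, should produce $|\Sha(E/F)|_K/|E(F)|$ together with the product of local Tamagawa factors $|\Phi_v|_K$; the comparison of integral $\fp$-adic periods with the archimedean period will contribute the ideal $\fa(\Omega)$.

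The hard part, and the reason for Rubin's restrictions $\fp\nmid |\co_K^\times|\cdot[F:K]\cdot\mathrm{disc}(F/K)$, is to run the entire argument \emph{equivariantly} over $\co_{K,\fp}[\Gal(F/K)]$ rather than one character at a time. At primes dividing $|\co_K^\times|$ there is extra cohomology of the roots of unity that must be tracked; at primes dividing $[F:K]$ or $\mathrm{disc}(F/K)$ the group ring is no longer a product of discrete valuation rings, so one cannot invert $\fp$ in the Selmer complex or pass to semisimple quotients. I would instead work throughout with perfect complexes and determinants in $K_0$ of the appropriate integral group ring, and show that the Euler system output, the local Tamagawa factor at each $v$, and the period lattice comparison all match the fine Selmer determinant at the level of $\co_{K,\fp}$-lattices. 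Establishing this integral compatibility equivariantly, especially the local $\fp$-adic period calculation at primes of bad reduction and at primes dividing the conductor of $F/K$, is where I expect the bulk of the technical work to lie.
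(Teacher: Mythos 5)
Your outline correctly identifies the raw ingredients: the Johnson-Leung--Kings two-variable main conjecture, Kato's reciprocity law, and the ETNC framework of perfect complexes and their determinants, together with the prime-by-prime reduction via Prop.\,\ref{keyelliptic}. The reduction of the theorem to the existence of a zeta element with the three properties of Prop.\,\ref{keyelliptic} is also exactly the strategy of the paper (Section \ref{preliminary}).

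However, your final paragraph misses the structural insight that actually removes Rubin's restrictions, and as a result I do not think the route you sketch would close. You propose to ``run the entire argument equivariantly over $\co_{K,\fp}[\Gal(F/K)]$'' and then observe, correctly, that this ring fails to be a product of DVRs when $\fp\mid[F:K]$; but you do not say how to overcome this, only that you would ``work throughout with perfect complexes and determinants in $K_0$ of the appropriate integral group ring.'' The paper avoids the group ring entirely. Because $F(E_{tors})/K$ is abelian, the Weil restriction $B=\Res_{F/K}E$ is a CM abelian variety over $K$ with $L:=\End_K(B)_\bq\simeq L_1\times\cdots\times L_r$ a product of CM fields (Prop.\,\ref{CM}, due to Goldstein--Schappacher), so the maximal order $\co_L$ is a product of Dedekind rings --- regular --- regardless of which prime one is working at. The main conjecture and Kato's reciprocity law are applied to each Serre--Tate character $\varphi_i$ of each isogeny factor $B_i$ and each prime $\csp\mid p$ of $L_i$ (Prop.\,\ref{descent}, Lemma \ref{descent2}); one then \emph{restricts scalars} from $\co_{L_p}$ to $\co_{K_p}$ (Lemma \ref{descent3}) and passes from $B/K$ to $E/F$ by Shapiro's Lemma (Lemma \ref{descent5}). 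The equivariance over $\Gal(F/K)$ that you would encode in a group ring is instead encoded geometrically in $B$ being a $K$-variety, and the prime-by-prime statements are made over products of DVRs where there is no obstruction.

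Two further technical points your sketch does not anticipate but which are essential to the paper's argument. First, $\End_K(B)$ need not be the maximal order $\co_L$, so the Tate-module lattice $H^1(B\otimes_K\bar{\bq},\bz_p)$ need not be an $\co_{L_p}$-lattice; Lemma \ref{descent4} compares this lattice with the $\co_{L_p}$-lattice $T_p(\varphi)$ via a finite module $M$ and Tate's Euler-characteristic formula, and this step cannot be omitted. Second, your ``control theorem'' and ``Euler characteristic of the descent'' language is replaced in the paper by the sharper Lemma \ref{descentlemma}, a clean statement about specialising a determinant of a perfect complex over a two-dimensional regular local ring to its residue field; this is precisely what makes the descent work uniformly at all $\fp$, including $\fp\mid|\co_K^\times|$ and $\fp\mid[F:K]\cdot\mathrm{disc}(F/K)$, without the case analysis Rubin needed. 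In short, the missing idea in your proposal is the replacement of the group ring $\co_K[\Gal(F/K)]$ by the \'etale $K$-algebra $L=\End_K(\Res_{F/K}E)_\bq$ and its maximal order.
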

 \begin{remark}  
 As pointed out by Gross, not only the ideal $|E(F)|$ but also the ideal $|\Sha(E/F)|_K$ is generated by a rational integer \cite{gross2}[Prop. 3.7]. The ideals $|\Phi_v|_K$ are equal to either $(1)$, $(2)$ or $\fp$ with $\fp^2=(2)$ or $\fp^2=(3)$ \cite{gross2}[Prop. 4.5].
\end{remark}

Restricting scalars from $K_\br$ to $\br$ in the period isomorphism (\ref{period}) and taking determinants over $\bz$ of the natural lattices in both sides, we find that there exists $\Omega(E)\in\br^\times$ such that
$$\bigotimes_{v\mid\infty}  \mydet_\bz H_1(E(F_v),\bz)=\Omega(E)\cdot \mydet_\bz \Hom_{\co_F}(H^0(\ce,\Omega_{\ce/\co_F}) ,\co_F).$$
Moreover, we have
\begin{equation} \Omega(E)\cdot\bz= N_{K/\bq}\fa =\fa\overline{\fa}=\Omega\overline{\Omega}\cdot\fa(\Omega)\overline{\fa(\Omega)}.\label{periodnorm}\end{equation}

\begin{corollary}  (BSD for $E/F$) Under the assumptions of Theorem \ref{main} the groups $E(F)$ and $\Sha(E/F)$ are finite and
$$ \frac{L(E/F,1)}{\Omega(E)}=\frac{|\Sha(E/F)|}{|E(F)|^2}\cdot \prod_v|\Phi_v|$$
\label{BSD1}\end{corollary}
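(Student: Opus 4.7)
The plan is to descend the equivariant formula of Theorem \ref{main} to a rational identity by taking the norm $N_{K/\bq}$ of both sides.

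First, since $E/F$ has complex multiplication by $\co_K$ the $L$-function factors as $L(E/F,s) = L(\psi,s) \cdot L(\overline{\psi},s)$, and at the real point $s=1$ one has $L(\psi,1) = \overline{L(\overline{\psi},1)}$, so that $L(E/F,1) = |L(\overline{\psi},1)|^2$. In particular the hypothesis $L(\overline{\psi},1)\neq 0$ is equivalent to $L(E/F,1)\neq 0$, and the finiteness of $E(F)$ and $\Sha(E/F)$ is already provided by Theorem \ref{main}.

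Next, I would apply $N_{K/\bq}$ to the equality of fractional $\co_K$-ideals supplied by Theorem \ref{main}. On the left, $N_{K/\bq}(L(\overline{\psi},1)/\Omega) = L(\overline{\psi},1)L(\psi,1)/(\Omega\overline{\Omega}) = L(E/F,1)/(\Omega\overline{\Omega})$. On the right, the structure theorem for finite modules over the Dedekind domain $\co_K$ gives $N_{K/\bq}(|M|_K) = |M|\cdot\bz$ for any finite $\co_K$-module $M$; applying this to $M = \Sha(E/F)$ and to each $M = \Phi_v$, together with $N_{K/\bq}(n\,\co_K) = n^2\bz$ for the rational integer $n = |E(F)|$ and $N_{K/\bq}(\fa(\Omega)) = \fa(\Omega)\overline{\fa(\Omega)}$, yields
$$\left(\frac{L(E/F,1)}{\Omega\overline{\Omega}}\right)_\bz = \frac{|\Sha(E/F)|}{|E(F)|^2}\cdot\prod_v |\Phi_v|\cdot \fa(\Omega)\overline{\fa(\Omega)}.$$

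Finally, dividing by identity (\ref{periodnorm}), which reads $\Omega(E)\cdot\bz = \Omega\overline{\Omega}\cdot\fa(\Omega)\overline{\fa(\Omega)}$, produces the desired formula as an equality of fractional $\bz$-ideals, and hence of positive rationals since $L(E/F,1) = |L(\overline{\psi},1)|^2 > 0$. There is no serious obstacle: the corollary is a bookkeeping exercise about norms of $\co_K$-ideals once the equivariant refinement is in hand, the only mild subtlety being the sign ambiguity of $\Omega(E)$, which is absorbed by interpreting the final identity at the level of $\bz$-ideals.
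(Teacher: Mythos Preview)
Your proof is correct and follows essentially the same approach as the paper: invoke the factorization $L(E/F,s)=L(\psi,s)L(\overline{\psi},s)$, take $N_{K/\bq}$ of the ideal identity in Theorem~\ref{main} using $N_{K/\bq}(|M|_K)=|M|$ for finite $\co_K$-modules $M$, and conclude via (\ref{periodnorm}). Your write-up is simply a fuller version of the paper's three-line sketch, with the added remark on positivity resolving the sign of $\Omega(E)$.
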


\begin{proof} This follows from the identity \cite{shimurabook}[Thm. 7.42]
$$ L(E/F,s)=L(\overline{\psi},s)\overline{L(\overline{\psi},s)}=L(\overline{\psi},s)L(\psi,s)$$
the fact that $N_{K/\bq}(|A|_K)=|A|$ for any finite $\co_K$-module $A$, and (\ref{periodnorm}).
\end{proof}
\noindent Any elliptic curve $E/K$ with CM by $\co_K$ for which $L(E/K,1)\neq 0$ satisfies the assumptions of Theorem \ref{main} and Corollary \ref{BSD1}.  In this case the class number of $K$ is $1$. More generally, for primes $q\equiv 3 \mod{4}$ and $K=\bq(\sqrt{-q})$ the class number of $K$ is odd and elliptic curves  $E/H$ where $F=H$ is the Hilbert class field have been much studied in, for example \cite{gross1}, \cite{roh80}, \cite{montgomery-rohrlich}, \cite{bugross85}. One finds in these references many examples which satisfy the assumption $L(E/H,1)\neq 0$ of Theorem \ref{main} (see also Corollary \ref{grosscurve}).
\begin{remark} \label{periodremark}For elements $\omega\in H^0(\ce,\Omega_{\ce/\co_F})$ and $\gamma_v\in H_1(E(F_v),\bz)$ we may define periods 
$$ \Omega_v:=\Omega(\gamma_v,\omega):=\int_{\gamma_v}\omega$$ 
in terms of which $\Omega$ and $\Omega(E)$ can be expressed as follows. First, there are
fractional $\co_K$-ideals $\fa(\gamma_v)$ and $\fa(\omega)$ such that
$$ H_1(E(F_v),\bz)= \fa(\gamma_v)\cdot\gamma_v$$
and 
$$ \mydet_{\co_K}H^0(\ce,\Omega_{\ce/\co_F})=\fa(\omega)\cdot\mydet_{\co_K}(\co_F\cdot\omega).$$
The trace map $\co_F\xrightarrow{\mathrm{Tr}}\co_K$ induces an isomorphism
$$\Hom_{\co_F}(H^0(\ce,\Omega_{\ce/\co_F}) ,\co_F)
\cong\Hom_{\co_K}(H^0(\ce,\Omega_{\ce/\co_F})\otimes_{\co_F}\cd^{-1}_{F/K},\co_K)$$
where $\cd^{-1}_{F/K}$ is the inverse different, an invertible $\co_F$-module. Assume for simplicity that $\cd^{-1}_{F/K}$ is a free $\co_K$-module and let $\beta_1,\dots,\beta_d$ be a basis. 
Then if we define
$$ \Omega:=\mydet\left(\int_{\gamma_v}\omega\otimes\beta_k\right)_{v,k}$$
we have
$$\fa(\Omega)=\fa(\omega)\cdot\prod_{v\mid\infty}\fa(\gamma_v).$$
Let $e_v$ be the indecomposable idempotents in
$$ \mathcal D^{-1}_{F/K}\otimes_{\co_K}K_\br\cong\co_F\otimes_{\co_K}K_\br\cong\prod_{v\mid\infty}F_v$$
and express the $\beta_k$ as a linear combination of the $e_v$. Then the base change matrix has determinant 
$$\mydet(\beta_{k,v})_{v,k}=\left(\sqrt{D_{F/K}}\right)^{-1}$$
where $D_{F/K}\in\co_K$ generates the relative discriminant ideal of the extension $F/K$ (and depends on the choice of $\beta_k$ by a factor in $(\co_K^\times)^2$ so that the $\co_K$-ideal generated by $\sqrt{D_{F/K}}$ is well defined). So we find
$$ \Omega=\left(\sqrt{D_{F/K}}\right)^{-1}\cdot \mydet\left(\int_{\gamma_v}\omega\otimes e_{v'}\right)_{v,v'}= 
 \left(\sqrt{D_{F/K}}\right)^{-1}\cdot\prod_{v\mid\infty}\Omega_v.$$
Denoting by $D_{L/\bq}\in\bz$ the discriminant of a number field $L$ we have
$$ |D_{F/\bq}|=N_{K/\bq}D_{F/K}\cdot |D_{K/\bq}|^{[F:K]}=D_{F/K}\overline{D_{F/K}}\cdot |D_{K/\bq}|^{[F:K]}$$
and therefore
\begin{align*}\bz\cdot\Omega(E)= &\fa(\omega)\overline{\fa(\omega)}\cdot\left(\sqrt{N_{K/\bq}D_{F/K}}\right)^{-1}\cdot \prod_{v\mid \infty}\Omega_v\overline{\Omega}_v\cdot \fa(\gamma_v)\overline{\fa(\gamma_v)}\\
= &\fa(\omega)\overline{\fa(\omega)}\cdot\left(\sqrt{|D_{F/\bq}|}\right)^{-1}\cdot \prod_{v\mid \infty}\sqrt{|D_{K/\bq}|}\cdot\Omega_v\overline{\Omega}_v\cdot \fa(\gamma_v)\overline{\fa(\gamma_v)}\\
= &\frac{1}{I_\omega}\cdot\left(\sqrt{|D_{F/\bq}|}\right)^{-1}\cdot \prod_{v\mid \infty}\mathrm{vol}_\omega(E(F_v))
\end{align*}
where 
$$ I_\omega:=(\fa(\omega)\overline{\fa(\omega)})^{-1}=[H^0(\ce,\Omega_{\ce/\co_F}):\co_F\cdot\omega]\in\bz$$
and the Haar measure on $E(F_v)$ is induced by the volume form $2dx\wedge dy=idz\wedge d\bar{z}$ after identifying the cotangent space of $E/F_v$ with  $F_v\simeq\bc$ via the basis $\omega$. This last form of the period term $\Omega(E)$ in the conjecture of Birch and Swinnerton-Dyer for abelian varieties over number fields can be found, for example, in \cite{conrad15}[L3, Ex. 6.4] (see also \cite{Flach-Siebel}[Lemma 18]) and continues to hold without our assumption of the existence of a basis $\beta_i$. However, in the above computation there will then be yet another corrective fractional $\co_K$-ideal $\fa(\beta_i)$ contributing to $\fa(\Omega)$.
\end{remark}

\begin{corollary}  (BSD for $E/F^+$) Under the assumptions of Theorem \ref{main} assume in addition that $E$ is defined over a subfield $F^+\subset F$ which is the fixed field of an involution of $F$ inducing complex conjugation on $K$. Then the groups $E(F^+)$ and $\Sha(E/F^+)$ are finite and
$$ \frac{L(E/F^+,1)}{\Omega(E^+)}=\frac{|\Sha(E/F^+)|}{|E(F^+)|^2}\cdot \prod_v|\Phi^+_v|$$
where $\Phi_v^+$ is the component group of the N\'eron model of $E/F^+$ at the prime $v$.
\label{BSD2}\end{corollary}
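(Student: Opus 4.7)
The plan is to derive Corollary \ref{BSD2} from Corollary \ref{BSD1} by descent along the quadratic extension $F/F^+$, using Weil restriction together with an isogeny between $E$ and its quadratic twist.

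Since $E$ is defined over $F^+$ and acquires CM by $\co_K$ only after base change to $F=F^+\cdot K$, the Hecke character $\psi$ of $E/F$ satisfies $\overline\psi=\psi\circ\sigma$ on $\ba_F^\times$, where $\sigma$ generates $\Gal(F/F^+)$, and the two-dimensional $G_{F^+}$-representation on $V_\ell E$ is the induction $\Ind_{G_F}^{G_{F^+}}\psi$. Consequently $L(E/F^+,s)=L(\psi,s)=L(\overline\psi,s)$ (the last equality obtained by re-indexing primes under $\sigma$), and combining with Shimura's factorization cited above yields
$$L(E/F,s)=L(E/F^+,s)^2.$$
In particular $L(E/F^+,1)\neq 0$, and Corollary \ref{BSD1} implies that $E(F^+)\subseteq E(F)$ is finite and, via inflation--restriction applied to the Kummer sequence, that $\Sha(E/F^+)$ is finite as well.

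Let $E^\chi/F^+$ be the quadratic twist of $E$ by the character $\chi$ of $F/F^+$. Since $\chi|_{G_F}$ is trivial, the projection formula gives
$$V_\ell E^\chi=V_\ell E\otimes\chi=\Ind_{G_F}^{G_{F^+}}(\psi\otimes\chi|_{G_F})=\Ind_{G_F}^{G_{F^+}}\psi=V_\ell E$$
as $G_{F^+}$-representations, so by Faltings' isogeny theorem $E\sim_{F^+}E^\chi$. Combined with the standard $F^+$-isogeny $E\times_{F^+}E^\chi\to \Res_{F/F^+}(E)$ coming from the two canonical embeddings, this yields an $F^+$-isogeny $E^2\sim_{F^+}\Res_{F/F^+}(E)$. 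Now BSD is invariant under isogeny (Cassels, Tate); BSD for $\Res_{F/F^+}(A)/F^+$ is equivalent to BSD for $A/F$ under the classical identifications of L-functions, Mordell--Weil groups, Tate--Shafarevich groups, Tamagawa factors, and periods; and each BSD invariant is multiplicative on products, so BSD for $E^2/F^+$ is equivalent to BSD for $E/F^+$ (positivity allowing the square-root extraction). Chaining these equivalences, Corollary \ref{BSD1} applied to $E/F$ delivers the desired equality for $E/F^+$.

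The main obstacle is the detailed verification of all invariant matchings throughout the descent chain
$$E/F \;\longleftrightarrow\; \Res_{F/F^+}(E)/F^+ \;\longleftrightarrow\; (E\times E^\chi)/F^+ \;\longleftrightarrow\; E^2/F^+ \;\longleftrightarrow\; E/F^+,$$
in particular the $2$-primary contributions (Faltings' isogeny $E\sim_{F^+}E^\chi$ may have $2$-power degree, and the product isogeny to $\Res_{F/F^+}(E)$ has kernel supported on $E[2]$) and the Tamagawa factors at primes where $F/F^+$ ramifies. These technicalities are handled by the classical theory of BSD invariance under isogeny and by the standard identifications for Weil restriction of Néron models.
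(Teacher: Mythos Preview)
Your proposal is correct and follows essentially the same route as the paper: pass from $E/F$ to $\Res_{F/F^+}(E)/F^+$, use an $F^+$-isogeny $\Res_{F/F^+}(E)\sim E\times E^\chi\sim E\times E$, invoke isogeny invariance of BSD, and extract a square root. The paper cites Milne \cite{milne72}[Thm.~1, Thm.~3, Cor.~to Thm.~3] for each of these steps.

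The one noteworthy difference is your appeal to Faltings' isogeny theorem to obtain $E\sim_{F^+}E^\chi$. This is correct but unnecessarily heavy: Milne's 1972 paper (predating Faltings by a decade) already establishes this isogeny directly from the CM structure, and that is what the paper cites. Your $V_\ell$-computation via the projection formula is a clean way to see the representations agree, but the isogeny itself can be written down by hand. Also, your detour through the Hecke-character identity $L(\psi,s)=L(\bar\psi,s)$ is not needed: once $E\sim_{F^+}E^\chi$, the factorization $L(E/F,s)=L(E/F^+,s)\,L(E^\chi/F^+,s)=L(E/F^+,s)^2$ is immediate.
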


\begin{proof} If $F/F^+$ is a quadratic extension of number fields and $E/F^+$ an elliptic curve then there is an isogeny of abelian surfaces over $F^+$
$$ A:=Res_{F^+}^F(E/F)\sim E\times E_\epsilon$$
where $E_\epsilon$ is the twist of $E$ by the quadratic character $\epsilon$ attached to $F/F^+$. In our case $E_\epsilon$ is isogenous to $E$ (see \cite{milne72}[Thm. 3]), hence an isogeny
$$ A \sim E\times E.$$
We have isomorphisms $\Sha(E/F)\simeq \Sha(A/F^+)$, $E(F)\simeq A(F^+)$ and the BSD formula for $E/F$ is equivalent to the BSD formula for $A/F^+$ \cite{milne72}[Thm. 1]. By isogeny invariance of BSD we deduce the BSD formula for $(E\times E)/F^+$ (as well as finiteness of $\Sha((E\times E)/F^+)$ and $(E\times E)(F^+)$). Since all terms in the BSD formula for $(E\times E)/F^+$ are simply the squares of the corresponding terms in the BSD formula for $E/F^+$ we deduce the BSD formula for $E/F^+$ by taking square roots (see also \cite{milne72}[Cor. to Thm. 3] for this entire argument).
\end{proof}
\noindent Any CM elliptic curve $E/\bq$ with $L(E/\bq,1)\neq 0$ satisfies the assumptions of Corollary \ref{BSD2}. In particular we obtain the following. 
\begin{corollary}
The Birch and Swinnerton-Dyer conjecture is true for congruent number elliptic curves $E^{(n)}:ny^{2}=x^{3}-x$ for a density one subset of positive square-free integers $n\equiv 1,2,3 \mod{8}$.
\end{corollary}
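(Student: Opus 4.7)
The plan is to apply Corollary~\ref{BSD2} to $E^{(n)}/\bq$. First I would observe that $E^{(n)}: ny^2 = x^3 - x$ is the quadratic twist by $n$ of $E^{(1)}: y^2 = x^3 - x$, which has complex multiplication by $\co_K = \bz[i]$ with $K = \bq(i)$. Since quadratic twisting preserves the geometric endomorphism ring, $E^{(n)}/\bq$ inherits CM by $\co_K$; the torsion field $K(E^{(n)}_{\mathrm{tors}})$ is abelian over $K$ by classical CM theory, and the non-trivial involution of $K/\bq$ induces complex conjugation on $K$. Taking $F^+ = \bq$ and $F = K$ in Corollary~\ref{BSD2}, the full BSD formula for $E^{(n)}/\bq$ holds whenever $L(E^{(n)}/\bq, 1) \neq 0$.

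It thus remains to produce a density-one subset of positive square-free $n \equiv 1,2,3 \pmod{8}$ for which $L(E^{(n)}, 1) \neq 0$. For such $n$ the sign of the functional equation of $L(E^{(n)}, s)$ equals $+1$, so the order of vanishing at $s = 1$ is even. The required density-one non-vanishing is a theorem of A.~Smith on the distribution of $2^\infty$-Selmer groups in quadratic twist families of CM elliptic curves with full rational $2$-torsion, combined with either Waldspurger's formula (expressing $L(E^{(n)}, 1)$ in terms of the square of a Fourier coefficient of a weight $3/2$ modular form) or the converse to Coates--Wiles available for CM elliptic curves from the Iwasawa main conjecture (Rubin, and the two-variable refinement of Johnson-Leung--Kings used elsewhere in this paper), which upgrades the vanishing of the $2^\infty$-Selmer group to the non-vanishing of the complex $L$-value at $s=1$.

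The main obstacle lies entirely in this density-one non-vanishing of $L(E^{(n)}, 1)$, which rests on Smith's delicate analysis of $2^\infty$-Selmer groups in quadratic twist families. The CM identification and the application of Corollary~\ref{BSD2} are routine.
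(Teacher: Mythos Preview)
Your reduction to Corollary~\ref{BSD2} is correct and matches the paper exactly. The paper's own proof is a single sentence citing \cite{bt} (Burungale--Tian) for the density-one non-vanishing of $L(E^{(n)}/\bq,1)$ among square-free $n\equiv 1,2,3\pmod 8$.

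Your unpacking of this step into ``Smith's $2^\infty$-Selmer distribution plus a converse theorem'' is the right architecture, and is essentially what \cite{bt} carries out. Two points, however, need sharpening. First, the Waldspurger route you offer does not close the argument: Waldspurger's formula relates $L(E^{(n)},1)$ to the square of a Fourier coefficient of a half-integral-weight form, whereas Smith's theorem controls $2^\infty$-Selmer groups, not those Fourier coefficients, so there is no direct passage from the Selmer statement to $L\neq 0$ along that route. Second, the converse to Coates--Wiles must be applied at $p=2$, since that is where Smith's input lives; Rubin's original converse excludes primes dividing $|\co_K^\times|$, hence $p=2$ for $K=\bq(i)$. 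Deducing the rank-zero $p$-converse at $p=2$ from the Johnson-Leung--Kings main conjecture is exactly the contribution of \cite{bt}; it is not a routine descent and should be cited as a black box rather than gestured at. With \cite{bt} in place of your two alternatives, your argument coincides with the paper's.
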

\begin{proof}
By \cite{bt} we have $L(E^{(n)}/\bq,1)\neq 0$ for a density one subset of positive square-free integers $n\equiv 1,2,3 \mod{8}$.
\end{proof}
\begin{remark} 
Let $E/\bq$ be a CM elliptic curve and $\{E^{(n)}\}_{n}$ the  family of its quadratic twists over $\bq$. Then Corollary \ref{BSD2} in combination with \cite{rs15}[Thm. 3] implies that the distribution of orders of Tate-Shafarevich groups $\{\Sha(E^{(n)}/\bq)\}_{n}$ in the quadratic twist subfamily with analytic rank zero is as in \cite{rs15}[Thm. 3]. 
\end{remark}
\noindent We learnt the following application from B. Gross. 
\begin{corollary}
Let $p\equiv 7 \mod{8}$ be a prime, $K=\bq(\sqrt{-p})$ and $h$ the class number of $K$. Let $F=K(j)$ denote the Hilbert class field of $K$ for $j=j((1+\sqrt{-p})/2)$ and $F^{+}=\bq(j)$. Let $A(p)/F^{+}$ be the elliptic curve with CM by $\co_K$ over $F$ with Weierstrass equation 
$$
y^{2}=x^{3}+\frac{mp}{2^{3}3}x-\frac{np^{2}}{2^{5}3^{3}}
$$
where $m$ and $n$ are unique real numbers such that 
$$m^{3}=j,\\ -n^{2}p=j-1728\\ \text{ and }{\rm sgn}(n)=(\frac{2}{p}).$$
Then 
$$
|\Sha(A(p)/F^{+})|=
\bigg{(}\frac{1}{2^{h-1}}\cdot \frac{\prod_{\varphi\in\hat{{\rm Cl}}_{K}}\sum_{C \in {\rm Cl}_{K}}\varphi(C)t(C)}{\prod_{C\in {\rm Cl}_{K}}t(C)}\bigg{)}^{2}
$$
where $\hat{{\rm Cl}}_{K}$ denotes the character group of ${\rm Cl}_K$ and $t$ the modular function as in \cite{rv}[p.~562].
\label{grosscurve}\end{corollary}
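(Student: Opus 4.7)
The plan is to apply Corollary~\ref{BSD2} to the Gross curve $A(p)/F^+$ and to match each factor of the resulting BSD formula with the data on the right hand side, using classical CM computations of Gross \cite{gross1} and Rodriguez-Villegas \cite{rv}. First I would check the hypotheses: the curve $A(p)/F^+$ acquires CM by $\co_K$ over $F=F^+\cdot K$, and $F/F^+$ is a quadratic extension whose non-trivial automorphism restricts to complex conjugation on $K$. Since $F$ is the Hilbert class field of $K$, the main theorem of complex multiplication yields that $F(A(p)_{\mathrm{tors}})/K$ is abelian. The required non-vanishing $L(A(p)/F,1)\neq 0$ for $p\equiv 7\pmod 8$ is classical, going back to Montgomery--Rohrlich and to the explicit formula of \cite{rv} itself.

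Corollary~\ref{BSD2} then provides
$$\frac{L(A(p)/F^+,1)}{\Omega(A(p)^+)}=\frac{|\Sha(A(p)/F^+)|}{|A(p)(F^+)|^2}\cdot\prod_v|\Phi_v^+|.$$
Via the Hecke character $\psi$ of $F$ associated to $A(p)/F$ one factors
$$L(A(p)/F,s)=\prod_{\varphi\in\hat{\Cl}_K}L(\psi\varphi,s),$$
where $\varphi$ runs over characters of $\Gal(F/K)\cong\Cl_K$. Since the quadratic twist of $A(p)/F^+$ attached to $F/F^+$ is isogenous to $A(p)/F^+$ (as in the proof of Corollary~\ref{BSD2}), we also have $L(A(p)/F,s)=L(A(p)/F^+,s)^2$. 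Kronecker's second limit formula, in the form of \cite{rv}, evaluates each $L(\psi\varphi,1)$ as a common transcendental period factor $\Omega_0$ times the algebraic quantity $\sum_{C\in\Cl_K}\varphi(C)t(C)$. On the other side, Remark~\ref{periodremark} decomposes $\Omega(A(p)^+)$ as a product over the archimedean places of $F^+$, each factor evaluating via the CM uniformisation by a lattice indexed by a class $C\in\Cl_K$ in terms of $t(C)$ and of the same $\Omega_0$. The transcendental factors cancel, leaving $\prod_C t(C)$ in the denominator of the claim.

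For the Gross curves $A(p)/F^+$ with $p\equiv 7\pmod 8$ it is known that $A(p)(F^+)$ is trivial, and the only primes of bad reduction lie above $2$, $3$ and $p$. A short local analysis as in \cite{gross1} evaluates $\prod_v|\Phi_v^+|$ as the power of $2$ that, after the squaring forced by $L(A(p)/F,s)=L(A(p)/F^+,s)^2$, yields the factor $2^{h-1}$ in the denominator. Combining these ingredients and taking square roots produces the asserted identity for $|\Sha(A(p)/F^+)|$.

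The main obstacle is not the BSD identity itself, which is granted by Corollary~\ref{BSD2}, but the careful bookkeeping of normalisations: aligning our period $\Omega(A(p)^+)$ with that of \cite{gross1}, matching the modular units $t(C)$ of \cite{rv} with the output of Kronecker's second limit formula, and the explicit local computation of the component groups $\Phi_v^+$. These ingredients are essentially present in \cite{gross1} and \cite{rv}; the role of Corollary~\ref{BSD2} here is to upgrade the conjectural BSD identity used there to an unconditional theorem, at which point the formula for $|\Sha(A(p)/F^+)|$ follows.
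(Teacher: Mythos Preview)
Your approach is essentially that of the paper: verify the hypotheses of Corollary~\ref{BSD2} (non-vanishing from Rohrlich \cite{roh80}) and then identify the resulting BSD quantity with the explicit formula of Rodr\'{\i}guez~Villegas. The paper's proof, however, is two lines: it simply invokes \cite{rv}[Thm.~8.2], which has already carried out the entire computation of the analytic order of $\Sha$ --- periods, torsion, Tamagawa factors, and the identification with the $t(C)$ expression --- and states the displayed formula as a BSD prediction. Corollary~\ref{BSD2} then turns that prediction into a theorem.

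Your expanded sketch of what \cite{rv}[Thm.~8.2] contains is therefore unnecessary here, and it introduces some imprecisions. The attribution of the factor $2^{h-1}$ entirely to the product $\prod_v|\Phi_v^+|$, together with the blanket assertion that $A(p)(F^+)$ is trivial, is not how the bookkeeping in \cite{rv} actually runs; the power of $2$ there emerges from period normalisations and the structure of the square-root formula rather than from a direct local Tamagawa computation. Your final paragraph already identifies the clean route: cite \cite{roh80} for $L(A(p)/F^+,1)\neq 0$, cite \cite{rv}[Thm.~8.2] for the analytic-$\Sha$ formula, and apply Corollary~\ref{BSD2}. That is exactly the paper's proof.
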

\begin{proof}
By \cite{roh80} we have $L(A(p)/F^{+},1)\neq 0$. So the assertion follows from Corollary \ref{BSD2} and \cite{rv}[Thm. 8.2].
\end{proof}
\begin{corollary} Let $E/F'$ be an elliptic curve as in either Corollary \ref{BSD1} or Corollary \ref{BSD2} so that $F'$ is either $F$ or $F^+$. Let $X/F'$ be a principal homogeneous space of $E/F'$ and $\X\to\Spec(\co_{F'})$ a proper regular model of $X$. Then $\Br(\X)$ is finite and the special value conjecture  \cite{Flach-Morin-16}[Conj. 5.12] for $\zeta(\X,s)$ at $s=1$ holds true. More precisely, if the Zeta function $\zeta(\X,s)$ is factored as in \cite{Flach-Siebel}[Eq. (4)] 
$$ \zeta(\X,s)= \frac{\zeta_{F'}(s)\zeta_{F'}(s-1)}{\zeta(H^1,s)}$$
then
\begin{equation} \ord_{s=1}\zeta(H^1,s)=\rank_\bz\Pic^0(\X)\notag\end{equation}
and
\begin{equation} \zeta^*(H^1,1)=\frac{\#\Br(\overline{\X})\cdot\delta^2\cdot\Omega(\X)\cdot R(\X)}{(\#(\Pic^0(\X)_{tor}/\Pic(\co_{F'})))^2}\cdot\prod\limits_{\text{$v$ real}}\frac{\#\Phi_v}{\delta^2_v}\notag\end{equation}
where $\Pic^0(\X)$ is the kernel of the degree map on $\Pic(\X)$, $R(\X)$ is the regulator of the Arakelov intersection pairing on $\Pic^0(\X)$, $\Omega(\X)$ is the determinant of the period isomorphism between the finitely generated abelian groups $H^1(\X(\bc),2\pi i\cdot\bz)^{G_\br}$ and $H^1(\X,\co_\X)$ and
$$\Br(\overline{\X})=\ker\left(\Br(\X)\to\bigoplus_{\text{$v$ real}}\Br(X_{F'_v})\right).$$   The integer $\delta$ is the index of $X$, i.e. the g.c.d. of the degrees of all closed points, $\Phi_v=E(F'_v)/E(F'_v)^0$ is the group of components, and $\delta_v$ is the index of $X_{F'_v}$ over $F'_v$. 
\label{zeta}\end{corollary}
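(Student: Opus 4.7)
The plan is to invoke the equivalence between the classical Birch--Swinnerton--Dyer formula for $E/F'$ and the special value conjecture of \cite{Flach-Morin-16} for $\zeta(\X,s)$ at $s=1$, which is essentially the content of \cite{Flach-Siebel}. Once this equivalence is in hand, the desired formula follows from Corollary \ref{BSD1} (if $F'=F$) or Corollary \ref{BSD2} (if $F'=F^+$), which supply the classical BSD formula for the CM elliptic curve $E/F'$, together with the finiteness of $\Sha(E/F')$ proved in Theorem \ref{main}.

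First I would match the analytic sides. Since $X$ is a principal homogeneous space of $E$ over $F'$, the $\ell$-adic cohomology $H^1_{et}(X_{\overline{F'}},\bq_\ell)$ is canonically isomorphic to $H^1_{et}(E_{\overline{F'}},\bq_\ell)$ as a Galois representation, so $\zeta(H^1,s)$ agrees with $L(E/F',s)$ up to Euler factors at finitely many primes of bad reduction, which are then accounted for by the component group contributions $\#\Phi_v$ at those places. In particular $\ord_{s=1}\zeta(H^1,s)=0$, and this matches $\rank_\bz\Pic^0(\X)=0$ since $E(F')$ is finite by the established BSD and $\Pic^0(\X)$ differs from $E(F')$ only by a torsion piece coming from $\Pic(\co_{F'})$ and from vertical components of singular fibers of $\X\to\Spec(\co_{F'})$.

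Next I would translate the classical terms $|\Sha(E/F')|$, $|E(F')|$, $\prod_v|\Phi_v|$ and $\Omega(E)$ appearing in our BSD formula into the arithmetic terms $\#\Br(\overline{\X})$, $\#(\Pic^0(\X)_{tor}/\Pic(\co_{F'}))$, $\prod_v\#\Phi_v/\delta_v^2$, $\Omega(\X)$ and $R(\X)$ of the Flach--Morin formula, with the overall index correction $\delta^2$. The key identifications are: finiteness of $\Br(\X)$ follows from finiteness of $\Sha(E/F')$ via the Grothendieck sequence relating $\Br(\X)$, $\Br(X)$ and local data, with $\#\Br(\overline{\X})/\delta^2$ matching $\#\Sha(E/F')$; the Arakelov regulator $R(\X)$ in rank zero combines with $\Omega(\X)$ to recover the elliptic period $\Omega(E)$ up to an explicit discriminant factor; and the component groups $\Phi_v$ at non-archimedean places transfer verbatim.

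The main obstacle is the careful bookkeeping at the archimedean places and the precise identification of $\Omega(\X)\cdot R(\X)$ with $\Omega(E)$. This requires computing the determinant of the period map between the finitely generated groups $H^1(\X(\bc),2\pi i\cdot\bz)^{G_\br}$ and $H^1(\X,\co_\X)$, which goes through Grothendieck--Serre duality for $\X\to\Spec(\co_{F'})$ and the Hodge decomposition of $H^1$ of an elliptic curve, together with a careful treatment of the index $\delta$ and the local indices $\delta_v$ at real places accounting for the possible lack of rational points on $X$. All these reductions are carried out explicitly in \cite{Flach-Siebel}, so the proof of the corollary amounts to citing that equivalence together with our Corollaries \ref{BSD1} and \ref{BSD2}.
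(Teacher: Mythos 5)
Your overall strategy is the same as the paper's: cite the equivalence between the BSD formula for $E/F'$ and the special value conjecture for $\zeta^*(H^1,1)$ established in \cite{Flach-Siebel}[Thm.~6.1], then apply Corollaries \ref{BSD1} and \ref{BSD2}. However, you miss the one specific technical point the paper has to address, and you spend most of your effort re-deriving things that \cite{Flach-Siebel} already proves.

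The gap: the statement of Corollary \ref{zeta} writes the archimedean local factor as $\prod_{v\text{ real}}\#\Phi_v/\delta_v^2$ where $\delta_v$ is the \emph{index} of $X_{F'_v}$, whereas \cite{Flach-Siebel}[Eq.~(6)] has $\delta'_v$, the \emph{period} of $X_{F'_v}$. In general period and index of a torsor can differ, so the two formulas do not obviously match. The paper observes that since $E$ has genus $1$, the proof of \cite{Flach-Siebel}[Lemma~9] gives $\delta'_v=\delta_v$ for real $v$, which is exactly what is needed to rewrite the formula in the stated form. You gesture at "a careful treatment of the index $\delta$ and the local indices $\delta_v$ at real places" but do not identify that the issue is the period-vs-index discrepancy nor invoke the genus-$1$ hypothesis that resolves it. Conversely, the long discussion of matching $\Br(\overline{\X})$ with $\Sha$, of the Arakelov regulator, of $\Omega(\X)$ via Grothendieck–Serre duality, etc., is all subsumed in the citation of \cite{Flach-Siebel}[Thm.~6.1] and need not be reargued here.
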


\begin{proof} By \cite{Flach-Siebel}[Thm. 6.1] the BSD formula for $E/F'$ is equivalent to the special value conjecture \cite{Flach-Siebel}[Eq. (6)] for $\zeta^*(H^1,1)$. Since $E$ has genus $1$ the equality $\delta_v'=\delta_v$ of period and index for real $v$ in \cite{Flach-Siebel}[Eq. (6)] follows from the proof of \cite{Flach-Siebel}[Lemma 9].
\end{proof}

\begin{remark} In the situation of Corollary \ref{BSD2} the extension $F/\bq$ is Galois since $\Aut(F)$ contains the involution $\sigma$ in addition to $\Gal(F/K)$ so that $\#\Aut(F)=[F:\bq]$. The number $r_1$ of real places of $F^+$ is the number of fixed points of the action of $\sigma$ on the set of archimedean places of $F$. If one chooses $\sigma\in\Aut(F)$ as the restriction of complex conjugation with respect to a particular complex embedding there is at least one fixed point, and the total number of fixed points of $\sigma$ coincides with the number of fixed points of the conjugation action of $\Gal(K/\bq)$ on $\Gal(F/K)$. Hence the signature $(r_1,r_2)$ of the field $F^+$ in Corollary \ref{BSD2} either satisfies $r_1=0$ or $r_1\mid 2r_2$. One can construct examples of fields $F^+$ for any $(r_1,r_2)$ with $r_1\mid 2r_2$: choose $$\Gal(F/K)\simeq \bz/r'_1\bz\times \bz/((r_1+2r_2)/r'_1)\bz$$ with $\Gal(K/\bq)$ acting trivially on the first factor and by $-1$ on the second. Here $r_1'=r_1$ if $r_1$ is odd and $r_1'=r_1/2$ if $r_1$ is even. To construct examples of Corollaries \ref{BSD2} and \ref{zeta} one may take $E/F$ with $j(E)\in\bq$ but one also needs non-vanishing results for $L(\overline{\psi},1)$. 
\label{r1}\end{remark}

\begin{theorem} Let $A/K$ be a CM abelian variety with $\End_K(A)\simeq \co_L$ for some CM field $L$ with $[L:\bq]=2\dim(A)$. Let $\varphi$ be its Serre-Tate character and assume
$$L(\bar{\varphi},1)\neq 0.$$ Let $\Omega\in L_\br^\times$ and $\fa(\Omega)$ be the period and fractional $\co_L$-ideal defined in Def. \ref{perioddef} in section \ref{preliminary}. 
Then $\Sha(A/K)$ and $A(K)$ are finite, $\frac{L(\bar{\varphi},1)}{\Omega}\in L^\times$ and
$$ \frac{L(\bar{\varphi},1)}{\Omega}=\frac{|\Sha(A/K)|_{L}}{|A(K)|_{L} |{^t}\! A(K)|_{L}}\cdot \prod_v|\Phi_v|_{L}\cdot\fa(\Omega)$$
in the group of fractional ideals of $L$. 
\label{main2}\end{theorem}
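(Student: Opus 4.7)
The plan is to transport the proof of Theorem \ref{main} verbatim to the $L$-equivariant setting over $K$, with $\co_L$-coefficients over $K$ playing the role of $\co_K$-coefficients over $F$. The key enabling observation is that for $A/K$ with CM by $\co_L$, the $\lambda$-adic Tate module $T_\lambda A$ is, for each prime $\lambda$ of $\co_L$, a free $\co_{L_\lambda}$-module of rank one on which $G_K$ acts through an $L_\lambda$-valued character associated to the Serre-Tate character $\bar\varphi$. Thus the underlying Galois representation has exactly the same one-dimensional CM shape as the representation attached to the Hecke character $\psi$ of $F$ in Theorem \ref{main} -- only now with coefficients in $L$ over $K$ in place of $K$ over $F$ -- and the equivariant Tamagawa number conjecture for the motive $h^1(A)(1)$ with $L$-coefficients takes the form of the claimed equality of fractional $\co_L$-ideals.

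First, I would transcribe the archimedean setup of the introduction: define the period $\Omega\in L_\br^\times$ and the fractional $\co_L$-ideal $\fa(\Omega)$ (as in Definition \ref{perioddef}) via the $L_\br$-determinant of the period isomorphism
\[
\prod_{v\mid\infty}H_1(A(K_v),\bq)_\br\cong\Hom_K(H^0(A,\Omega_{A/K}),K)_\br,
\]
using the Betti lattices $H_1(A(K_v),\bz)$, which are invertible $\co_L$-modules, and the de Rham lattice $\mydet_{\co_L}\Hom_{\co_K}(H^0(\mathcal A,\Omega_{\mathcal A/\co_K}),\co_K)$ coming from the N\'eron model $\mathcal A/\co_K$. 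Next, I would apply the Iwasawa-theoretic machinery one rational prime $p$ at a time: for each $\lambda\mid p$ of $\co_L$, the two-variable Iwasawa main conjecture for $K$ of Johnson-Leung and Kings identifies the characteristic ideal of the relevant Iwasawa-Selmer module with the Katz-type $p$-adic $L$-function attached to $\bar\varphi_\lambda$, and Kato's reciprocity law relates its specialization at the trivial character to the algebraic part $L(\bar\varphi,1)/\Omega$ of the complex $L$-value. Descent to finite level through the $\bz_p^2$-tower then produces the $\lambda$-primary part of the theorem, and the global equality follows by multiplicativity over $\lambda$.

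The hard part, exactly as for Theorem \ref{main}, is the unrestricted descent from Iwasawa to finite level: one must compute the relevant Selmer Euler characteristics uniformly in $\lambda$, including primes above $2$ and $3$, primes where $\bar\varphi_\lambda$ is ramified, and primes where the $\bz_p^2$-extension of $K$ behaves irregularly. The uniform control of these terms -- subsuming the Tamagawa factors $\prod_v|\Phi_v|_L$, the Mordell-Weil contributions $|A(K)|_L$ and $|{^t}\!A(K)|_L$, and the Tate-Shafarevich factor $|\Sha(A/K)|_L$ -- is the technical heart of Theorem \ref{main}, and once in place it should yield Theorem \ref{main2} by the same arguments applied to the $L$-valued Hecke character $\bar\varphi$.
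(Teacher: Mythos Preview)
Your outline is correct and matches the paper's approach: verify the hypotheses of Prop.~\ref{keyabelian} for every prime $p$ by producing the zeta element $z=z_{p^\infty\ff}(\gamma)$ via descent from the Johnson-Leung--Kings main conjecture (Prop.~\ref{descent}), and check its rationality and archimedean period identity via Kato's reciprocity law (Prop.~\ref{reciprocity}). Two small points of orientation: in the paper's logical flow Theorem~\ref{main2} is actually the \emph{simpler} and more primitive statement (the $L_p$-representation is already rank one, so no exterior product over $K_p$ is needed), and Theorem~\ref{main} is deduced from it in \S\ref{sec:ell} by Shapiro, restriction of coefficients, and isogeny invariance---so ``transport Theorem~\ref{main} verbatim'' is slightly backwards. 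Also, the paper never passes through a Katz-type $p$-adic $L$-function; the main conjecture is phrased directly as an equality of $\Lambda_\co$-determinants with a basis given by the elliptic unit $z_{p^\infty\ff}$, and the link to $L(\bar\varphi,1)$ comes entirely from the reciprocity law---but this does not affect the argument. One ingredient you leave implicit and that the paper makes explicit is the identification of Kato's abstract motivic structure $(V_L(\varphi),S(\varphi),\mathrm{per}_\tau)$ of \S\ref{sec:hecke} with $(H^1(A(\bc),\bq),H^0(A,\Omega_{A/K}),\mathrm{per}_A)$, obtained by realizing $A$ as an isogeny factor of $\Res_{F/K}E$ for $F=K(\ff_\varphi)$; this is needed before Prop.~\ref{reciprocity} can be applied to $A$.
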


Apart from being a special case of the equivariant Tamagawa number conjecture \cite{bufl01} this $L$-equivariant Birch and Swinnerton-Dyer conjecture was also formulated by Buhler and Gross \cite{bugross85}[Conj. 12.3] in the special case where $A=\Res_{H/K}E$ for $E/H$ a CM elliptic curve over the Hilbert class field and $[H:K]$ odd. We will explicate the connection to \cite{bugross85}[Conj. 12.3] in Prop. \ref{bugr} in section \ref{sec:ab}.

\begin{corollary} (BSD for $A/K$) Under the assumptions of Thm. \ref{main2} we have
$$ \frac{L(A/K,1)}{\Omega(A)}=\frac{|\Sha(A/K)|}{|A(K)|\cdot  |{^t}\! A(K)|}\cdot \prod_v|\Phi_v| $$
where the period $\Omega(A)$ is defined for example in \cite{Flach-Siebel}[Lemma 18].  
\label{abBSD}\end{corollary}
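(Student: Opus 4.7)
The plan is to descend the $L$-equivariant formula of Theorem \ref{main2} to the rational BSD formula by applying the norm map $N_{L/\bq}$, in exact analogy with the deduction of Corollary \ref{BSD1} from Theorem \ref{main}.

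First I would establish the $L$-function identity
$$L(A/K,s) = \prod_{\tau : L \hookrightarrow \bc} L(\tau\circ\bar\varphi,s),$$
the product running over the $2\dim(A)$ complex embeddings of $L$; equivalently, viewing $L(\bar\varphi,1)$ as an element of $L_\br^\times$ through its embeddings, this reads $L(A/K,1) = N_{L/\bq}L(\bar\varphi,1)$. This generalizes the identity $L(E/F,s) = L(\bar\psi,s)L(\psi,s)$ of \cite{shimurabook}[Thm. 7.42] used in the proof of Corollary \ref{BSD1} and is standard for CM abelian varieties of Shimura--Taniyama type.

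Next I would establish the period identity $\Omega(A)\cdot\bz = N_{L/\bq}(\fa) = N_{L/\bq}(\Omega)\cdot N_{L/\bq}(\fa(\Omega))$, the higher-dimensional analog of (\ref{periodnorm}). Here $\fa = \Omega\cdot\fa(\Omega)\subset L_\br$ is the fractional $\co_L$-ideal from Def. \ref{perioddef}, and $\Omega(A)\in\br^\times$ is the rational period of \cite{Flach-Siebel}[Lemma 18] obtained by restricting scalars along $L_\br\to\br$ and taking determinants of the ambient $\bz$-lattices. The verification is a determinant-of-base-change calculation formally identical to the one carried out in Remark \ref{periodremark}, with $L$ in place of $K$ and the different $\cd_{L/\bq}$ playing the role of $\cd_{F/K}$.

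With these two identities in hand, the last step is to apply $N_{L/\bq}$ to the equality of fractional $\co_L$-ideals provided by Theorem \ref{main2} and invoke the elementary identity $N_{L/\bq}(|M|_L) = |M|$ for any finite $\co_L$-module $M$; applied to $\Sha(A/K)$, $A(K)$, ${^t}\! A(K)$, and each component group $\Phi_v$, this delivers the claimed equality of positive rational numbers. The one place where care is needed is the bookkeeping of discriminant and different factors in the period identity of the second step, but this is a direct imitation of the elliptic curve computation and introduces no genuinely new difficulty.
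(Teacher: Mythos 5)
Your proposal is correct and follows the same route as the paper, which simply says to take the norm $N_{L/\bq}$ of the identity in Theorem \ref{main2}; your elaboration of the three ingredients (the factorization $L(A/K,s)=N_{L/\bq}L(\bar\varphi,s)$, the period identity $\Omega(A)\cdot\bz=N_{L/\bq}(\fa)$, and $N_{L/\bq}(|M|_L)=|M|$) is exactly what is implicit in that one-line argument.
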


\begin{proof} This follows by taking the norm from $L$ to $\bq$ of the identity in Thm. \ref{main2}.
\end{proof}

\begin{corollary} (BSD for $A/\bq$) In the situation of Thm. \ref{main2} assume in addition that $A$ is defined over $\bq$. Then we have
$$ \frac{L(A/\bq,1)}{\Omega(A^+)}=\frac{|\Sha(A/\bq)|}{|A(\bq)|\cdot  |{^t}\! A(\bq)|}\cdot \prod_v|\Phi^+_v| $$
where the period $\Omega(A^+)$ is the period of \cite{Flach-Siebel}[Lemma 18] for $A/\bq$.  
\label{abelianBSD2}\end{corollary}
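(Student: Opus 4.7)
The plan is to parallel the descent argument of Corollary \ref{BSD2} and descend BSD from $K$ to $\bq$ via the Weil restriction. First I would apply Corollary \ref{abBSD} to the base change $A_K := A \times_\bq \Spec(K)$, which inherits its CM structure and Serre--Tate character $\varphi$ from $A/\bq$; since $L(\bar\varphi,1)\neq 0$ by hypothesis, this yields the BSD formula for $A_K/K$.

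Next I would invoke the standard isogeny over $\bq$
$$\Res_{\bq}^{K}(A_K)\sim A\times A_\epsilon,$$
where $A_\epsilon$ is the quadratic twist of $A/\bq$ by the character $\epsilon$ of $\Gal(K/\bq)$. The key intermediate step is to show $A_\epsilon\sim A$ over $\bq$. Since $L\supset K$, any purely imaginary $\alpha\in\co_K\subset\co_L=\End_K(A)$ defines an isogeny $\alpha\colon A\to A$ over $K$. The Galois action of the nontrivial $\sigma\in\Gal(K/\bq)$ on $\End_K(A)\simeq\co_L$ is by complex conjugation, which restricts on $\co_K$ to its nontrivial automorphism, so $^\sigma\alpha=-\alpha$. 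Hence $\alpha$ satisfies the cocycle condition required to descend to an isogeny $A_\epsilon\to A$ over $\bq$, giving $\Res_{\bq}^{K}(A_K)\sim A\times A$.

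Finally, by Milne's theorem \cite{milne72}[Thm. 1] BSD for $A_K/K$ is equivalent to BSD for $\Res_{\bq}^{K}(A_K)/\bq$, and isogeny invariance of BSD then gives BSD for $(A\times A)/\bq$. Each term in this formula is the square of the corresponding term for $A/\bq$: $L(A\times A,s)=L(A/\bq,s)^{2}$; the period (in the normalization of \cite{Flach-Siebel}[Lemma 18]) satisfies $\Omega((A\times A)^+)=\Omega(A^+)^{2}$; $|\Sha((A\times A)/\bq)|=|\Sha(A/\bq)|^{2}$; the Mordell--Weil groups of $A\times A$ and its dual square; and so do the Tamagawa factors. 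Extracting the square root yields the claimed formula, exactly as in the last step of the proof of Corollary \ref{BSD2}.

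The main obstacle is the step $A_\epsilon\sim A$: for elliptic curves this is quoted directly as \cite{milne72}[Thm. 3], and the argument above adapts the construction of the descended isogeny to the general higher-dimensional CM setting using a purely imaginary element of $\co_K$. The remaining bookkeeping --- identifying $\Sha$, Mordell--Weil groups, component groups and the period $\Omega(A^+)$ with the square roots of the corresponding data for $A\times A$ --- is routine and mirrors the concluding sentence of the proof of Corollary \ref{BSD2}.
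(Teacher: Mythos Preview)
Your proposal is correct and follows exactly the approach the paper intends: the paper's proof is the single line ``This follows as in the proof of Cor.~\ref{BSD2}'', and you have faithfully unpacked that reference (Weil restriction from $K$ to $\bq$, the isogeny $\Res^K_\bq(A_K)\sim A\times A$, Milne's equivalence, isogeny invariance, and taking square roots). You go beyond the paper by supplying an explicit argument for $A_\epsilon\sim A$ in the higher-dimensional case, where the citation \cite{milne72}[Thm.~3] used in Cor.~\ref{BSD2} is specific to elliptic curves; your descent via a purely imaginary $\alpha\in\co_K$ is the right idea. One small sharpening: you only need (and only use) that $^{\sigma}\alpha=\bar\alpha$ for $\alpha\in\co_K\subset\co_L$, which follows from the fact that the inclusion $K\hookrightarrow L$ is the structural action on $H^0(A_K,\Omega_{A_K/K})=H^0(A,\Omega_{A/\bq})\otimes_\bq K$; the stronger assertion that $\sigma$ acts on all of $\co_L$ by the complex conjugation of $L$ is not needed.
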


\begin{proof} This follows as in the proof of Cor. \ref{BSD2}.
\end{proof}

\begin{corollary} Let $f$ be an elliptic newform of weight 2, level $N$ and arbitrary character, and let $A_f$ be the isogeny factor of the Jacobian of $X_1(N)$ associated to $f$ by Eichler-Shimura. If $f$ has CM and $L(A_f,1)\neq 0$ then the Birch and Swinnerton-Dyer conjecture holds for $A_f$.
\end{corollary}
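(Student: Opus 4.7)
The plan is to reduce the statement to Corollary \ref{abelianBSD2}.

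Since $f$ has CM by an imaginary quadratic field $K$, there is an algebraic Hecke character $\varphi$ of $K$ of infinity type $(1,0)$ with $L(f,s)=L(\varphi,s)$. Let $L\supseteq K$ be the CM field generated by the values of $\varphi$. By Shimura's construction of CM abelian varieties attached to Hecke characters, there exists an abelian variety $A_\varphi/K$ with $\End_K(A_\varphi)\simeq\co_L$, $\dim A_\varphi=[L:K]$, and Serre-Tate character $\varphi$; in particular $A_\varphi$ satisfies the hypotheses of Theorem \ref{main2}.

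Next, I would establish the isogeny $A_f\times_\bq K\sim A_\varphi$ over $K$. The $\ell$-adic Galois representation of $f$ decomposes as $\rho_f\simeq\Ind_{G_K}^{G_\bq}(\chi_\varphi)$, so by Mackey's formula $V_\ell(A_f\times_\bq K)\otimes_{\bq_\ell}\overline{\bq}_\ell$ is the direct sum $\bigoplus_\sigma\chi_{\sigma\varphi}$ over embeddings $\sigma:L\hookrightarrow\overline{\bq}_\ell$, matching the analogous decomposition of $V_\ell(A_\varphi)\otimes\overline{\bq}_\ell$. Faltings' isogeny theorem then yields the claimed isogeny. After replacing $A_f$ by an isogenous abelian variety over $\bq$ so that $\End_K(A_f\times_\bq K)=\co_L$, the base change $A:=A_f\times_\bq K$ satisfies the hypotheses of Theorem \ref{main2} and is by construction defined over $\bq$, placing us in the situation of Corollary \ref{abelianBSD2}.

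The hypothesis $L(A_f,1)\neq 0$ factors as $\prod_\tau L(\tau f,1)=\prod_\tau L(\tilde\tau\varphi,1)\neq 0$ over embeddings $\tau:E_f\hookrightarrow\bc$ with a choice of extension $\tilde\tau:L\hookrightarrow\bc$; since $\bar\varphi$ is a Galois conjugate of $\varphi$ under complex conjugation on $L$, we obtain $L(\bar\varphi,1)\neq 0$, the non-vanishing hypothesis of Theorem \ref{main2}. Corollary \ref{abelianBSD2} then yields BSD for $A_f/\bq$.

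The main technical step is the identification of $A_f\times_\bq K$ up to isogeny with a CM abelian variety with full endomorphism ring $\co_L$ as required by Theorem \ref{main2}; this is a standard combination of Mackey's formula, Faltings' isogeny theorem, and Shimura's theory of abelian varieties attached to Hecke characters.
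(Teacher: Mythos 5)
Your reduction handles only one of two possible shapes of $A_{f,K}:=A_f\times_\bq K$, and the omitted one is exactly where the extra care in the paper's proof is needed. Writing $L_0=\End_\bq(A_f)_\bq$ for the Hecke eigenvalue field, your isogeny $A_{f,K}\sim A_\varphi$ with $\dim A_\varphi=[L:K]$ requires $\dim A_f=[L_0:\bq]$ to equal $[L:K]$. This holds precisely when $K\not\subseteq L_0$, i.e.\ when $A_{f,K}$ stays simple and $L=L_0K$. But for a newform of weight~$2$ with nontrivial nebentypus (which the statement allows), the Hecke field $L_0$ need not be totally real and can contain $K$; in that case $L=L_0$, $[L:K]=[L_0:\bq]/2<\dim A_f$, and $A_{f,K}$ decomposes (up to isogeny) as $A_1\times A_2$ with each $A_i$ a CM abelian variety of dimension $[L:K]$ with $\End_K(A_i)_\bq\simeq L$. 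Your Mackey/Faltings argument would then give an isogeny of $A_{f,K}$ with $A_\varphi\times A_{\bar\varphi}$, not with $A_\varphi$. Since each $A_i$ is only defined over $K$ (the two factors being swapped by $\Gal(K/\bq)$), one cannot apply Corollary~\ref{abelianBSD2} directly to them; instead one applies Corollary~\ref{abBSD} to each factor, deduces BSD for $A_{f,K}/K$ by isogeny invariance, and then descends to $\bq$ via the restriction-of-scalars argument of Corollary~\ref{BSD2}, which is what the paper does. So you need to add the case distinction $K\subseteq L_0$ vs.\ $K\not\subseteq L_0$ and, in the first case, replace the single $A_\varphi$ by the pair $A_\varphi,A_{\bar\varphi}$ and route the descent through Corollary~\ref{abBSD} plus the $\Res_{K/\bq}$ argument rather than Corollary~\ref{abelianBSD2}.

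Apart from this gap, your identification of $A_{f,K}$ (in the simple case) via Mackey's formula and Faltings' isogeny theorem is a valid alternative to the paper's direct appeal to Ribet's structure theory of $\End(A_f)$; the two routes are essentially interchangeable. The nonvanishing step is fine in substance but could be tightened by invoking Remark~\ref{rem:algebraic}: once one component $L(\bar\varphi_\tau,1)\neq 0$, all components are nonzero, so $L(\bar\varphi,1)\neq 0$ in $L_\br^\times$.
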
 

\begin{proof} Let $K$ be the CM field of $f$ and $L_0:=\End_\bq(A_f)_\bq$ the  
field generated by the Hecke eigenvalues of $f$ \cite{ribet80}[Cor. 4.2]. 
Then the base change $A_{f,K}$ of $A_f$ to $K$ is 
either simple with $\End_K(A_{f,K})_\bq\simeq L:=L_0K$
or $A_{f,K}\sim A_1\times A_2$ with $\End_K(A_i)_\bq\simeq L:=L_0$. 
 By isogeny invariance of BSD we can assume that $A_{f,K}$ has multiplications by the maximal order 
in either case. 
 Then BSD holds for $A_{f,K}$ by Cor. \ref{abBSD} and follows for $A_f$ as in Cor. \ref{BSD2}.
\end{proof}

{\em Remarks on the proof and related work.} The proof of Theorem \ref{main} is naturally situated in the framework of the equivariant Tamagawa number conjecture.  
Let $\fp$ be a prime ideal of $\co_K$ and $p$ the rational prime below. 
The proof begins with a reduction of 
Theorem \ref{main} to the existence of an equivariant zeta element for $E/F$, i.e. a basis $z$ of the $\co_K$-equivariant determinant of the $p$-adic \'etale cohomology of $E/F$ which also encodes the L-value $L(E/F,1)$ (Prop. \ref{keyelliptic}). 
By a descent of the Iwasawa main conjecture for $K$ \cite{jlk}, 
such a basis $z$ is constructed via elliptic units (subsections \ref{sec:descent} and \ref{sec:ell}) whose link to $L(E/F,1)$ is given by an explicit reciprocity law (subsection \ref{sec:erl}).
The main conjecture \cite{jlk} expresses the determinant of Iwasawa cohomology of $K$ in terms of elliptic units,   
as pioneered by Kato \cite{kato92}, \cite{kato93}. 
Our descent of the main conjecture is formulated in terms of perfect complexes and is uniform for any prime $\fp$. 
It first leads to the finiteness of $E(F)$ and $\Sha(E/F)_{\fp^\infty}$ (offering another proof of results of \cite{coates77}, \cite{rubin87}) and then to $z$.

The above approach to the $\fp$-primary part of Gross' conjecture and the BSD formula differs from that of Rubin \cite{rubin91}. The calculus of determinants of perfect complexes does not appear in \cite{rubin91}. The descent \cite{rubin91}[\S11] involves classical Iwasawa modules and is more involved for primes $p$ non-split in $K$ \cite{rubin91}[pp. 61-66].

The CM elliptic curve $X_{0}(49)$ has analytic rank $0$ and it was the first elliptic curve for which the full BSD conjecture was proved \cite[\S22]{gross1}, \cite{rubin87}. 
(See also the discussion in \cite{gross3}[p. 17].)
Since Rubin's fundamental work \cite{rubin91}, the $\fp$-primary part of the  
BSD formula for CM elliptic curves with analytic rank $0$ and primes $\fp$ such that
$$
\fp \big{|} |\co_K^\times|\cdot[F:K]\cdot\mathrm{disc}(F/K)
$$
has been much studied, 
especially the case of CM elliptic curves over $\bq$ and the prime $p=2$. 
This includes the extensive work of Coates \cite{coates91-2}, \cite{coates13}, \cite{coates14}, \cite{tian15}, \cite{coates18}, \cite{coates21}, \cite{coates22}, Kezuka \cite{kezuka18}, \cite{kezuka19}, \cite{kezuka20}, \cite{kezuka21}, Tian 
\cite{tian00}, \cite{tian12}, \cite{tian14}, \cite{tian14-2}, \cite{tian17a}, \cite{tian22}, Tian-Yuan-Zhang \cite{tian17b}, Zhao \cite{zhao97}, \cite{zhao01}, \cite{zhao03}, \cite{zhao05}, as well as 
\cite{rubin94}, \cite{gonzalez97}, \cite{qz}, \cite{ccl}, \cite{choi}, \cite{hsy}, \cite{zhai}, \cite{rosu}, \cite{clz}, \cite{sz}. 
The previous work was the original impetus for our study.    
Coates-Kezuka-Li-Tian \cite{coates22} show the $2$-part of the BSD formula for CM elliptic curves with ordinary reduction at $2$. 
Some of the previous studies focus on specific families, for instance Tian \cite{tian14} and Tian-Yuan-Zhang \cite{tian17b} show key results for congruent number elliptic curves (which also led to Smith's work \cite{sm1}, \cite{sm2}). 
The proofs employ various tools such as the Euler system of elliptic units, explicit Waldspurger formula and congruences between modular forms. 
Yet, prior to Corollary \ref{BSD2}, the $p$-part of the BSD formula for CM elliptic curves  
over $\bq$ with analytic rank $0$ and $p||\co_K^\times|$ remained open in general. 

Our approach to the BSD formula seems amenable to other situations. In future work we plan to consider  
the case of CM elliptic curves with analytic rank $1$.  
\subsection*{Acknowledgements} We would like to thank Benedict Gross, Shinichi Kobayashi, Karl Rubin, Chris Skinner, Ye Tian and Shou-Wu Zhang for helpful comments and discussions. 
We are grateful to the referee for valuable suggestions. 

We were looking forward to send a copy of this article to John Coates for comments when we received the sudden news of his passing. In a sad and strange coincidence of events it is now entirely fitting to dedicate this article to his memory. John not only initiated this particular field of study some 45 years ago \cite{coates20} but he also kept constantly moving it forward over the years with his characteristic passion and energy, being an inspiration to many generations of mathematicians.
 
\section{Preliminary reductions}\label{preliminary}

In this section we reduce the proof of Thm. \ref{main}, resp. Thm. \ref{main2}, to the existence of a basis of the determinant of global Galois cohomology of the Tate module with certain properties, see Prop. \ref{keyelliptic}, resp. Prop. \ref{keyabelian}. Such a basis will then be provided by the combination of Kato's reciprocity law with the Iwasawa main conjecture in the next section.  We present the initial reduction step in the slightly more general context of an abelian variety which is not necessarily CM. This initial reduction step is in principle well known \cite{kato93}[Ch.I. 2.3] and has an analogue for any motive over a number field (see for example \cite{bufl96}[p. 85/86]).

Let $A/F$ be an abelian variety over a number field $F$ with dual abelian variety ${^t}\! A/F$ and denote by $\ca/\co_F$, resp. ${^t}\!\ca/\co_F$, the N\'eron model of $A$, resp. ${^t}\! A$. Let $L$ be a number field so that there is given an embedding  
$$\co_L\to\End_F(A).$$
This induces an embedding $\co_L\to\End_F({^t}\!A)$ by functoriality. We fix a prime number $p$ and define 
$$ L_p:=L\otimes_\bq\bq_p\simeq\prod_{\fp\mid p}L_\fp;\quad \co_{L_p}:=\co_L\otimes_\bz\bz_p\simeq\prod_{\fp\mid p}\co_{L_\fp}$$
and
$$ T:=T_p({^t}\!A)\simeq H^1(A_{\bar{F}},\bz_p(1));\quad V:=V_p({^t}\!A):= T_p({^t}\!A)\otimes_{\bz_p}\bq_p$$
where $T_p({^t}\!A)$ is the ${p}$-adic Tate module of ${^t}\!A$. Let $S$ be a finite set of places of $F$ containing all archimedean places, all places above $p$ and all places of bad reduction. Then we may view $T$ as a smooth sheaf of $\co_{L_p}$-modules on $\Spec(\co_{F,S})_{et}$ and we denote by $R\Gamma(\co_{F,S},T)$ its \'etale cohomology. For each prime $v\mid p$ of $F$ let
$$ H^1(F_v,V) \xrightarrow{\exp_v^*} D^0_{dR}(V)\simeq H^0(A_{F_v},\Omega_{ A_{F_v}/F_v})$$
be the dual exponential map of the $\Gal(\bar{F}_v/F_v)$ -representation $V$ \cite{kato93}[Ch. II, Thm. 1.4.1].
For any prime $v$ of $F$ denote by 
$$ P_v({^t}\!A/F,t)=\mydet_{L_l}(1-\Frob_v^{-1}\cdot t\vert H^1({^t}\!A_{\bar{F}},\bq_l)^{I_v})\in\co_{L}[t]$$
the Euler factor of the $L$-equivariant Hasse-Weil L-function of ${^t}\!A/F$ (which is independent of the auxiliary prime $v\nmid l$) and let $\Phi_v$ be the component group of $\ca$ at $v$. Denote by $|M|_{L_p}$ the part of the order ideal of a finite $\co_{L}$-module $M$ supported in $\{\fp\mid p\}$.

\begin{prop} With the notation just introduced the following hold.

a) If $A(F)$ and $\Sha(A/F)_{p^\infty}$ are finite then the composite map
$$ H^1(\co_{F,S},V)\xrightarrow{}\prod_{v\mid p} H^1(F_v,V) \xrightarrow{\prod_{v\mid p}\exp_v^*} \prod_{v\mid p} H^0( A_{F_v},\Omega_{ A_{F_v}/F_v})$$
is an isomorphism, and $H^i(\co_{F,S},V)=0$ for $i\neq 1$.  We obtain an induced isomorphism 
$$ \iota: \mydet_{L_p}^{-1} R\Gamma(\co_{F,S},V) \simeq  \mydet_{L_p}H^1(\co_{F,S},V) \simeq  \mydet_{L_p} H^0( A,\Omega_{A/F})\otimes_{\bq}\bq_p.$$

b) Assume in addition that $F$ has no real embedding or that $p>2$. Then $R\Gamma(\co_{F,S},T)$ is a perfect complex of $\co_{L_p}$-modules and 
$$  \iota\left(\mydet_{\co_{L_p}}^{-1} R\Gamma(\co_{F,S},T)\right)=\frac{|\Sha(A/F)|_{L_p}}{|A(F)|_{L_p} |{^t}\! A(F)|_{L_p}}\cdot \prod_v|\Phi_v|_{L_p}\cdot \prod_{v\in S} P_v({^t}\!A/F,Nv^{-1})\cdot \Upsilon_p$$
where $\Upsilon_p:=\Upsilon\otimes_{\co_L}\co_{L_p}$,
$$ \Upsilon :=  \mydet_{\co_{L}} \left(H^0(\ca,\Omega_{ \ca/\co_F})\otimes_{\co_F}\cd_{F/\bq}^{-1}\right)$$
and $\cd_{F/\bq}^{-1}$ is the inverse different of the extension $F/\bq$.
\label{key}\end{prop}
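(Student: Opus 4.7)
The plan is to follow the strategy of Kato \cite{kato93}[Ch.~I, 2.3], adapted to the equivariant setting over $\co_{L_p}$, reducing everything to (i) the Bloch--Kato description of $H^1(\co_{F,S},V)$ via the dual exponential, (ii) global Artin--Verdier/Poitou--Tate duality, and (iii) local determinant formulas at each $v\in S$.

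For part (a), first observe that $H^0(\co_{F,S},V)=V^{G_F}=0$, since $A(F)$ and hence ${}^t\!A(F)$ are finite. The Bloch--Kato Selmer group $H^1_f(F,V)\simeq\mathrm{Sel}_{p^\infty}({}^t\!A/F)\otimes_{\bz_p}\bq_p$ vanishes by the finiteness of ${}^t\!A(F)$ and $\Sha({}^t\!A/F)_{p^\infty}$ (which follow from the assumptions on $A$ via the $F$-isogeny between $A$ and ${}^t\!A$ up to its finite kernel). By global Poitou--Tate duality together with the Weil pairing identification $V^*(1)\simeq V_p(A)$, the analogous vanishing for $V_p(A)$ yields $H^2(\co_{F,S},V)=0$. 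The dual exponential $\exp_v^*$ at $v\mid p$ has kernel $H^1_f(F_v,V)$ by Bloch--Kato local theory, so any class in the kernel of the composite in the statement lies in $H^1_f(F,V)=0$; the global Euler characteristic formula then gives $\dim_{L_p}H^1(\co_{F,S},V)=[F:\bq]\cdot\dim A$, matching $\dim_{L_p}\prod_{v\mid p}H^0(A_{F_v},\Omega_{A_{F_v}/F_v})$, so the composite is an isomorphism.

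For part (b), perfectness of $C:=R\Gamma(\co_{F,S},T)$ as a complex of $\co_{L_p}$-modules follows from the hypothesis $p>2$ or $F$ totally imaginary (eliminating pathologies from $2$-torsion at real places) together with $T$ being finite projective over $\co_{L_p}$. The plan is to compute $\mydet^{-1}_{\co_{L_p}}C$ via the standard exact triangle
$$R\Gamma_c(\co_{F,S},T)\to R\Gamma(\co_{F,S},T)\to \bigoplus_{v\in S}R\Gamma(F_v,T),$$
factor the determinant as $\mydet^{-1}R\Gamma_c\otimes\bigotimes_{v\in S}\mydet^{-1}R\Gamma(F_v,T)$, and evaluate each factor separately. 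At $v\mid p$, the Bloch--Kato fundamental exact sequence identifies $\mydet^{-1}R\Gamma(F_v,T)$ with the integral structure on $D^0_{dR}(T)$, namely $H^0(\ca,\Omega_{\ca/\co_F})_v\otimes\cd_{F_v/\bq_p}^{-1}$; the inverse different appears because Tate local duality is normalized by the trace to $\bq_p$ rather than to $F_v$, and assembling over $v\mid p$ yields the factor $\Upsilon_p$. At finite $v\in S$ with $v\nmid p$, the standard unramified formula gives $\mydet^{-1}R\Gamma(F_v,T)=P_v({}^t\!A/F,Nv^{-1})\cdot\co_{L_p}$, while the comparison of $\ca$ with its generic fiber at primes of bad reduction, via the exact sequence $0\to T_p\ca_v^0\to T\to T_p\Phi_v\to 0$, contributes $|\Phi_v|_{L_p}$. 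Archimedean places contribute units under our hypotheses. The compactly supported piece is handled by Artin--Verdier duality $R\Gamma_c(\co_{F,S},T)^\vee\simeq R\Gamma(\co_{F,S},T^*(1))[3]$ with $T^*(1)\simeq T_p(A)$; applying part (a) to $A$ in place of ${}^t\!A$ and invoking the Cassels--Tate pairing between $\Sha(A/F)_{p^\infty}$ and $\Sha({}^t\!A/F)_{p^\infty}$ extracts the factor $|\Sha(A/F)|_{L_p}/(|A(F)|_{L_p}\cdot|{}^t\!A(F)|_{L_p})$.

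The main obstacle will be the careful bookkeeping of integral structures in the equivariant ($\co_{L_p}$- rather than $\bz_p$-linear) framework: in particular the precise appearance of $\cd^{-1}_{F/\bq}$ in $\Upsilon$ (reflecting the mismatch between $\bq_p$-linear Tate local duality and the $F_v$-linear dual-exponential/period pairing), the matching of component group contributions at places of bad reduction, and ensuring that no extraneous units arise in the various determinants of finite torsion $\co_{L_p}$-modules.
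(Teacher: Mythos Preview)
Your treatment of part (a) is essentially correct and close to the paper's. For part (b), however, you have chosen the \emph{wrong exact triangle}, and your accounting of the individual contributions is incomplete.

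The paper does not use the compact-support triangle
\[
R\Gamma_c(\co_{F,S},T)\to R\Gamma(\co_{F,S},T)\to \bigoplus_{v\in S}R\Gamma(F_v,T)
\]
that you propose. Instead it introduces, following Bloch--Kato, the local complexes $R\Gamma_f(F_v,T)={}^t\!A(F_v)^\wedge[-1]$ and their cones $R\Gamma_{/f}(F_v,T)$, assembles these into a global $R\Gamma_f(F,T)$, and uses the triangle
\[
R\Gamma_f(F,T)\to R\Gamma(\co_{F,S},T)\to\bigoplus_{v\in S}R\Gamma_{/f}(F_v,T).
\]
This cleanly separates the arithmetic invariants (Sha and Mordell--Weil, appearing via $R\Gamma_f(F,T)$) from the local-analytic data (Euler factors, Tamagawa numbers, and $\Upsilon_v$, appearing via $R\Gamma_{/f}(F_v,T)$).

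Your approach can in principle be made to work, but your description of the pieces is wrong. At $v\mid p$ the full local complex $R\Gamma(F_v,T)$ does \emph{not} have determinant $\Upsilon_v$: it also carries the rank-$[F_v:\bq_p]\dim A$ lattice ${}^t\!A(F_v)^\wedge$ coming from $H^1_f(F_v,T)$, as well as the Euler factor $P_v({}^t\!A/F,Nv^{-1})$ and the component group $|\Phi_v|_{L_p}$ (both of which you omit at $v\mid p$). Dually, $R\Gamma_c(\co_{F,S},T)$ is \emph{not} captured by Sha and Mordell--Weil alone: by Artin--Verdier it is dual to $R\Gamma(\co_{F,S},A_{p^\infty})$, which has no local conditions, hence contains the local pieces $\det^{-1}({}^t\!A(F_v)^\wedge)$ that must cancel against the excess in $\det^{-1}R\Gamma(F_v,T)$. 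The paper avoids this cross-cancellation entirely; in particular, its computation of $\iota_v(\det^{-1}_{\co_{L_p}}R\Gamma_{/f}(F_v,T))$ at $v\mid p$ requires a nontrivial argument passing through the N\'eron filtration, the formal-group logarithm, and (for the Euler factor) Dieudonn\'e theory of the special fibre---none of which is visible in your sketch.
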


\begin{proof} Consider the following diagram of complexes of $\co_{L_p}$-modules with exact rows and columns
\begin{equation}\begin{CD} R\Gamma_c(\co_{F,S},T) @>>> R\Gamma_f(F,T) @>>> \bigoplus\limits_{v\in S} R\Gamma_f(F_v,T)\\
\Vert @. @VVV@VV\oplus_v \alpha_v V\\
R\Gamma_c(\co_{F,S},T) @>>> R\Gamma(\co_{F,S},T) @>>> \bigoplus\limits_{v\in S} R\Gamma(F_v,T)\\
@. @VV\beta V @VVV\\
{} @. \bigoplus\limits_{v\in S}R\Gamma_{/f}(F_v,T) @= \bigoplus\limits_{v\in S}R\Gamma_{/f}(F_v,T).
\end{CD}\label{wichtig}\end{equation}
Here, following \cite{bk88} we define
$$ R\Gamma_f(F_v,T) = \begin{cases} {^t}\!A(F_v)^\wedge[-1] & v\nmid\infty\\
\tau^{\leq 1} R\Gamma(F_v,T) & v\mid\infty\end{cases}$$
where for any abelian group $M$ we denote by 
\begin{equation} M^\wedge:= \varprojlim_\nu M/p^\nu \label{completion}\end{equation}
its (underived) $p$-adic completion. Since $H^0(F_v,T)=0$ for $v\nmid\infty$ there is a map of complexes
$$\alpha_v: {^t}\!A(F_v)^\wedge[-1]\xrightarrow{\tilde{\alpha}_v} H^1(F_v,T)[-1]\to R\Gamma(F_v,T)$$
where $\tilde{\alpha}_v$ is the inverse limit of the connecting homomorphisms $${^t}\!A(F_v)/p^\nu\to H^1(F_v,{^t}\!A_{p^\nu})$$ induced by the Kummer sequence. The complex $R\Gamma_{/f}(F_v,T)$ is defined as the mapping cone of $\alpha_v$ and the complex $R\Gamma_f(F,T)$ as the mapping fibre of $\beta$.

\begin{lemma} We have
$$ H^i_f(F,T) \simeq\begin{cases} {^t}\!A(F)^\wedge & i=1 \\ \Sha({^t}\!A/F)^\wedge \oplus \Hom_{\bz_p}(A(F)^\wedge,\bz_p) & i=2\\
 \Hom_{\bz_p}(A(F)^\wedge_{tor},\bq_p/\bz_p) & i=3\\ 0 & i\neq 1,2,3\end{cases} $$  
\label{rgf}\end{lemma}

\begin{proof} First note that $R\Gamma_{/f}(F_v,T)$ is concentrated in degrees $1,2$ for $v\nmid\infty$ and there is an isomorphism 
$$ H^i(\co_{F,S},T)\simeq \bigoplus_{\text{$v$ real}} H^i(F_v,T)\simeq \bigoplus_{\text{$v$ real}} H^i_{/f}(F_v,T)$$
for $i\geq 3$ by \cite{milduality}[Prop. II.2.9, Thm. I.4.10]. It follows that $R\Gamma_f(F,T)$ is concentrated in degrees $0\leq i\leq 3$. 
The long exact sequence associated to the middle column in (\ref{wichtig}) gives 
$$ H^0_f(F,T)=H^0(\co_{F,S},T)=0$$
and 
$$ H^1_f(F,T)=\ker\left(H^1(\co_{F,S},T)\to \bigoplus\limits_{v\in S} \frac{H^1(F_v,T)}{{^t}\!A(F_v)^\wedge}\right).$$
Recall that the classical Selmer group $\mathrm{Sel}(F,{^t}\!A_{p^\nu})$ can be defined as
$$ \mathrm{Sel}(F,{^t}\!A_{p^\nu})= \ker\left(H^1(\co_{F,S},{^t}\!A_{p^\nu})\to \bigoplus\limits_{v\in S} \frac{H^1(F_v,{^t}\!A_{p^\nu})}{{^t}\!A(F_v)/p^\nu}\right)$$
since the image of ${^t}\!A(F_v)/p^\nu$ in $H^1(F_v,{^t}\!A_{p^\nu})$ coincides with the unramified classes for $v\notin S$.
Taking the inverse limit over $\nu$ in the short exact sequence
$$ 0\to {^t}\!A(F)/p^\nu\to \mathrm{Sel}(F,{^t}\!A_{p^\nu})\to \Sha({^t}\!A/F)_{p^\nu}\to 0$$
and using finiteness of $\Sha({^t}\!A/F)_{p^\infty}$
we find 
$${^t}\!A(F)^\wedge\simeq \varprojlim_\nu \mathrm{Sel}(F,{^t}\!A_{p^\nu})\simeq H^1_f(F,T).$$
The long exact sequence associated to the top row in (\ref{wichtig}) gives an exact sequence
$$ \bigoplus_{v\in S} H^1_f(F_v,T) \to H^2_c(\co_{F,S},T)\to H^2_f(F,T)\to 0$$
and an isomorphism
$$ H^3_c(\co_{F,S},T)\simeq H^3_f(F,T).$$
Using Artin-Verdier duality \cite{milduality}[Cor. II.3.3] and the fact that our $R\Gamma_c$ agrees with that of loc. cit. (formed with Tate cohomology at the infinite places) in degrees $\geq 2$ we find an exact sequence
$$ 0\to H^2_f(F,T)^*\to H^1(\co_{F,S}, A_{p^\infty})\to  \bigoplus\limits_{v\in S} \frac{H^1(F_v,A_{p^\infty})}{A(F_v)\otimes\bq_p/\bz_p}$$
and an isomorphism
$$ H^3_f(F,T)^* \simeq H^0(\co_{F,S}, A_{p^\infty}).$$
Here we use the definition
$$ M^*:=\Hom_{\bz_p}(M,\bq_p/\bz_p),$$
the isomorphism of $\pi_1(\Spec(\co_{F,S}))$-modules
\begin{equation} T^*(1)\simeq A_{p^\infty}\label{dualmodule}\end{equation}
and the fact that the orthogonal complement of ${^t}\!A(F_v)^\wedge$ under the perfect pairing
$$ H^1(F_v,T)\times H^1(F_v, A_{p^\infty})\to H^2(F_v,\bq_p/\bz_p(1))\simeq\bq_p/\bz_p$$
is $A(F_v)\otimes\bq_p/\bz_p$ \cite{milduality}[Cor. I.3.4, Rem. I.3.5]. Hence we obtain an isomorphism $$H^2_f(F,T)^*\simeq\mathrm{Sel}(F,A_{p^\infty}).$$ Dualizing again we find an exact sequence
$$ 0\to \Sha(A/F)_{p^\infty}^*\to H^2_f(F,T)\to ( A(F)^\wedge\otimes_{\bz_p}\bq_p/\bz_p)^*\to 0$$
and an isomorphism
$$ H^3_f(F,T)\simeq (A(F)^\wedge_{tor})^*.$$
By \cite{milduality}[Thm. I.6.13] if $\Sha(A/F)_{p^\infty}$ is finite there is a non-degenerate pairing
$$ \Sha({^t}\!A/F)^\wedge\times \Sha(A/F)_{p^\infty}\to\bq_p/\bz_p$$
and for any $\bz_p$-module $M$ there is an isomorphism
\begin{align}&\Hom_{\bz_p}(M\otimes_{\bz_p}\bq_p/\bz_p,\bq_p/\bz_p)\label{zpdual}\\
\simeq &\Hom_{\bz_p}(M, \Hom_{\bz_p}(\bq_p/\bz_p,\bq_p/\bz_p))\notag\\
\simeq &\Hom_{\bz_p}(M,\bz_p).\notag\end{align}
Hence we find an exact sequence
$$0\to\Sha({^t}\!A/F)^\wedge\to H^2_f(F,T)\to \Hom_{\bz_p}(A(F)^\wedge,\bz_p)\to 0$$
concluding the proof of Lemma \ref{rgf}.
\end{proof}

\begin{lemma}  We have
$$ H^i_{/f}(F_v,T) =\begin{cases} \Hom_{\bz_p}(A(F_v)^\wedge,\bz_p) & i=1, v\mid p\\
 \Hom_{\bz_p}(A(F_v)^\wedge_{tor},\bq_p/\bz_p) & i=2, v\nmid \infty\\ 
  H^i(F_v,T) & i\geq 3, v\mid\infty\\ 0 & \text{else.}\end{cases} $$  
In particular, for $v\nmid\infty$ there is a quasi-isomorphism of complexes of $\co_{L_p}$-modules
\begin{equation}R\Gamma_{/f}(F_v,T)[1] \simeq R\Hom_{\bz_p}(A(F_v)^\wedge,\bz_p).\label{rgmodfderived}\end{equation}
\label{rgmodf}\end{lemma}

\begin{proof} The Kummer sequence 
$$ 0\to {^t}\! A(F_v)^\wedge\to H^1(F_v,T)\to \varprojlim_\nu H^1(F_v,{^t}\! A)_{p^\nu}\to 0$$
together with duality for abelian varieties over local fields \cite{milduality}[Cor. I.3.4] and (\ref{zpdual})
$$ \varprojlim_\nu H^1(F_v,{^t}\! A)_{p^\nu}\simeq \left(\varinjlim_\nu\, A(F_v)/p^\nu\right)^*\simeq \Hom_{\bz_p}(A(F_v)^\wedge,\bz_p)$$
give the Lemma for $i=1$. Note that these groups vanish unless $v\mid p$. The statement for $i=2$ follows from (\ref{dualmodule}) and Tate local duality \cite{milduality}[Cor. I.2.3]
$$ H^2_{/f}(F_v,T)\simeq H^2(F_v,T)\simeq  H^0(F_v, A_{p^\infty})^*\simeq\left( A(F_v)^\wedge_{tor}\right)^*.$$
The statement for $i\geq 3$ is just the definition of $R\Gamma_{/f}(F_v,T)$. Note here that $H^i(F_v,T)=0$ for $i=2$ and $v\mid\infty$ and for $i\geq 3$ and $v\nmid \infty$. The isomorphism (\ref{rgmodfderived}) can be proved either via a version of Tate duality in the derived category, or by direct inspection since $ \Hom_{\bz_p}(A(F_v)^\wedge_{tor},\bq_p/\bz_p)\simeq
\Ext^1_{\bz_p}( A(F_v)^\wedge,\bz_p)$, and since any bounded complex of $\co_{L_p}$-modules is quasi-isomorphic to the sum of its cohomology groups (placed in their respective degrees).

\end{proof}

For $v\mid p$ the dual exponential map
\begin{equation} H^1_{/f}(F_v, V_p({^t}\! A))\xrightarrow{\exp_v^*} H^0(A_{F_v},\Omega_{A_{F_v}/F_v})\label{dualexp}\end{equation}
is an isomorphism since its dual \cite{kato93}[Ch. II, Thm. 1.4.1]
$$ \mathrm{Lie}(A_{F_v})\xrightarrow{\exp_v}H^1_f(F_v, V_p(A))\simeq A(F_v)^\wedge\otimes_{\bz_p}\bq_p$$
is an isomorphism. This is because the inverse $\log_v$ of $\exp_v$ (the formal group logarithm) induces an isomorphism
\begin{equation}  \log_v:\hat{\ca}(\fm_v^n)\xrightarrow{\sim}\fm_v^n\mathrm{Lie}(\ca_{\co_{F_v}}) \label{formalgrouplog}\end{equation}
for large enough $n$ and
$$ (\fm_v^n\mathrm{Lie}(\ca_{\co_{F_v}}))\otimes_{\bz_p}\bq_p= \mathrm{Lie}(A_{F_v});\quad \hat{\ca}(\fm_v^n)\otimes_{\bz_p}\bq_p=A(F_v)^\wedge\otimes_{\bz_p}\bq_p.$$
Here $\fm_v$ is the maximal ideal of $\co_{F_v}$ and $\hat{\ca}$ is the formal completion of $\ca_{\co_{F_v}}$ at the identity section. If now $A(F)$ and $\Sha(A/F)_{p^\infty}$ are both finite then so are ${^t}\! A(F)$ and $\Sha({^t}\! A/F)_{p^\infty}$ and it follows from Lemmas \ref{rgf} and \ref{rgmodf} that
$$ R\Gamma_f(F,V)\simeq 0;\quad R\Gamma_{/f}(F_v,V)\simeq\begin{cases}H^1_{/f}(F_v, V)[-1] & v\mid p\\ 0&\text{else.} \end{cases} $$
The middle vertical exact triangle in (\ref{wichtig}) then implies part a) of Prop. \ref{key}.

\begin{lemma} For $v\mid p$ let $\iota_v$ be the isomorphism
$$ \iota_v: \mydet_{L_p}^{-1} R\Gamma_{/f}(F_v,V) \simeq  \mydet_{L_p}H^1_{/f}(F_v,V) \simeq  \mydet_{L_p} 
H^0(A_{F_v},\Omega_{ A_{F_v}/F_v})$$
induced by the dual exponential map (\ref{dualexp}). For $v\nmid p\infty$ let
$$ \iota_v: \mydet_{L_p}^{-1} R\Gamma_{/f}(F_v,V) \simeq L_p$$
be the isomorphism arising from acyclicity of $R\Gamma_{/f}(F_v,V)$.
Then
\begin{equation}  \iota_v\left(\mydet_{\co_{L_p}}^{-1} R\Gamma_{/f}(F_v,T)\right)=|{^t}\Phi_v|_{L_p}\cdot  P_v({^t}\!A/F,Nv^{-1})\cdot \Upsilon_v\label{vol}\end{equation}
where
$$ \Upsilon_v := \begin{cases} \mydet_{\co_{L_p}} \left(H^0(\ca_{\co_{F_v}},\Omega_{ \ca_{\co_{F_v}}/\co_{F_v}})\otimes_{\co_{F_v}}\cd^{-1}_{F_v/\bq_p}\right) & v\mid p \\ \co_{L_p} & v\nmid p\end{cases}$$
and $\cd^{-1}_{F_v/\bq_p}$ is the inverse different of the extension $F_v/\bq_p$.
\label{localvolume} \end{lemma}

\begin{proof} Let $\ca^0/\co_F$ be the open sub-group scheme of $\ca/\co_F$ so that  $\ca^0_{\kappa_v}$ is the connected component of the identity of $\ca_{\kappa_v}$ for each residue field $\kappa_v$ of $\co_F$. We have a filtration of the group $A(F_v)=\ca(\co_{F_v})$ given by exact sequences of $\co_L$-modules
$$ 0\to \ca^0(\co_{F_v})\to A(F_v)\to \Phi_v\to 0$$
and
$$ 0\to \hat{\ca}(\fm_v)\to \ca^0(\co_{F_v})\to \ca^0(\kappa_v)\to 0.$$
Since all these groups have bounded $p$-primary torsion, the $p$-adic completion functor (\ref{completion}) is exact and we obtain
exact sequences of $\co_{L_p}$-modules
\begin{equation} 0\to \ca^0(\co_{F_v})^\wedge\to A(F_v)^\wedge\to \Phi_v^\wedge\to 0\label{componentseq}\end{equation}
and
\begin{equation} 0\to \hat{\ca}(\fm_v)^\wedge\to \ca^0(\co_{F_v})^\wedge\to \ca^0(\kappa_v)^\wedge\to 0.\label{formalseq}\end{equation}

\begin{lemma} For any finite place $v$ of $F$ and any prime $p$ there is an identity of fractional $\co_{L_p}$-ideals
\begin{equation} \frac{|\ca^0(\kappa_v)^\wedge|_{L_p}}{|\Lie(\ca_{\kappa_v})^\wedge|_{L_p}}=P_v({^t}\!A/F,Nv^{-1}).\label{euler2}\end{equation}
\end{lemma}

\begin{proof} 
The smooth, connected commutative group scheme $\ca^0_{\kappa_v}$ over the perfect field $\kappa_v$ has a filtration, preserved by any endomorphism,
\begin{equation} 0\to U\to \ca^0_{\kappa_v}\to B\to 0\label{uni}\end{equation}
where $U$ is unipotent (and smooth and connected) and $B$ is semiabelian (combine Chevalley's theorem \cite{conrad02} with \cite{sga3}[XVII Thm. 7.2.1]). We claim that
\begin{equation} \frac{|U(\kappa_v)^\wedge|_{L_p}}{|\Lie(U)^\wedge|_{L_p}}=1.\label{uvanishing}\end{equation}
The group scheme $U$ has a filtration with successive quotients $\bg_a$  \cite{sga3}[XVII, Prop. 4.1.1]. The Lie algebra functor being exact for smooth group schemes, there is a corresponding filtration of $\Lie(U)$. Since $H^1(G_{\kappa_v},U')=0$ for any connected group scheme $U'/\kappa_v$ there is also a corresponding filtration of $U(\kappa_v)$ with successive quotients $\bg_a(\kappa_v)=\kappa_v$. So for $v\nmid p$ we have $U(\kappa_v)^\wedge=\Lie(U)^\wedge=0$ and (\ref{uvanishing}) holds.
Since $p$ annihilates $\bg_a$ for $v\mid p$ some power $p^\nu$ annihilates $U$, and the action of $\co_L$ on $U$ factors through the finite semilocal ring $\co_L/p^\nu$.
The indecomposable idempotents $e_1,\dots, e_r$ of $\co_L/p^\nu$ act by algebraic endomorphisms on $U$, so have closed image $e_iU$ and 
$$ U\simeq e_1U\times\cdots\times e_rU.$$
To prove (\ref{uvanishing}) it suffices to show 
$$\mathrm{length}_{e_i\co_L/p^\nu}(e_iU(\kappa_v))=\mathrm{length}_{e_i\co_L/p^\nu}(\Lie(e_iU))$$
for $i=1,\dots,r$. This follows from the fact that $e_iU$ is itself unipotent, smooth, connected, hence has a filtration with subquotients $\bg_a$ and 
$\bg_a(\kappa_v)\simeq\kappa_v\simeq \Lie(\bg_a)$.

By similar reasoning the filtration (\ref{uni}) induces corresponding filtrations on $\Lie(\ca^0_{\kappa_v})\simeq \Lie(\ca_{\kappa_v})$ and on $\ca^0(\kappa_v)$. 
It therefore suffices to show 
\begin{equation}
\frac{|B(\kappa_v)^\wedge|_{L_p}}{|\Lie(B)^\wedge|_{L_p}}=P_v({^t}\!A/F,Nv^{-1}).\label{euler3}\end{equation}
For $v\nmid p$ we have
\begin{align} P_v({^t}\!A/F,Nv^{-1})=&\mydet_{L_p}(1-\Frob_v^{-1}\cdot Nv^{-1}\vert H^1({^t}\!A_{\bar{F}},\bq_p)^{I_v})\notag\\
=& \mydet_{L_p}(1-\Frob_v^{-1}\vert H^1({^t}\!A_{\bar{F}},\bq_p(1))^{I_v})\notag\\
=& \mydet_{L_p}(1-\Frob_v^{-1}\vert V_p(A)^{I_v})\notag\\
=& \mydet_{L_p}(1-\Frob_v^{-1}\vert V_p(B))
\notag\end{align}
where the last identity is \cite{sga7}[IX, Prop. 2.2.5]. Moreover $\Lie(B)^\wedge=0$ and $\Frob_v$ acts invertibly on $T_p(B)$. The exact sequence\footnote{It arises as follows. The snake lemma applied to 
\begin{equation}\begin{CD} 
0 @>>> B(\kappa_v) @>>> B(\overline{\kappa}_{v}) @>\Frob_v-1 >> B(\overline{\kappa}_{v}) @>>>0\\
@. @VVp^{n}V @VVp^{n}V@VVp^{n} V @.\\
0 @>>> B(\kappa_v) @>>> B(\overline{\kappa}_{v}) @>\Frob_v-1 >>B(\overline{\kappa}_{v})@>>>0\\ 
\end{CD}\label{frob}\end{equation}
gives an exact sequence of finite $\co_{L_{p}}$-modules
$$
0 \to B(\kappa_v)[p^n] \to B(\overline{\kappa}_{v})[p^{n}] \xrightarrow{\Frob_{v}-1} B(\overline{\kappa}_{v})[p^{n}] \to B(\kappa_{v})/p^n \to 0
$$
to which one applies the projective limit over $n$ (an exact functor in this case).} of $\co_{L_p}$-modules
$$ 0\to T_p(B)\xrightarrow{\Frob_v-1}T_p(B)\to B(\kappa_v)^\wedge\to 0$$
then shows that 
$$ |B(\kappa_v)^\wedge|_{L_p}=\mydet_{L_p}(\Frob_v-1\vert V_p(B))\sim_{\co_{L_p}^\times}\mydet_{L_p}(1-\Frob_v^{-1}\vert V_p(B))$$
verifying (\ref{euler3}). 

For $v\mid p$ let $D(B)$ be the covariant Dieudonn\'e module of the $p$-divisible group $B[p^\infty]$ associated to $B/\kappa_v$ \cite{chaioort}[Thm. 4.33]. This is a free $W(\kappa_v)$-module so that  
\begin{align*}&P_v({^t}\!A/F,Nv^{-1})\\= &\mydet_{L_p\otimes W(\kappa_v)}(1-\Frob_v^{-1}\vert D(B)_\bq)\\
= & \mydet_{L_p\otimes W(\kappa_v)}(\Frob_v-1\vert D(B)_\bq)\cdot  \mydet_{L_p\otimes W(\kappa_v)}(\Frob_v\vert D(B)_\bq)^{-1}\\
= & \mydet_{L_p\otimes W(\kappa_v)}(V^{[\kappa_v:\bF_p]}-1\vert D(B)_\bq)\cdot  \mydet_{L_p\otimes W(\kappa_v)}(V^{[\kappa_v:\bF_p]}\vert D(B)_\bq)^{-1}
  \end{align*}
where $V$ denotes the Verschiebung on $D(B)$ and the last identity is \cite{chaioort}[Rem. 10.25]. 
\begin{lemma}There is an exact sequence of $\co_{L_p}\otimes W(\kappa_v)$-modules
$$ 0\to D(B)\xrightarrow{V^{[\kappa_v:\bF_p]}-1}D(B)\to B(\kappa_v)^\wedge\otimes_{\bz_p}W(\kappa_v)\to 0$$
\label{eulerlemma}\end{lemma}
\begin{proof} The kernel of the isogeny $B[p^\infty]\xrightarrow{\Frob_v-1}B[p^\infty]$ of $p$-divisible groups over $\kappa_v$ is the constant finite flat group scheme over $\kappa_v$ associated to the finite abelian $p$-group $B(\kappa_v)[p^\infty]\simeq B(\kappa_v)^\wedge$. The covariant Dieudonn\'e module $D(B(\kappa_v)^\wedge)\simeq B(\kappa_v)^\wedge\otimes_{\bz_p} W(\kappa_v)$ of $B(\kappa_v)^\wedge$ sits in an exact sequence
$$ 0\to B(\kappa_v)^\wedge\otimes_{\bz_p} W(\kappa_v)\to D(B)\otimes\bq_p/\bz_p\xrightarrow{\Frob_v-1}D(B)\otimes\bq_p/\bz_p\to 0$$
by \cite{chaioort}[Prop. 4.53 (ii)]. Multiplication by $p^n$ gives a diagram analogous to (\ref{frob}) and one proceeds as in the case $v\nmid p$. The identity $\Frob_v=V^{[\kappa_v:\bF_p]}$ is again \cite{chaioort}[Rem. 10.25].
\end{proof}
Lemma \ref{eulerlemma} shows that
\begin{align*}& \mydet_{L_p\otimes W(\kappa_v)}(V^{[\kappa_v:\bF_p]}-1\vert D(B)_\bq)\\=&|B(\kappa_v)^\wedge\otimes_{\bz_p}W(\kappa_v)|_{\co_{L_p}\otimes W(\kappa_v)}=|B(\kappa_v)^\wedge|_{\co_{L_p}}.\end{align*}
Similarly, the exact sequence of $\co_{L_p}$-modules \cite{chaioort}[Thm. 4.33 (3)]
$$ 0\to D(B)\xrightarrow{V}D(B)\to \Lie(B) \to 0$$
shows
$$ \mydet_{L_p\otimes W(\kappa_v)}(V^{[\kappa_v:\bF_p]}\vert D(B)_\bq)\sim_{(\co_{L_p}\otimes W(\kappa_v))^\times}\mydet_{L_p}(V\vert D(B)_\bq)=|\Lie(B)|_{\co_{L_p}}$$
proving (\ref{euler3}). 
\end{proof}

 For a perfect complex of $\bz_p$-modules put
$$ M^\dag:=R\Hom_{\bz_p}(M,\bz_p).$$
If $M$ is a finite $\co_{L_p}$-module we have
$$ \mydet_{\co_{L_p}}(M^\dag)=\mydet_{\co_{L_p}}(M^*[-1])=|M^*|_{L_p}\cdot\co_{L_p} \subset \mydet_{{L_p}}(0)=L_p.$$
For $v\mid p$ the isomorphism (\ref{formalgrouplog}) together with the isomorphisms
$$ \hat{\ca}(\fm_v^i)/\hat{\ca}(\fm_v^{i+1})\xrightarrow{\sim}\fm_v^i\mathrm{Lie}(\ca_{\co_{F_v}})/\fm_v^{i+1}\mathrm{Lie}(\ca_{\co_{F_v}})$$
for $i=1,\dots,n-1$ give an equality
\begin{align} \iota_v\left(\mydet_{\co_{L_p}}(\hat{\ca}(\fm_v)) \right) =  & \iota_v\left(\mydet_{\co_{L_p}}(\fm_v\mathrm{Lie}(\ca_{\co_{F_v}}))\right) \notag\\
= & \iota_v\left(\mydet_{\co_{L_p}}(\mathrm{Lie}(\ca_{\co_{F_v}}))\right)\cdot |\mathrm{Lie}(\ca_{\co_{F_v}})\otimes_{\co_{F_v}}\kappa_v|^{-1}_{L_p}\notag\\
= & \iota_v\left(\mydet_{\co_{L_p}}(\mathrm{Lie}(\ca_{\co_{F_v}}))\right)\cdot |\Lie(\ca_{\kappa_v})|^{-1}_{L_p}\notag\\
\stackrel{(\ref{euler2})}{=} & \iota_v\left(\mydet_{\co_{L_p}}(\mathrm{Lie}(\ca_{\co_{F_v}}))\right)\cdot |\ca^0(\kappa_v)^{\wedge}|_{L_p}^{-1}\cdot P_v({^t}\!A/F,Nv^{-1}).\label{euler5}
\end{align}
For $v\nmid p$ we have $\hat{\ca}(\fm_v)^\wedge=0$. Hence
\begin{align*} 
     &\iota_v\left(\mydet_{\co_{L_p}}^{-1} R\Gamma_{/f}(F_v,T)\right)\notag\\
 \stackrel{(\ref{rgmodfderived})}{ = }&\iota_v\left(\mydet_{\co_{L_p}}(A(F_v)^{\wedge,\dag})\right)\\
 \stackrel{(\ref{componentseq})}{ = }& \iota_v\left(\mydet_{\co_{L_p}}(  \ca^0(\co_{F_v})^{\wedge,\dag})\right) \cdot | \Phi_v^{\wedge,*} |_{L_p}\\
 \stackrel{(\ref{formalseq})}{ = }& \iota_v\left(\mydet_{\co_{L_p}}(   \hat{\ca}(\fm_v)^{\wedge,\dag} )\right) \cdot |\ca^0(\kappa_v)^{\wedge,*}|_{L_p} \cdot |\Phi_v^{\wedge,*} |_{L_p}\\
\stackrel{(\ref{euler5})}{  = }& \begin{cases} \iota_v\left(\mydet_{\co_{L_p}}(\mathrm{Lie}(\ca_{\co_{F_v}})^\dag)\right)\cdot P_v({^t}\!A/F,Nv^{-1})\cdot |{^t}\Phi_v|_{L_p} & v\mid p\\
  P_v({^t}\!A/F,Nv^{-1})\cdot |{^t}\Phi_v|_{L_p} & v\nmid p.\end{cases}
\end{align*}
Here we have used the perfect perfect pairing of finite groups \cite{sga7}[IX, 1.3.1]
\begin{equation} {^t}\Phi_v \times \Phi_v\to \bq/\bz.\label{componentpairing}\end{equation} 
The proof of Lemma \ref{localvolume} is now completed by the following isomorphisms
\begin{align*} 
\mathrm{Lie}(\ca_{\co_{F_v}})^\dag \simeq & \Hom_{\bz_p}(\mathrm{Lie}(\ca_{\co_{F_v}})\otimes_{\co_{F_v}}\co_{F_v},\bz_p)\\
\simeq & \Hom_{\co_{F_v}}(\mathrm{Lie}(\ca_{\co_{F_v}}),\Hom_{\bz_p}(\co_{F_v},\bz_p))\\
\simeq & \Hom_{\co_{F_v}}(\mathrm{Lie}(\ca_{\co_{F_v}}),\co_{F_v})\otimes_{\co_{F_v}} \cd^{-1}_{F_v/\bq_p}\\
\simeq & H^0(\, \ca_{\co_{F_v}},\Omega_{\, \ca_{\co_{F_v}}/\co_{F_v}})\otimes_{\co_{F_v}}\cd^{-1}_{F_v/\bq_p}.
\end{align*} 
\end{proof}
We complete the proof of part b) of Prop. \ref{key}. The middle vertical exact triangle in (\ref{wichtig}) and Lemmas \ref{rgf} and \ref{localvolume} give
\begin{align*} &\iota\left(\mydet_{\co_{L_p}}^{-1} R\Gamma(\co_{F,S},T)\right)\\ = &
\iota\left(\mydet_{\co_{L_p}}^{-1} R\Gamma_f(F,T)\otimes \bigotimes\limits_{v\in S}\mydet_{\co_{L_p}}^{-1} R\Gamma_{/f}(F_v,T) \right)\\
= &\frac{|\Sha({^t}\!A/F)|_{L_p}}{|A(F)|_{L_p} |{^t}\! A(F)|_{L_p}}\cdot \prod_v|{^t}\Phi_v|_{L_p}\cdot \prod_{v\in S} P_v({^t}\!A/F,Nv^{-1})\cdot \Upsilon_p
\end{align*} 
using the isomorphism
\begin{align*} H^0(\, \ca,\Omega_{\, \ca/\co_F})\otimes_{\co_F}\cd_{F/\bq}^{-1}\otimes_{\bz}\bz_p\simeq 
&\prod_{v\mid p} H^0(\, \ca,\Omega_{\, \ca/\co_F})\otimes_{\co_F}\cd_{F/\bq}^{-1}\otimes_{\co_F}\co_{F_v}\\
\simeq &\prod_{v\mid p} H^0(\, \ca_{\co_{F_v}},\Omega_{\, \ca_{\co_{F_v}}/\co_{F_v}})\otimes_{\co_{F_v}}\cd^{-1}_{F_v/\bq_p}.
\end{align*}
Since $\Sha({^t}\!A/F)$ and $\Sha(A/F)$, resp. ${^t}\Phi_v$ and $\Phi_v$, are dual finite abelian groups with dual $\co_L$-action we have in fact
$$ |\Sha({^t}\!A/F)|_{L_p}=|\Sha(A/F)|_{L_p};\quad |{^t}\Phi_v|_{L_p}=|\Phi_v|_{L_p}$$
concluding the proof of b).
\end{proof}

For an abelian variety $A/F$ with multiplications by $\co_L\to\End_F(A)$ and each place $v\mid\infty$ of $F$ consider the $\bq$-bilinear $L$-balanced integration pairing
\begin{equation} H_1(A(F_v),\bq) \times H^0(A,\Omega_{A/F})\otimes_FF_v \to F_v\xrightarrow{\mathrm{tr}_{F_v/\br}}\br;\quad  (\gamma,\omega)\mapsto \mathrm{tr}_{F_v/\br}\int_\gamma\omega
%\label{aintegration}
\notag\end{equation}
which induces $L_\br$-linear isomorphisms
\begin{align*} \mathrm{per}_v: H^0(A,\Omega_{A/F})\otimes_FF_v& \xrightarrow{\sim} \Hom_\bq(H_1(A(F_v),\bq) ,\br)\xrightarrow{\sim} H^1(A(F_v),\br).\end{align*}
These isomorphisms combine to give a $L_\br$-linear (Deligne) period isomorphism
\begin{equation} \mathrm{per}_{A}: H^0(A,\Omega_{A/F})_\br\simeq \prod_{v\mid\infty}H^1(A(F_v),\br).\label{periodiso}\end{equation}

\begin{definition} For the invertible $\co_L$-module
$$ \Upsilon :=  \mydet_{\co_{L}} \left(H^0(\, \ca,\Omega_{\, \ca/\co_F})\otimes_{\co_F}\cd_{F/\bq}^{-1}\right)$$
introduced in Prop. \ref{key} choose a period  $\Omega\in L_\br^\times$ and a  fractional $\co_L$-ideal $\fa(\Omega)\subset L$ so that
\begin{equation} \mydet_{L_\br}(\mathrm{per}_{A})(\Upsilon)=\Omega \cdot \fa(\Omega)\cdot\mydet_{\co_L}\left(\prod_{v\mid\infty}H^1(A(F_v),\bz)\right)
\label{A-omegadef}\end{equation}
under the determinant of the period isomorphism (\ref{periodiso}).
\label{perioddef}\end{definition} 

Let $A/F$ be a CM abelian variety together with an isomorphism
$$\mu:\co_L\simeq\End_F(A)$$ for a CM field $L$ with $[L:\bq]=2\dim(A)$.
To the CM abelian variety $A/F$ is attached a Serre-Tate character \cite{serretate}[Thm. 10]
$$ \varphi: \ba_F^\times \to L^\times,\quad \varphi\vert_{F^\times}=F^\times\xrightarrow{t}L^{\times}$$
from which a Hecke character  
\begin{equation} \varphi_\tau:\ba_F^\times/F^\times \xrightarrow{\varphi\cdot (t_\br^{-1}\cdot p_\infty)} L_\br^\times\xrightarrow{\tau}\bc^\times\label{hecke}\end{equation}
is deduced for each $\tau\in\Hom(L,\bc)$. Here $p_\infty$ is the projection to $F_\br^\times$ and $t$ is an algebraic homomorphism determined by the CM-type of $A/F$. 
We have the $L_\bc$-valued L-function
$$ L(\varphi,s):=\left(L(\varphi_\tau,s)\right)_\tau\in\prod_\tau\bc\simeq L_\bc$$
which takes values in $L_\br$ for real $s$.  If 
$$\mu':\co_L\simeq \End_F({^t}\!A)$$
 denotes the isomorphism functorially induced by $\mu$ then a polarization $p:A\to{^t}\!A$ induces an isomorphism
$$ (A,\rho\circ\mu)\simeq ({^t}\!A,\mu')$$
of abelian varieties with CM by $L$ (up to isogeny). Here $\rho$ denotes the Rosati involution associated to $p$. Since $\rho$ induces complex conjugation on $L$ the Serre-Tate character of $({^t}\!A,\mu')$ is $\bar{\varphi}$.

\begin{prop} Let $A/F$ be a CM abelian variety so that
$$\co_L\xrightarrow{\sim}\End_F(A)$$ for a CM field $L$ with $[L:\bq]=2\dim(A)$ and assume
%Let  $$\varphi:\ba_F^\times/F^\times\to \bc^\times.$$ be the associated Hecke character and assume that 
$$L(\bar{\varphi},1)\neq 0.$$ 
Let $p$ be any prime number, $T=T_p({^t}\!A)$, $V=T\otimes_{\bz_p}\bq_p$ and $S$ a finite set of places of $F$ containing $\{v\mid p\infty\}$ and all places of bad reduction. Assume that $\Sha(A/F)_{p^\infty}$ and $A(F)$ are finite and let
$$ \iota: \mydet_{L_p}^{-1} R\Gamma(\co_{F,S},V) \simeq  \mydet_{L_p}H^1(\co_{F,S},V) \simeq   \mydet_{L_p} H^0(A,\Omega_{ A/F})\otimes_{\bq}\bq_p$$
be the isomorphism of Prop. \ref{key}. 
%Let $\mathrm{per}$ be the period isomorphism defined in (\ref{periodiso}) and $\Omega$ the period defined in (\ref{omegadef}).
Assume there exists $$z\in \mydet _{L_p} H^1(\co_{F,S},V)$$ and a fractional $\co_L$-ideal $\fa(z)$ prime to $p$
with the following properties
\begin{itemize}
\item[a)] $\co_{L_p}\cdot z=  \mydet_{\co_{L_p}}^{-1} R\Gamma(\co_{F,S},T)$
\item[b)] $\iota(z)\in\mydet_L  H^0(A,\Omega_{A/F}) \subset  \mydet_{L_p} \left(H^0(A,\Omega_{ A/F})\otimes_{\bq}\bq_p\right)$
\item[c)] $\co_L\cdot\mydet_{L_\br}(\mathrm{per}_{A})(\iota(z))=L_S(\bar{\varphi},1)\cdot\fa(z)\cdot\mydet_{\co_L}\left(\prod\limits_{v\mid\infty}H^1(A(F_v),\bz)\right)$ 
\end{itemize}
Then $ \frac{L(\bar{\varphi},1)}{\Omega}\in L^\times$ and
$$ \frac{L(\bar{\varphi},1)}{\Omega}=\frac{|\Sha(A/F)|_{L_p}}{|A(F)|_{L_p} |{^t}\! A(F)|_{L_p}}\cdot \prod_v|\Phi_v|_{L_p}\cdot\fa(\Omega)$$
in the group of fractional $\co_L$-ideals supported in $\{\fp\mid p\}$.
\label{keyabelian}\end{prop}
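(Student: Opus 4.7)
The plan is to compare two computations of $\iota(z)$: the arithmetic one supplied by assumption (a) together with Proposition \ref{key}(b), and the analytic one obtained by applying the period determinant $\mydet(\mathrm{per}_{A})$ to the $L$-rational element $\iota(z)$, using Definition \ref{perioddef} and assumption (c). Combining (a) with Proposition \ref{key}(b) gives the equality of $\co_{L_p}$-submodules
$$\co_{L_p}\cdot\iota(z) \;=\; I_p\cdot\Upsilon_p$$
inside $\mydet_{L_p}(H^0(A,\Omega_{A/F})\otimes_{\bq}\bq_p)$, where
$$I_p \;:=\; \frac{|\Sha(A/F)|_{L_p}}{|A(F)|_{L_p}\cdot|{^t}\!A(F)|_{L_p}}\cdot\prod_v|\Phi_v|_{L_p}\cdot\prod_{v\in S}P_v({^t}\!A/F,Nv^{-1});$$
here the Cassels--Tate pairing and the pairing (\ref{componentpairing}) replace $|\Sha({^t}\!A/F)|_{L_p}$ and $|{^t}\Phi_v|_{L_p}$ in Proposition \ref{key}(b) by $|\Sha(A/F)|_{L_p}$ and $|\Phi_v|_{L_p}$. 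By assumption (b), $\iota(z)\in\mydet_L H^0(A,\Omega_{A/F})$, so the fractional $\co_L$-ideal $\mathfrak{b}:=\co_L\cdot\iota(z)/\Upsilon$ in the one-dimensional $L$-line $\mydet_L H^0(A,\Omega_{A/F})$ is well defined and has $p$-part $\mathfrak{b}_p=I_p$.

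Now I apply $\mydet(\mathrm{per}_{A})$ to the global $\co_L$-identity $\co_L\cdot\iota(z)=\mathfrak{b}\cdot\Upsilon$. Definition \ref{perioddef} gives $\mydet(\mathrm{per}_{A})(\Upsilon)=\Omega\cdot\fa(\Omega)\cdot\mydet_{\co_L}(\prod_{v\mid\infty}H^1(A(F_v),\bz))$, so
$$\co_L\cdot\mydet(\mathrm{per}_{A})(\iota(z)) \;=\; \mathfrak{b}\cdot\Omega\cdot\fa(\Omega)\cdot\mydet_{\co_L}\Bigl(\prod_{v\mid\infty}H^1(A(F_v),\bz)\Bigr).$$
Comparing with assumption (c) and canceling the common factor $\mydet_{\co_L}(\prod_{v\mid\infty}H^1(A(F_v),\bz))$ produces
$$\mathfrak{b}\cdot\Omega\cdot\fa(\Omega) \;=\; L_S(\bar\varphi,1)\cdot\fa(z)$$
as $\co_L$-submodules of $L_\br$. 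Since $\mathfrak{b}$, $\fa(\Omega)$, and $\fa(z)$ are genuine nonzero fractional $\co_L$-ideals in $L$, picking any nonzero element on either side and matching it to a corresponding element on the other side forces $\Omega/L_S(\bar\varphi,1)\in L^\times$, equivalently $L(\bar\varphi,1)/\Omega\in L^\times$, since the finite Euler factors $L_S(\bar\varphi,1)/L(\bar\varphi,1)=\prod_{v\in S,\,v\nmid\infty}P_v(\bar\varphi,Nv^{-1})$ already lie in $L^\times$.

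Finally I take $p$-parts of the $\co_L$-identity above, using $\fa(z)_p=\co_{L_p}$ (since $\fa(z)$ is prime to $p$) and $\mathfrak{b}_p=I_p$, and cancel the finite Euler factors $P_v(\bar\varphi,Nv^{-1})=P_v({^t}\!A/F,Nv^{-1})$ in $I_p$ against those in $L_S(\bar\varphi,1)$. This yields the claimed identity
$$\frac{L(\bar\varphi,1)}{\Omega}\cdot\co_{L_p} \;=\; \frac{|\Sha(A/F)|_{L_p}}{|A(F)|_{L_p}\cdot|{^t}\!A(F)|_{L_p}}\cdot\prod_v|\Phi_v|_{L_p}\cdot\fa(\Omega)_p$$
of fractional $\co_L$-ideals supported at primes above $p$. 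The main care is in the bookkeeping of Euler factors at places in $S$ (in particular at $v\mid p$, where the interpretation of $P_v$ and its compatibility with the dual exponential map must match), and in the rationality step which, although formally a short $L$-vs-$L_\br$ argument, is where the transition from the Iwasawa-theoretic input to the global Gross-type identity is sealed.
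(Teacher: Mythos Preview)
Your proposal is correct and follows essentially the same route as the paper: apply Proposition~\ref{key}(b) together with assumption (a) to compute $\co_{L_p}\cdot\iota(z)$ arithmetically, push through $\mydet(\mathrm{per}_A)$ using Definition~\ref{perioddef}, and compare with assumption (c) to extract the desired identity of fractional ideals after cancelling the Euler factors $P_v({^t}\!A/F,Nv^{-1})=1-\overline{\varphi(v)}/Nv$ against those in $L_S(\bar\varphi,1)$. The only cosmetic difference is that the paper identifies the Euler factors with the Hecke $L$-factors at the outset and works directly at the $p$-completed level, whereas you introduce the auxiliary global ideal $\mathfrak{b}$ before localizing; also note (as the paper does) that $F$ is totally imaginary so Proposition~\ref{key} applies for $p=2$ as well.
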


\begin{proof} 

 First note that
\begin{align*} P_v({^t}\!A/F,t)=&\mydet_{L_l}(1-\Frob_v^{-1}\cdot t\vert H^1({^t}\!A_{\bar{F}},\bq_l)^{I_v})\\
=&\mydet_{L_l}(1-\Frob_v\cdot t\vert V_l({^t}\!A)_{I_v})\\
=&1-\overline{\varphi(v)}\cdot t
\end{align*}
and the $L$-equivariant L-function of ${^t}\!A/F$ agrees with $L(\bar{\varphi},s)$.  Also note that $F$ is totally imaginary (since the action of $\co_L$ is defined over $F$) and hence Prop. \ref{key} applies for all primes $p$. By Prop. \ref{key} and a) we have
\begin{align*} \co_{L_p}\cdot \iota(z) =&  \iota\left(\mydet_{\co_{L_p}}^{-1} R\Gamma(\co_{F,S},T)\right)\\
=&\frac{|\Sha(A/F)|_{L_p}}{|A(F)|_{L_p} |{^t}\! A(F)|_{L_p}}\cdot \prod_v|\Phi_v|_{L_p}\cdot \prod_{v\in S} P_v({^t}\!A/F,Nv^{-1})\cdot \Upsilon_p\\
=&\frac{|\Sha(A/F)|_{L_p}}{|A(F)|_{L_p} |{^t}\! A(F)|_{L_p}}\cdot \prod_v|\Phi_v|_{L_p}\cdot \prod_{v\in S} \left(1-\frac{\overline{\varphi(v)}}{Nv}\right)\cdot \Upsilon_p
\end{align*}
and by the definition (\ref{A-omegadef}) of $\Omega$ and $\fa(\Omega)$ we have
\begin{align*}  \co_{L_p}\cdot \mydet_{L_\br}(\mathrm{per_{A}})(\iota(z))
=&\frac{|\Sha(A/F)|_{L_p}}{|A(F)|_{L_p} |{^t}\! A(F)|_{L_p}}\cdot \prod_v|\Phi_v|_{L_p}\cdot \prod_{v\in S} \left(1-\frac{\overline{\varphi(v)}}{Nv}\right)\\
&\cdot\Omega\cdot\fa(\Omega)\cdot\mydet_{\co_L}\left(\prod_{v\mid\infty}H^1(A(F_v),\bz)\right).
 \end{align*}
Comparing this identity with c) we find
$$ L_S(\bar{\varphi},1)= \frac{|\Sha(A/F)|_{L_p}}{|A(F)|_{L_p} |{^t}\! A(F)|_{L_p}}\cdot \prod_v|\Phi_v|_{L_p}\cdot \prod_{v\in S} \left(1-\frac{\overline{\varphi(v)}}{Nv}\right)\cdot\Omega\cdot\fa(\Omega)$$
up to a fractional $\co_L$-ideal $\fa(z)$ prime to $p$. This is the statement of Proposition \ref{keyabelian}.

\end{proof}

Let now $E/F$ be an elliptic curve over a number field $F$ with complex multiplication by $\co_K$ for an imaginary quadratic field $K$. The period $\Omega\in K_\br^\times$ and the fractional ideal $\fa(\Omega)\subseteq K$ defined in the introduction satisfy 
$$\bigotimes\limits_{v\mid\infty} H_1(E(F_v),\bz)=\Omega\cdot\fa(\Omega)\cdot\mydet_{\co_K}\Hom_{\co_F}(H^0(\ce,\Omega_{\ce/\co_F}) ,\co_F)$$
under the determinant over $K_\br$ of the isomorphism
\begin{equation} \prod_{v\mid\infty} H_1(E(F_v),\bq)_\br\cong \Hom_{F}(H^0(E,\Omega_{E/F}),F)_\br\notag\end{equation} 
which is the $\br$-dual of $\mathrm{per}_E$ defined in (\ref{periodiso}). Since 
\begin{align*} \Hom_\bz(H^1(E(F_v),\bz),\bz) \simeq & H_1(E(F_v),\bz)\\
\Hom_\bz(H^0(\ce,\Omega_{\ce/\co_F})\otimes_{\co_F}\cd_{F/\bq}^{-1}
,\bz)\simeq & \Hom_{\co_F}(H^0(\ce,\Omega_{\ce/\co_F}),\co_F)\end{align*}
the quantities $\Omega$ and $\fa(\Omega)$ defined in the introduction coincide with the period and ideal defined in Def. \ref{perioddef}.

\newcommand{\comm}[1]{}

\comm{
The integration pairings (\ref{integration}) also induces $K_\br$-linear isomorphisms
\begin{align*} \mathrm{per}_v: H^0(E,\Omega_{E/F})\otimes_FF_v& \xrightarrow{\sim} \Hom_K(H_1(E(F_v),\bq) ,K_\br)\\
&\xrightarrow{\circ{\mathrm{tr}}_{K_\br/\br}}\Hom_\bq(H_1(E(F_v),\bq) ,\br)\\
&\xrightarrow{\sim} H^1(E(F_v),\br)\end{align*}
for each place $v\mid\infty$ of $F$ which combine to give a $K_\br$-linear isomorphism
\begin{equation} \mathrm{per}: H^0(E,\Omega_{E/F})_\br\simeq \prod_{v\mid\infty}H^1(E(F_v),\br).\label{ellipticperiodiso}\end{equation}
We recall the definition of the period $\Omega$ from Remark \ref{periodremark} which we can normalize so that the corrective fractional ideal $\fa(\Omega)$ is prime to a given prime $p$. Choose elements 
$$\omega\in H^0(\ce,\Omega_{\ce/\co_F});\quad\quad \gamma_v\in H_1(E(F_v),\bz);\quad v\mid\infty$$ 
so that $\omega$ is a $\co_{F_v}$-basis of $H^0(\ce_{\co_{F_v}},\Omega_{\ce_{\co_{F_v}}/\co_{F_v}})$ for $v\mid p$ and $\gamma_v$ is a $\co_{K_p}$-basis of $H_1(E(F_v),\bz_p)$. Choose elements $\beta_1,\dots,\beta_d\in \cd_{F/K}^{-1}$ which form an $\co_{K_v}$-basis of $\cd_{F_v/K_v}^{-1}$ for each place $v\mid p$ of $F$. Define
\begin{equation} \Omega:=\mydet\left(\int_{\gamma_v}\omega\otimes\beta_k\right)_{v,k}. \label{omegadef}\end{equation}

\begin{lemma} Let 
$$ \Upsilon :=  \mydet_{\co_{K}} \left(H^0(\ce,\Omega_{\ce/\co_F})\otimes_{\co_F}\cd_{F/\bq}^{-1}\right)$$
be the invertible $\co_K$-module of Prop.\ref{key} for $A:=E\simeq {^t}\!E$. Then
$$ \mydet_{K_\br}(\mathrm{per})(\Upsilon)=\Omega \cdot \fa(\Omega)\cdot\mydet_{\co_K}\left(\prod_{v\mid\infty}H^1(E(F_v),\bz)\right)$$
where $\fa(\Omega)$ is a fractional $\co_K$-ideal prime to $p$.
\label{periodlemma}\end{lemma}
\begin{proof} We have isomorphisms of invertible $\co_K$-modules
$$ H^1(E(F_v),\bz)\simeq \Hom_\bz(H_1(E(F_v),\bz),\bz)\simeq \Hom_{\co_K}(H_1(E(F_v),\bz)\otimes_{\co_K}\cd_{K/\bq},\co_K).$$
Hence if $\eta_1,\dots,\eta_d\in \cd_{F/\bq}^{-1}$ are elements forming a $\co_{K_v}$-basis of $\cd_{F_v/\bq_p}^{-1}$ for each place $v\mid p$ of $F$ and $\delta\in\cd_{K/\bq}^{-1}$ is a $\co_{K_v}$-basis of $\cd_{K_v/\bq_p}^{-1}$ for each place $v\mid p$ of $K$ then 
$$ \mydet_{K_\br}(\mathrm{per})(\Upsilon)=\Omega' \cdot \fa(\Omega')\cdot\mydet_{\co_K}\left(\prod_{v\mid\infty}H^1(E(F_v),\bz)\right)$$
where $\fa(\Omega')$ is a fractional $\co_K$-ideal prime to $p$ and
$$ \Omega':=\mydet\left(\int_{\gamma_v\otimes\delta^{-1}}\omega\otimes\eta_k\right)_{v,k}.$$
Since by \cite{serre95}
$$ \cd_{F/\bq}^{-1}=\cd_{F/K}^{-1}\cdot \cd_{K/\bq}^{-1}$$
we can in fact take $\eta_k:=\beta_k\cdot\delta$ and by $K$-bilinearity of the integration pairing we find
$$ \Omega'=\mydet\left(\int_{\gamma_v}\omega\otimes\eta_k\cdot\delta^{-1}\right)_{v,k}=\Omega.$$
\end{proof}
}
\begin{prop} Let $E/F$ be an elliptic curve with CM by $\co_K$ and associated Serre-Tate character
$\psi:\ba_F^\times\to K^\times$.
Assume that $$L(\bar{\psi},1)\neq 0.$$ Let $p$ be any prime number, $T=T_p({^t}E)$, $V=T\otimes_{\bz_p}\bq_p$ and $S$ a finite set of places of $F$ containing $\{v\mid p\infty\}$ and all places of bad reduction. Assume that $\Sha(E/F)_{p^\infty}$ and $E(F)$ are finite and let
$$ \iota: \mydet_{K_p}^{-1} R\Gamma(\co_{F,S},V) \simeq  \mydet_{K_p}H^1(\co_{F,S},V) \simeq  \mydet_{K_p} (H^0(E,\Omega_{E/F})\otimes_{\bq}\bq_p)$$
be the isomorphism of Prop. \ref{key}. Assume there exists 
$$z\in \mydet _{K_p} H^1(\co_{F,S},V)$$
and a fractional $\co_K$-ideal $\fa(z)$ prime to $p$ with the following properties
\begin{itemize}
\item[a)] $\co_{K_p}\cdot z=  \mydet_{\co_{K_p}}^{-1} R\Gamma(\co_{F,S},T)$
\item[b)] $\iota(z)\in\mydet_KH^0(E,\Omega_{E/F})\subset \mydet_{K_p}\!\left( H^0(E,\Omega_{E/F})\otimes_\bq\bq_p\right)$
\item[c)] $\co_K\cdot\mydet_{K_\br}(\mathrm{per}_E)(\iota(z))=L_S(\bar{\psi},1)\cdot\fa(z)\cdot\mydet_{\co_K}\left(\prod\limits_{v\mid\infty}H^1(E(F_v),\bz)\right)$ 
\end{itemize}
Then $ \frac{L(\bar{\psi},1)}{\Omega}\in K^\times$ and
$$ \frac{L(\bar{\psi},1)}{\Omega}=\frac{|\Sha(E/F)|_{K_p}}{|E(F)|}\cdot \prod_v|\Phi_v|_{K_p}\cdot\fa(\Omega)$$
in the group of fractional $\co_K$-ideals supported in $\{\fp\mid p\}$.
\label{keyelliptic}\end{prop}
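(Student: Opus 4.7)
The plan is to deduce Proposition \ref{keyelliptic} as a direct corollary of Proposition \ref{keyabelian} applied to the CM abelian variety $A=E$ with $L=K$. Since $\dim(E)=1$, the equality $[K:\bq]=2=2\dim(E)$ gives the required numerical hypothesis on the CM field, and the Serre--Tate character $\varphi$ of $E$ in the general setting is precisely the character $\psi$ of the present statement. The paragraph immediately preceding Prop. \ref{keyelliptic} already identifies the period $\Omega\in K_\br^\times$ and the fractional ideal $\fa(\Omega)\subseteq K$ of the introduction with those coming from Definition \ref{perioddef} applied to $A=E$, and the hypotheses (a), (b), (c) of the two propositions are literally the same after this substitution.

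Applying Prop. \ref{keyabelian} under these identifications yields, as fractional $\co_K$-ideals supported at primes above $p$,
$$ \frac{L(\bar\psi,1)}{\Omega}=\frac{|\Sha(E/F)|_{K_p}}{|E(F)|_{K_p}\cdot |{^t}\!E(F)|_{K_p}}\cdot \prod_v|\Phi_v|_{K_p}\cdot\fa(\Omega), $$
together with the statement $L(\bar\psi,1)/\Omega\in K^\times$. To obtain the formula as stated in Prop. \ref{keyelliptic} it then suffices to verify the identity
$$|E(F)|_{K_p}\cdot |{^t}\!E(F)|_{K_p}=|E(F)|\cdot\co_{K_p}$$
in the group of fractional $\co_{K_p}$-ideals.

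For this I would use that, via the canonical principal polarization $\lambda:E\xrightarrow{\sim}{^t}\!E$, the Rosati involution acts on $\End_F(E)\simeq\co_K$ as complex conjugation on $K$ (this being the general fact that Rosati restricted to the CM field of a polarized CM abelian variety is complex conjugation). Hence $\lambda$ induces a $\bz$-module isomorphism $E(F)\simeq{^t}\!E(F)$ which intertwines the two natural $\co_K$-actions through the nontrivial element of $\Gal(K/\bq)$; equivalently, $E(F)$ and ${^t}\!E(F)$ are complex conjugate finite $\co_K$-modules. Taking order ideals gives $|E(F)|_K=\overline{|{^t}\!E(F)|_K}$, so that
$$|E(F)|_K\cdot |{^t}\!E(F)|_K=|{^t}\!E(F)|_K\cdot \overline{|{^t}\!E(F)|_K}=N_{K/\bq}\bigl(|{^t}\!E(F)|_K\bigr)\cdot\co_K=|E(F)|\cdot\co_K,$$
using the general identity $\fa\overline{\fa}=N_{K/\bq}(\fa)\cdot\co_K$ and the fact that for a finite $\co_K$-module $M$ one has $N_{K/\bq}(|M|_K)=|M|\cdot\bz$. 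Restricting to $p$-parts yields the required identity.

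I do not foresee a genuine obstacle: the analytic and cohomological substance has already been packaged into Prop. \ref{keyabelian}, and the present step is a clean bookkeeping reduction from the dual pair $(A,{^t}\!A)$ to a single copy of $E$. The only point requiring care is the Rosati-involution identification of $E(F)$ with the conjugate of ${^t}\!E(F)$ as $\co_K$-modules, and tracking this through the $p$-primary order ideals; modulo this, the argument is a one-line substitution.
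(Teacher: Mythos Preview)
Your proposal is correct and follows essentially the same approach as the paper: both deduce the statement as the special case $A=E$, $L=K$ of Prop.~\ref{keyabelian}, and both reduce the denominator via the identity $|E(F)|_K\cdot|{^t}\!E(F)|_K=|E(F)|_K\cdot\overline{|E(F)|}_K=|E(F)|$. Your justification of this identity via the Rosati involution is slightly more detailed than the paper's, which simply records the equality.
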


\begin{proof}  This is the special case of Prop. \ref{keyabelian} where $A/F=E/F$ is an elliptic curve and $L=K$, noting that
$$|E(F)|_K\cdot |{^t}E(F)|_K=|E(F)|_K\cdot\overline{|E(F)|}_K=|E(F)|.$$

\comm{
First note that
$$ P_v(E/F,t)=\mydet_{K_l}(1-\Frob_v^{-1}\cdot t\vert H^1(E_{\bar{F}},\bq_l)^{I_v})=1-\bar{\psi}(v)\cdot t$$
and the $K$-equivariant L-function of $E/F$ agrees with $L(\bar{\psi},s)$.  By Prop. \ref{key} and a) we have
\begin{align*} \co_{K_p}\cdot \iota(z) =&  \iota\left(\mydet_{\co_{K_p}}^{-1} R\Gamma(\co_{F,S},T)\right)\\
=& \frac{|\Sha(E/F)|_{K_p}}{|E(F)|_{K_p} |{^t}\! E(F)|_{K_p}}\cdot \prod_v|\Phi_v|_{K_p}\cdot \prod_{v\in S} P_v(E/F,Nv^{-1})\cdot \Upsilon_p\\
=& \frac{|\Sha(E/F)|_{K_p}}{|E(F)|^2_{K_p} }\cdot \prod_v|\Phi_v|_{K_p}\cdot \prod_{v\in S} \left(1-\frac{\bar{\psi}(v)}{Nv}\right) \cdot \Upsilon_p
\end{align*}
and by Lemma \ref{periodlemma}
\begin{align*} & \co_{K_p}\cdot \mydet_{K_\br}(\mathrm{per})(\iota(z))\\
=& \frac{|\Sha(E/F)|_{K_p}}{|E(F)|^2_{K_p} }\cdot \prod_v|\Phi_v|_{K_p}\cdot \prod_{v\in S} \left(1-\frac{\bar{\psi}(v)}{Nv}\right) \cdot 
\Omega\cdot\mydet_{\co_K}\left(\prod\limits_{v\mid\infty}H^1(E(F_v),\bz)\right).
 \end{align*}
Comparing this identity with c) we find
$$ L_S(\bar{\psi},1)= \frac{|\Sha(E/F)|_{K_p}}{|E(F)|^2_{K_p} }\cdot \prod_v|\Phi_v|_{K_p}\cdot \prod_{v\in S} \left(1-\frac{\bar{\psi}(v)}{Nv}\right) \cdot 
\Omega$$
up to a factor in $K^\times$ prime to $p$. This is the statement of Proposition \ref{keyelliptic}.
}

\end{proof}

\section{Kato's reciprocity law}\label{sec:reciprocity}

In this section we recall some definitions and results of \cite{kato00}[\S 15] for which we need to introduce quite a bit of notation. Let $K$ be an imaginary quadratic field and fix an embedding $K\subset \bc$. We identify $\bar{K}=\bar{\bq}$ with the algebraic closure of $K$ in $\bc$. 

\subsection{Iwasawa modules}\label{sec:Iwasawa} For any ideal $\ff$ of $\co_K$ we denote by $$K(\ff)\subseteq\bar{K}$$ the ray class field of conductor $\ff$. For any prime number $p$ and ideal $\ff$ of  $\co_K$ we set
$$ K(p^\infty\ff)=\bigcup_n K(p^n \ff );\quad\quad G_{p^\infty \ff}:=\Gal(K(p^\infty\ff)/K).$$
Then
\begin{equation}G_{p^\infty \ff}\cong \Gamma\times \Delta;\quad \Gamma\simeq\bz_p\times\bz_p \label{deltadef}\end{equation}
where $\Delta:=G_{p^\infty \ff}^{tor}$ is a finite abelian group. Put 
$$\Lambda:=\bz_p[[\Gal(K(p^\infty\ff)/K)]]\simeq\bz_p[\Delta][[T_1,T_2]].$$
Consider the complex of $\Lambda$-modules
$$ R\Gamma_{p^\infty\ff}(\bz_p(1)):=\varprojlim_{K'} R\Gamma(\co_{K'}[\frac{1}{p}], \bz_p(1)) $$
where $K'$ runs through the finite extensions of $K$ contained in $K(p^\infty\ff)$. According to \cite{kato00}[15.6] the cohomology groups of $H^i_{p^\infty\ff}(\bz_p(1))$
are finitely generated $\Lambda$-modules and vanish for $i\neq 1,2$.

\subsection{Elliptic units} \label{sec:elliptic} In \cite{kato00}[15.5] there is defined an elliptic unit
\begin{equation} {_\fa}z_\ff\in\co_{K(\ff)}[1/\ff]^\times\label{elliptic}\end{equation}
for ideals $\fa,\ff$ such that $\co_K^\times\to(\co_K/\ff)^\times$ is injective and $(\fa,6\ff)=1$. If $\ff$ is not a power of a prime ideal then ${_\fa}z_\ff\in\co_{K(\ff)}^\times$. The units ${_\fa}z_\ff$ are norm compatible; in particular for any prime number $p$, any $n\geq 1$ and any nonzero ideal $\ff$ (such that $\co_K^\times\to(\co_K/p^n\ff)^\times$ is injective) one has
$$ N_{K(p^{n+1}\ff)/K(p^n\ff)}({_\fa}z_{p^{n+1}\ff})={_\fa}z_{p^n\ff}.$$
Denoting by $(\fa,F/K)\in\Gal(F/K)$ the Artin symbol, we have in particular an element 
$$\sigma_\fa:=(\fa,K(p^\infty\ff)/K)\in G_{p^\infty\ff}\subset\Lambda^\times.$$ 
Define
\begin{align*}z_{p^\infty\ff} :=& \left(N\fa-\sigma_\fa\right)^{-1}({_\fa}z_{p^n\ff})_{n\geq 1}\\
 \in &\left(\varprojlim_{K'} \co_{K'}[\frac{1}{p}]^\times\otimes_\bz\bz_p\right)\otimes_\Lambda Q(\Lambda)\simeq 
 H^1_{p^\infty\ff}(\bz_p(1))\otimes_\Lambda Q(\Lambda)\end{align*}
which is  independent of $\fa$.

\subsection{Hecke characters}\label{sec:hecke} Let $$\varphi:\ba_K^\times\to L^\times$$ be an algebraic Hecke character of $K$ with values in the number field $L$ and of infinity type $(-1,0)$. 
Following \cite{kato00}[15.8] we recall the definition of the motivic structure associated to $\varphi$. This consists of rank one $L$-vector spaces $V_L(\varphi)$ and $S(\varphi)$, a continuous $L_\csp$-linear $\Gal(\bar{\bq}/K)$-representation $V_{L_\csp}(\varphi)$ for each place $\csp\mid p$ of $L$ together with a (Deligne) period isomorphism
 $$ \mathrm{per}_\tau:S(\varphi)\otimes_{L,\tau}\bc\xrightarrow{\simeq} V_L(\varphi)\otimes_{L,\tau}\bc$$
for each embedding $\tau:L\to\bc$ and comparison isomorphisms
\begin{equation} V_{L_\csp}(\varphi)\simeq V_L(\varphi)\otimes_LL_\csp\label{artin}\end{equation}
as well as $p$-adic (Deligne) period isomorphisms
\begin{equation}D^1_{dR}(K_p,V_{L_\csp}(\varphi))\simeq S(\varphi)\otimes_LL_{\csp}\label{padicperiod}\end{equation}
for each $\csp\mid p$.

Let $\ff$ be a multiple of the conductor of $\varphi$ such that $\co_K^\times\to (\co_K/\ff)^\times$ is injective, and let $E=(E,\alpha)$ be the canonical CM-pair over $K(\ff)$  in the sense of \cite{kato00}[(15.3.1)], i.e. $E/K(\ff)$ is an elliptic curve with CM by $\co_K$ and $\alpha\in E(K(\ff))$ is a torsion point with annihilator $\ff$. As explained in \cite{kato00}[(15.3.3)]
if $\fa$ is an ideal prime to $\ff$ with Artin symbol $\sigma=(\fa,K(\ff)/K)\in\Gal(K(\ff)/K)$ there is a canonical isomorphism 
$$ \eta_\fa: (E/E[\fa], \alpha\  \mathrm{mod}\  E[\fa])\simeq (E^{(\sigma)},\sigma(\alpha)).$$ 
We denote by $\eta_\fa^*$ the map induced on cohomology by the composite isogeny $E\to E/E[\fa]\xrightarrow{\eta_\fa}E^{(\sigma)}$.
We then define
\begin{align*} V_L(\varphi):= & H^1(E(\bc),\bq)\otimes_KL\\
S(\varphi):= &(H^0(E,\Omega_{E/K(\ff)})\otimes_KL)^{\Gal(K(\ff)/K)}
\end{align*}
where $\sigma\in \Gal(K(\ff)/K)$ acts as the composite
$$ H^0(E,\Omega_{E/K(\ff)})\underset{K}{\otimes}L\xrightarrow{\sigma\otimes 1} H^0(E^{(\sigma)},\Omega_{E^{(\sigma)}/K(\ff)})\underset{K}{\otimes}L\xrightarrow[\simeq]{\varphi(\fa)^{-1}\eta_\fa^*}H^0(E,\Omega_{E/K(\ff)})\underset{K}{\otimes}L.$$
For each place $\csp\mid p$ of $L$ we define a $\Gal(\bar{\bq}/K)$-representation
$$V_{L_\csp}(\varphi):=  H^1_{et}(E\otimes_{K(\ff)}\bar{\bq},\bq_p)\otimes_{K_p}L_{\csp}$$
where $\sigma\in \Gal(\bar{\bq}/K)$ acts via 
$$V_{L_\csp}(\varphi)=H^1_{et}(E\otimes_{K(\ff)}\bar{\bq},\bq_p)\underset{K_p}{\otimes}L_{\csp}\xrightarrow{\sigma} H^1_{et}(E^{(\sigma)}\otimes_{K(\ff)}\bar{\bq},\bq_p)\underset{K_p}{\otimes}L_{\csp}\xrightarrow{\varphi(\fa)^{-1}\eta_\fa^*} V_{L_\csp}(\varphi).$$
Here $\fa$ an ideal such that $\sigma\vert_{K(\ff)}=(\fa,K(\ff)/K)$. The isomorphism (\ref{padicperiod}) is induced by the $p$-adic period isomorphism for $E/K(\ff)$ \cite{kato00}[(15.8.1)] and the isomorphism $\mathrm{per}_\tau$ is induced by the period isomorphism (\ref{periodiso}) for $E/K(\ff)$.
\begin{remark}
In the construction of the motivic structure the role of a torsion point $\alpha\in E(K(\ff))$ is to fix the isomorphism $\eta_{\fa}: E/E[\fa]\simeq E^{(\sigma_{\fa})}$.  
It induces the isogeny 
$E\to E/E[\fa]\xrightarrow{\eta_\fa}E^{(\sigma_\fa)}$ 
which is the only way $\eta_\fa$ enters into the construction. Note that the isogeny $E\rightarrow E^{(\sigma_{\fa})}$ is uniquely determined. 
\label{ind}\end{remark}

\subsection{The reciprocity law}\label{sec:erl} We state Kato's reciprocity law and then deduce its consequences for an elliptic curve $E/F$ as in Thm. \ref{main}.

\begin{prop} Let $\varphi$ be an algebraic Hecke character of $K$ with values in the number field $L$ and of infinity type $(-1,0)$. 
For an embedding $\tau:L\to\bc$ let  
$$\varphi_\tau:\ba_K^\times/K^\times\to\bc^\times$$
be the Hecke character deduced from $\varphi$ as in  (\ref{hecke}).
Let $\csp\mid p$ be any prime ideal of $\co_L$, $\ff$ a multiple of the conductor of $\varphi$ and $\gamma\in V_L(\varphi)$. Then the image $z_{p^\infty\ff}(\gamma)'$ of $z_{p^\infty\ff}$ under
\begin{align*} &H^1_{p^\infty\ff}(\bz_p(1))\xrightarrow{\gamma}H^1_{p^\infty\ff}(\bz_p(1))\otimes V_{L_\csp}(\varphi)\simeq H^1_{p^\infty\ff}(V_{L_\csp}(\varphi)(1))\\
\to & H^1(\co_K[\frac{1}{p}],V_{L_\csp}(\varphi)(1)) \xrightarrow{\exp^*}D^1_{dR}(K_p,V_{L_\csp}(\varphi))\simeq S(\varphi)\otimes_LL_{\csp}
 \end{align*}
 is an element of $S(\varphi)$. Moreover
 $$\mathrm{per}_\tau(z_{p^\infty\ff}(\gamma)')= L_{p\ff}(\bar{\varphi}_\tau,1)\cdot\gamma.$$
\label{reciprocity}\end{prop}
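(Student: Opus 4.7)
The plan is to deduce the statement directly from Kato's formulation of the explicit reciprocity law in \cite{kato00}[Prop. 15.9], which in turn rests on the classical reciprocity laws of Wiles \cite{wiles78} and Coates--Wiles \cite{cw78} for formal groups of CM elliptic curves. The first step is to unwind the definitions of $V_{L_\csp}(\varphi)$ and $S(\varphi)$ in terms of the canonical CM pair $(E,\alpha)/K(\ff)$: both are twisted descents from $K(\ff)$ to $K$ of the \'etale, respectively de Rham, cohomology of $E$, with $\Gal(K(\ff)/K)$ acting through $\varphi(\fa)^{-1}\eta_\fa^*$. In particular, the $p$-adic period isomorphism (\ref{padicperiod}) is the descended version of the de Rham comparison for $E/K(\ff)$, and the dual exponential in the proposition is compatible, through this descent, with the dual exponentials $\exp^*_v$ on $H^1(K(\ff)_v,\bq_p(1))$ for each $v\mid p$.

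Once this reduction is made, the heart of the argument is a computation on the elliptic curve $E$. The Iwasawa element $z_{p^\infty\ff}=(N\fa-\sigma_\fa)^{-1}({_\fa}z_{p^n\ff})_n$ is a norm-compatible family of elliptic units, and Kummer theory sends it to an explicit class in $\varprojlim_n H^1(\co_{K(p^n\ff)}[\tfrac{1}{p}],\bz_p(1))$. The core fact is that on the formal group of $E$ at a prime above $p$ this norm-compatible system is interpolated by a Coleman power series, namely (a variant of) the Kato--Siegel theta function, and $\exp^*_v$ of the corresponding class equals the logarithmic derivative of this Coleman series evaluated at the identity of the formal group. This is the content of the Wiles/Coates--Wiles explicit reciprocity law. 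Applied here, it produces an element of $H^0(E,\Omega_{E/K(\ff)})\otimes V_L(\varphi)\otimes_{L}L_\csp$ whose descent under the twisted $\Gal(K(\ff)/K)$-action lies in $S(\varphi)\otimes_LL_\csp$; indeed, the very factor $(N\fa-\sigma_\fa)^{-1}$ in the definition of $z_{p^\infty\ff}$ is precisely what renders the descended class independent of the auxiliary ideal $\fa$ (cf.\ Remark \ref{ind}), yielding the first assertion.

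For the second assertion, one evaluates $\mathrm{per}_\tau$ on the resulting form. Damerell's theorem --- equivalently, integration of the theta function against a Betti cycle via the Kronecker limit formula --- identifies the pairing of the logarithmic derivative of the Kato--Siegel function with $\gamma$ as the imprimitive Hecke $L$-value $L_{p\ff}(\bar{\varphi}_\tau,1)$, with the Euler factors at primes dividing $p\ff$ (but not $\fa$) naturally removed. The main obstacle is the careful bookkeeping of normalizations across the Betti, de Rham and \'etale realizations: matching the twist $\varphi(\fa)^{-1}\eta_\fa^*$ in the definition of the motivic structure with the Euler factor $(N\fa-\sigma_\fa)^{-1}$ built into $z_{p^\infty\ff}$, reconciling the semilinear Galois descent from $K(\ff)$ to $K$ with the Artin comparison (\ref{artin}), and keeping track of which local Euler factors are suppressed in $L_{p\ff}$. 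This is precisely the bookkeeping carried out by Kato in \cite{kato00}[\S15], so I would ultimately quote his result rather than redo the computation.
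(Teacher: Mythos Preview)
Your proposal is correct and matches the paper's approach: the paper's proof consists of the single sentence ``This is the special case of \cite{kato00}[Prop.~15.9] where $\varphi$ has infinity type $(-1,0)$ and where $K'=K$.'' Your exposition unpacks what lies behind Kato's result, but since you too conclude by quoting \cite{kato00}[Prop.~15.9], the argument is the same.
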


\begin{proof} This is the special case of \cite{kato00}[Prop. 15.9] where $\varphi$ has infinity type $(-1,0)$ and where $K'=K$.
\end{proof}

\begin{remark} Prop. \ref{reciprocity} includes Deligne's period conjecture \cite{deligne79} for the algebraic Hecke character $\bar{\varphi}$. If $b$ is an $L$-basis of $S(\varphi)$ and
$$\Omega=\Omega(b,\gamma)=(\Omega_\tau)\in L_\br^\times$$ is such
$\mathrm{per}_\tau(b)=\Omega_\tau\cdot\gamma$ for all $\tau$ then
$$\frac{L_{p\ff}(\bar{\varphi},1)}{\Omega}\in L\subseteq  L_\br.$$
In particular, if $L_{p\ff}(\bar{\varphi}_{\tau_0},1)\neq 0$ for one $\tau_0$ then
$$L_{p\ff}(\bar{\varphi}_\tau,1)\neq 0$$ for all $\tau\in\Hom(L,\bc)$.
Deligne's period conjecture in the situation of Prop. \ref{reciprocity} was proven in \cite{gos} and is known for all algebraic Hecke characters of all number fields (see \cite{schappacher}[Ch. II, Thm. 2.1] and references therein. The proof for non-CM base fields was recently completed in \cite{kufner}).
\label{rem:algebraic}\end{remark}

Recall the following proposition from \cite{gos}[Thm. 4.1]

\begin{prop} Let $E/F$ be an elliptic curve over a number field $F$ with complex multiplication by the ring of integers in an imaginary quadratic field $K$. Then $K\subseteq F$ and the following are equivalent
\begin{itemize}
\item[a)] $F(E_{tors})/K$ is an abelian extension of $K$.
\item[b)] The abelian variety $B:=\Res_{F/K} E$ has complex multiplication over $K$ in the sense that
$$ L:=\End_K(B)\otimes\bq\simeq  L_1\times\cdots\times L_r$$
where $L_1,\dots,L_r$ are CM fields containing $K$ such that
$$ [L:K]=\sum_{i=1}^r[L_i:K]=[F:K] (=\dim B).$$
\item[c)] The extension $F/K$ is abelian and there exists an algebraic Hecke character $\eta$ of $K$ so that
$$ \psi=\eta\circ N_{F/K} $$
where $\psi$ is the algebraic Hecke character of $F$ associated to $E/F$.
\end{itemize}
\label{CM}\end{prop}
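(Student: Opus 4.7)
The plan is to first establish the containment $K\subseteq F$, then prove the three equivalences cyclically a)$\Rightarrow$b)$\Rightarrow$c)$\Rightarrow$a), using class field theory, properties of Weil restriction, and the Serre-Tate criterion recognizing CM abelian varieties by the abelianness of the Galois action on their Tate modules.

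For $K\subseteq F$: Since $\co_K$ acts on $E/F$ by $F$-rational endomorphisms, it acts $F$-linearly on the 1-dimensional $F$-vector space $\Lie(E/F)$, which yields an injection $K\hookrightarrow F$.

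For a)$\Rightarrow$b): The Tate module $V_p(B)$ of the Weil restriction $B=\Res_{F/K}E$ is the induced $G_K$-representation $\Ind_{G_F}^{G_K}V_p(E)$. Since the $G_F$-representation $V_p(E)$ is abelian (it factors through $\psi$ on the Tate module), and since by hypothesis $F(E_{tors})/K$ is abelian, the image of $G_K$ in $\Aut(V_p(B))$ is abelian. Serre-Tate's criterion then says that $B/K$ has potentially abelian image on the Tate module, hence $B$ acquires CM over $K$ (see \cite{serretate}). The endomorphism algebra $L=\End_K(B)\otimes\bq$ is a product of CM fields $L_i$ each containing $K$, and the dimension count $\sum[L_i:K]=[F:K]$ follows from the structure of $V_p(B)$ as an abelian $G_K$-module of rank $[F:K]$ over $K_p$.

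For b)$\Rightarrow$c): If $B/K$ has CM by $L$ in the sense stated, then by Shimura-Taniyama the field $K(B_{tors})/K$ is abelian. The extension $F/K$ is recovered as the field of definition of the CM structure on the $E$-factor, and is abelian inside $K(B_{tors})/K$. Moreover $F(E_{tors})=K(B_{tors})\cdot F$ since $B_{\bar K}\sim \prod_{\sigma\in\Gal(F/K)}E^\sigma_{\bar K}$ makes $B_{tors}$ generate $E_{tors}^\sigma$ for all embeddings, so $F(E_{tors})/K$ is abelian. One then produces the Hecke character $\eta$ of $K$: the character $\psi$ of $F$ is an algebraic Hecke character of infinity type $(-1,0)$ (with respect to one embedding); the condition that $\psi$ be invariant under $\Gal(F/K)$ (which follows from the abelianness of $F(E_{tors})/K$ via reciprocity) together with Weil's theorem on algebraic Hecke characters forces $\psi$ to be a norm, $\psi=\eta\circ N_{F/K}$.

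For c)$\Rightarrow$a): This is the direct implication via class field theory. Given $\psi=\eta\circ N_{F/K}$, the Galois representation on $T_p(E)$, which by CM theory is determined by the $p$-component of $\psi$, factors for each $\sigma\in\Gal(\bar K/K)$ through $\eta$ applied to the global Artin symbol. Since $\eta$ is a character of $\ba_K^\times/K^\times$, its image generates an abelian extension of $K$, and $F/K$ is abelian by assumption, so $F(E_{tors})/K$, being the compositum of abelian extensions of $K$, is abelian. The main obstacle is the rigorous step in b)$\Rightarrow$c): upgrading the Galois-invariance of $\psi\vert_{F}$ (which follows formally from abelianness of $F(E_{tors})/K$) to the existence of a global algebraic Hecke character $\eta$ of $K$ whose norm is $\psi$; this relies on the descent theory of algebraic Hecke characters and the explicit description of the infinity type of $\psi$ via the CM type of $E/F$ restricted to $K$.
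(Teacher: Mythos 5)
The paper does not prove this proposition; it simply invokes it via the citation to Goldstein--Schappacher, \cite{gos}[Thm.\ 4.1]. So the comparison is against the standard proof in that reference rather than an argument in this paper.

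Your overall cyclic structure and the derivation of $K\subseteq F$ via the $F$-linear action of $\co_K$ on the one-dimensional $\Lie(E/F)$ are fine. However there are two places where the argument, as written, rests on a citation that does not quite say what you want.

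First, in a)$\Rightarrow$b) you invoke ``Serre-Tate's criterion'' to pass from abelian Galois action on $T_p(B)$ to CM over $K$ with the stated dimension count. The reference \cite{serretate} (``Good reduction of abelian varieties'') attaches a Hecke character to an abelian variety \emph{already known} to have CM; it does not provide the converse. What you actually need is that a semisimple abelian $p$-adic Galois representation attached to an abelian variety over a number field is ``of CM type'', which rests on Faltings' theorem (to identify the commutant with $\End_K(B)\otimes\bq_p$) together with Serre's theory of abelian semisimple $\ell$-adic representations (to show the commutant is commutative of full rank $2\dim B$ and cut out by a CM algebra). The precise shape $L\simeq L_1\times\cdots\times L_r$ with each $L_i$ a CM field \emph{containing $K$} and $\sum[L_i:K]=[F:K]$ then needs the explicit description $\End_K(B)\otimes\bq\simeq\bigoplus_{\sigma\in\Gal(F/K)}\Hom_F(E^\sigma,E)\otimes\bq$, which is not addressed.

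Second, in b)$\Rightarrow$c) the step ``Weil's theorem on algebraic Hecke characters forces $\psi$ to be a norm'' is not a theorem one can cite off the shelf. Galois-invariance of $\psi$ alone does \emph{not} imply $\psi$ factors through $N_{F/K}$; that is a strictly stronger statement. The correct route is class field theory: the representation $G_F\to\hat\co_K^\times$ on $T(E)$ cuts out $F(E_{tors})$, and the assertion that this extension is abelian over $K$ is equivalent to the statement that this character of $G_F^{ab}$ descends through the canonical surjection $G_F^{ab}\to G_K^{ab}$, which translates (via the compatibility $\mathrm{rec}_K\circ N_{F/K}=\pi\circ\mathrm{rec}_F$) into: the finite part of $\psi$ is trivial on $\ker(N_{F/K}:\ba_{F,f}^\times/F^\times\to\ba_{K,f}^\times/K^\times)$. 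You then still must produce an \emph{algebraic} Hecke character $\eta$ of $K$ of the correct infinity type with $\psi=\eta\circ N_{F/K}$; extending a character from the image of the norm to all of $\ba_K^\times/K^\times$ while retaining algebraicity is the genuine content of the Goldstein--Schappacher argument and is where the subtlety lies. You flag this yourself at the end, so you are aware of the gap -- but the paragraph as written presents it as already settled by ``Weil's theorem'', which it is not.

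Your c)$\Rightarrow$a) is essentially correct, and your observation that $F\subseteq K(B_{tors})$ (since $K(B[p^n])$ is the compositum of the $F^\sigma(E^\sigma[p^n])$) is the right way to show $F/K$ is abelian in b)$\Rightarrow$c). Note however that this step proves b)$\Rightarrow$a) en route; combined with the class-field-theoretic argument that a) is equivalent to the descent condition on $\psi$, one could streamline the whole proof into ``b)$\Rightarrow$a)$\Leftrightarrow$(descent of $\psi$)$\Rightarrow$c)$\Rightarrow$a)$\Rightarrow$b)'', making the Serre-Tate type input appear only once.
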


To the CM abelian variety $B/K$ is attached a $L$-valued Serre-Tate character \cite{serretate}[Thm. 10]
$$ \varphi=(\varphi_1,\dots,\varphi_r): \ba_K^\times \to L^\times,\quad \varphi\vert_{K^\times}=K^\times\xrightarrow{i}L^{\times}$$
where $\varphi_j$ is the Serre-Tate character of the simple isogeny factor $B_j$ of $B$ with endomorphism algebra $L_j$. Here $i$ is the inclusion.

\begin{prop} In the situation of Prop. \ref{CM} there are isomorphisms of free rank one $L$-modules
\begin{align*} V_{L}(\varphi):=V_{L_1}(\varphi_1)\times\cdots\times V_{L_r}(\varphi_r)\simeq \,& H^1(B(\bc),\bq)\\
S(\varphi):=S(\varphi_1)\times\cdots\times S(\varphi_r)\simeq\,  &H^0(B,\Omega_{B/K})
\end{align*}
so that the diagram of free rank one $L_\br$-modules
$$\begin{CD} 
\prod\limits_{\tau\in\Hom_K(L,\bc)} S(\varphi)\otimes_{L,\tau}\bc @>\simeq>> S(\varphi)_\br @>\simeq >>H^0(B,\Omega_{B/K})_\br\\
@V\prod_\tau \mathrm{per}_{\tau}VV @. @VV\mathrm{per}_B V\\
\prod\limits_{\tau\in\Hom_K(L,\bc)} V_{L}(\varphi)\otimes_{L,\tau}\bc @>\simeq>> V_{L}(\varphi)_\br @>\simeq>> H^1(B(\bc),\bq)_\br 
\end{CD}$$
commutes where $\mathrm{per}_B$ was defined in (\ref{periodiso}). Moreover, for each prime number $p$ there is a $\Gal(\bar{\bq}/K)$-equivariant isomorphism of free rank one $L_p$-modules
$$ V_p(\varphi):=\prod_{\csp\mid p}V_{L_{1,\csp}}(\varphi_1)\times\cdots\times \prod_{\csp\mid p}V_{L_{r,\csp}}(\varphi_r)\simeq H^1_{et}(B\otimes_{K}\bar{\bq},\bq_p)$$
compatible with (\ref{artin}) and the Artin comparison isomorphism for $B$. Finally, the $p$-adic (Deligne) period isomorphism for $B$ is compatible with (\ref{padicperiod}).
\label{bcomp}\end{prop}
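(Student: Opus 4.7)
The plan is to use the isogeny decomposition of $B$ to reduce to the simple case, and then identify Kato's motive and the realizations of $B$ as different avatars of the same Hecke character $\varphi$.

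By Prop. \ref{CM}(b) there is a $K$-isogeny decomposition $B\sim B_1\times\cdots\times B_r$ in which each $B_j$ is simple with $\End_K(B_j)\otimes\bq=L_j$ and Serre-Tate character $\varphi_j$. This induces $L$-equivariant decompositions of $H^1(B(\bc),\bq)$, $H^0(B,\Omega_{B/K})$, and $H^1_{et}(B\otimes_K\bar{\bq},\bq_p)$ as products of the corresponding objects for the $B_j$. By the very definitions of $V_L(\varphi)$, $S(\varphi)$, and $V_p(\varphi)$ in the statement, it therefore suffices to construct the isomorphisms and verify their compatibilities for each simple factor, and we drop the subscript $j$.

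For a simple CM abelian variety $B/K$ with Serre-Tate character $\varphi$ of infinity type $(-1,0)$, the objects under comparison are free rank-one $L$- or $L_p$-modules. Since $K\subset L$ and $\dim B=[L:K]$, the $\co_K$-action on $B$ exhibits $B$ as a CM abelian variety with CM by $\co_K$ whose CM type is the restriction of the CM type of $L$. Consequently, over $K(\ff)$ for $\ff$ a sufficiently deep conductor, there is an isogeny
$$\pi:B_{K(\ff)}\to E^{[L:K]}$$
intertwining the $\co_K$-action on $B_{K(\ff)}$ with the diagonal $\co_K$-action on $E^{[L:K]}$, arising from a splitting $\co_L\otimes_{\co_K}\co_{K(\ff)}\simeq\co_{K(\ff)}^{[L:K]}$. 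The pullback $\pi^*$ on Betti, de Rham, and \'etale cohomology gives $L$-linear isomorphisms with $H^1(E(\bc),\bq)\otimes_K L$, $H^0(E,\Omega_{E/K(\ff)})\otimes_K L$, and $H^1_{et}(E\otimes_{K(\ff)}\bar\bq,\bq_p)\otimes_{K_p} L_p$ respectively. By the Shimura-Taniyama reciprocity law (\cite{serretate}[Thm. 10]), the twisted $\Gal(\bar\bq/K)$-action of Section \ref{sec:hecke} encoding $\varphi$ is precisely the descent datum from $K(\ff)$ to $K$ transported through $\pi^*$, yielding the required identifications of Betti, de Rham, and \'etale realizations and the compatibility of the $p$-adic realization with (\ref{artin}).

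The commutativity of the period diagram, together with the compatibility of (\ref{padicperiod}) with the $p$-adic period isomorphism for $B$, follow from the naturality of these constructions under $\pi$: every period and comparison map on the Kato side is built from the corresponding map for $E/K(\ff)$, and these are compatible with $\pi^*$. The main obstacle is to rigidify $\pi$ so that this transport matches Kato's explicit twist $\varphi(\fa)^{-1}\eta_\fa^*$; by Remark \ref{ind} the isogeny $E\to E^{(\sigma_\fa)}$ appearing in that twist is uniquely determined, so the matching is canonical up to an $L^\times$-scalar which is absorbed in the free choice of $L$-basis on either side.
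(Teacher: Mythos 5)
Your overall strategy agrees with the paper's: reduce to the simple isogeny factors $B_i$ furnished by Prop.~\ref{CM}(b), realize each $B_i$ as a Galois descent of a product of copies of $E$ over an abelian extension of $K$, and transport Kato's motivic structure through that identification. However, there are two places where your argument as written does not quite go through, and where the paper does genuinely more work.

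First, the isogeny $\pi:B_{K(\ff)}\to E^{[L:K]}$ which you say ``arises from a splitting $\co_L\otimes_{\co_K}\co_{K(\ff)}\simeq\co_{K(\ff)}^{[L:K]}$'' is problematic: such a splitting requires $L$ to embed into $K(\ff)$, i.e.\ $L/K$ to be abelian. But Prop.~\ref{CM}(b) only says the $L_i$ are CM fields containing $K$ with $\sum_i[L_i:K]=[F:K]$; there is no abelianness hypothesis, and indeed $L$ can fail to be abelian over $K$ (e.g.\ in the Hilbert class field situations considered in \cite{gross1} and \cite{bugross85}). The paper avoids this by never splitting $\co_L$: it works with the $F$-form $\tilde{B}_i=\prod_{\tau\in G_i}E^{(\tau)}$ and uses the free rank-one $L_i$-module structure on its cohomology coming from the $\co_L$-equivariant descent datum, without needing $L_i\subset K(\ff)$. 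If you simply drop the splitting and declare $\pi$ to be \emph{any} $F$-isogeny $\tilde{B}_i\sim E^{|G_i|}$ (which exists because $\psi=\eta\circ N_{F/K}$ forces $\psi^\sigma=\psi$, hence $E^{(\sigma)}\sim_F E$), the difficulty disappears, but then you are back to the paper's setup.

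Second, the crux of the proposition --- that the $G_K$-action on the realizations of $B_i$ obtained from the descent datum matches the twisted action $\sigma\mapsto\varphi_i(\fa)^{-1}\eta_\fa^*\circ\sigma$ defining $V_{L_{i,\csp}}(\varphi_i)$ and $S(\varphi_i)$ in \S\ref{sec:hecke} --- is asserted in your proposal (``by the Shimura--Taniyama reciprocity law\ldots the twisted action is precisely the descent datum'') but not verified. The paper spends the bulk of the proof on exactly this: it invokes the main theorem of complex multiplication in the form \cite{cco}[Thm.~A.2.7, A.3.4] to write the descent cocycle as $c(\sigma)=\theta_{\sigma,s_\sigma}\circ\varphi_i(s_\sigma)^{-1}$ and then identifies this with Kato's twist after observing (Remark~\ref{ind}) that the isogeny $E\to E^{(\sigma_\fa)}$ is uniquely determined by $\sigma_\fa$ alone. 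Your appeal to Remark~\ref{ind} and the $L^\times$-ambiguity points in the right direction, but one still has to check the twist factor $\varphi(\fa)^{-1}$ literally appears; that is the content of \eqref{dsc}. So: right idea, but the argument as stated has a genuine gap at the splitting step and leaves the central identification unproved.
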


\begin{proof}
The following is based on the construction of $B/K$ via Galois descent. 
Put $G=\Gal(F/K)$ and define an abelian variety 
$$
\tilde{B}=\prod_{\sigma' \in G} E^{(\sigma')}
$$
for $E^{(\sigma')}$ the Galois conjugate. An element $\sigma \in G$ induces an isomorphism $E^{(\sigma')}\simeq E^{(\sigma\circ \sigma')}$ which leads to $$\phi_{\sigma}: \tilde{B} \simeq \tilde{B}^{(\sigma)}.$$
Note that $(\tilde{B},(\phi_{\sigma})_{\sigma \in G})$ is an effective descent datum, $B/K$ being the descent. 
One has
$$
\End_{F}(\tilde{B})^{G}=\prod_{\sigma'\in G}\Hom_{F}(E,E^{(\sigma')})
$$
and accordingly Prop. \ref{CM} b) gives a partition of 
$G$ by the indices $\{1,...,r\}$. 
Let $G_{i}$ denote the subset of elements associated to an index $i$. 
For each $i$ define an abelian variety  
\begin{equation}\label{partial}
\tilde{B}_i=\prod_{\tau\in G_{i}} E^{(\tau)}.
\end{equation}
The descent datum on $\tilde{B}$ induces a datum on $\tilde{B}_i$, let $B_i/K$ denote the descent. 
Note that there is an isogeny 
\begin{equation}\label{iso}
B \rightarrow \prod_{i=1}^{r} B_i
\end{equation}
over $K$. 
The main theorem of complex multiplication leads to the following description of the descent datum on $\tilde{B}_i$. 
Let $\co_i\subset L_i$ denote the endomorphism ring of $B_i$. 
For $\sigma \in G$ 
pick $s_{\sigma} \in \ba_{K,f}^\times$ with ${\rm rec}_{K}(s_{\sigma})=\sigma$
where 
$${\rm rec}_{K}: \ba_{K}^{\times}/K^{\times}\simeq \Gal(K^{ab}/K)\twoheadrightarrow G$$
 is the Artin map normalised so that  uniformisers map to lifts of the the arithmetic Frobenius. 
 By the main theorem of complex multiplication 
 \cite{cco}[Thm.~A.2.7]
 there is a unique $L_i$-linear isomorphism of abelian varieties 
$$
\theta_{\sigma,s_{\sigma}}: \tilde{B}_i\otimes_{\co_i} I_{s_{\sigma}}
\simeq \tilde{B}_i^{(\sigma)} 
$$
for $I_{s_{\sigma}}$ the principal fractional $\co_i$-ideal 
generated by $\varphi_i(s_{\sigma})^{-1}\in L_i^\times$. 
The composite 
\begin{equation}\label{dsc}
c(\sigma): \tilde{B}_i \stackrel{\varphi_i(s_{\sigma})^{-1}}{\simeq} \tilde{B}_i\otimes_{\co_i} I_{s_{\sigma}} \stackrel{\theta_{\sigma,s_{\sigma}}}{\simeq} \tilde{B}_i^{(\sigma)}
\end{equation}
is an $L_i$-linear $F$-isomorphism. 
For varying $\sigma$ the isomorphisms $c(\sigma): \tilde{B}_i\simeq \tilde{B}_i^{(\sigma)}$ induce an 
$\co_i$-linear descent datum on $\tilde{B}_i$ with respect to $G$, which is compatible with the preceding datum \cite{cco}[A.3.4] (see also \cite{shimura71}[p. 513]). 

Note that the motivic structure associated to a Hecke character $\varphi$ as in section \ref{sec:hecke} may be defined via a CM pair $(E',\alpha')$ where $E'/F'$ is an elliptic curve as in Prop. \ref{CM} and $\alpha'\in E'(\tilde{F}')$ is a torsion point with annihilator $\ff$ a multiple of the conductor of $\varphi$ and 
$\tilde{F}'/K$ an abelian extension containing $F'$. 
The resulting motivic structure is independent of the choice \cite{kato00}[p. 257].
In light of Remark \ref{ind} the elliptic curve $E'/F'$ along with the isogeny $E'\rightarrow E^{'\sigma_{\fa}}$ for 
$\sigma_{\fa}\in\Gal(F'/K)$ give rise to the motivic structure. 
In the following we may thus consider an elliptic curve $E^{(\tau)}/F$ as above for $\tau\in G_i$. 
By definition 
\begin{equation}\label{cp-1}
H^{1}(B_i(\bc),\bq)=H^{1}(\tilde{B}_i(\bc),\bq)\simeq H^{1}(E^{(\tau)}(\bc),\bq)\otimes_{K} L_i=V_{L_i}(\varphi_i).
\end{equation}
As for the de Rham realisation $S(\varphi_i)$ first note 
$$H^{0}(\tilde{B}_i,\Omega_{\tilde{B}_i/F})\simeq H^0(E^{(\tau)},\Omega_{E^{(\tau)}/F})\otimes_{K} L_i$$
since the endomorphism ring of $\tilde{B}_i$ is an order in $L_i$ and \eqref{partial}.
In light of the construction of $B_i/K$ observe $H^{0}(B_i,\Omega_{B_i/K})$ is the fixed part of the $\Gal(F/K)$-action on 
$H^{0}(\tilde{B}_i,\Omega_{\tilde{B}_i/F})$ arising from the descent datum \eqref{dsc}. From the above description the action coincides with the $\Gal(F/K)$-action on $H^0(E^{(\tau)},\Omega_{E^{(\tau)}/F})\otimes_{K} L_i$ as in section \ref{sec:hecke}. Hence one has 
\begin{equation}\label{cp-2}
H^{0}(B_i,\Omega_{B_i/K})\simeq S(\varphi_i).
\end{equation}
In the same vein the construction induces an isomorphism of $L_{i,\csp}[G_{K}]$-modules 
\begin{equation}\label{cp-3}
H^{1}_{et}(B_i\otimes_{K}\bar{K}_{w},\bq_{p})\otimes_{L_i\otimes\bq_{p}}L_{i,\csp} \simeq V_{L_{i,\csp}}(\varphi_i). 
\end{equation}
Under the isomorphisms \eqref{cp-1}, \eqref{cp-2} and \eqref{cp-3} note that 
 the period maps ${\rm{per}}_\tau$ and \eqref{padicperiod} as in section \ref{sec:hecke} correspond to the period maps
$$
{\rm{per}_{B_i}}: H^{0}(B_i,\Omega_{B_i/K}) \rightarrow H^{1}(B_i(\bc),\bc)
$$
and 
$$
D_{dR}(L_i\otimes \bq_{p},H^{1}_{et}(B_i\otimes_{K}\bar{K}_{v},\bq_{p})\otimes_{L_{i}\otimes\bq_{p}}L_{i,\csp})\simeq H^{1}_{dR}(B_{\tau}/K_{v}) \otimes_{L_{i}\otimes\bq_{p}} L_{i, \csp} 
$$
respectively. In view of the isogeny \eqref{iso} the proof concludes. 
\end{proof}

\comm{
\begin{proof}
The following is based on the construction of $B/K$ via Galois descent. 
Put $G=\Gal(F/K)$ and define an abelian variety 
$$
\tilde{B}=\prod_{\sigma' \in G} E^{(\sigma')}
$$
for $E^{(\sigma')}$ the Galois conjugate. An element $\sigma \in G$ induces an isomorphism $E^{(\sigma')}\simeq E^{(\sigma\circ \sigma')}$ which leads to $$\phi_{\sigma}: \tilde{B} \simeq \tilde{B}^{(\sigma)}.$$
Note that $(\tilde{B},(\phi_{\sigma})_{\sigma \in G})$ is an effective descent datum, $B/K$ being the descent. 
One has
$$
\End_{F}(\tilde{B})^{G}=\prod_{\sigma'\in G}\Hom_{F}(E,E^{(\sigma')})
$$
and accordingly Prop. \ref{CM} c) gives a partition of 
$G$ by the indices $\{1,...,r\}$. 
Let $G_{i}$ denote the subset of elements associated to an index $i$. 
For $\tau\in\Hom_{K}(L,\bc)$ define an abelian variety  
$$
\tilde{B}_{\tau}=\prod_{\tau'\in G_{i(\tau)}} E^{(\tau')}.
$$
The descent datum on $\tilde{B}$ induces a datum on $\tilde{B}_\tau$, let $B_{\tau}/K$ denote the descent. 
Note that there is an isogeny 
\begin{equation}\label{iso}
B \rightarrow \prod_{i=1}^{r} B_{\tau_{i}}
\end{equation}
over $K$. 
The main theorem of complex multiplication leads to the following description of the descent datum on $\tilde{B}_\tau$. 
Let $\co_{i(\tau)}\subset L_{i(\tau)}$ denote the endomorphism ring of $B_{\tau}$. 
For $\sigma \in G$ 
pick $s_{\sigma} \in \ba_{K,f}^\times$ with $rec_{K}(s_{\sigma})=\sigma$
where $rec_{K}: \ba_{K}^{\times}/K^{\times}\simeq \Gal(K^{ab}/K)\twoheadrightarrow G$ is the Artin map normalised so that  uniformisers map to lifts of the the arithmetic Frobenius. 
 By the main theorem of complex multiplication 
 \cite{cco}[Thm.~A.2.7]
 there is a unique $L_{i(\tau)}$-linear isomorphism of abelian varieties 
$$
\theta_{\sigma,s_{\sigma}}: \tilde{B}_{\tau}\otimes_{\co_{i(\tau)}} I_{s_{\sigma}}
\simeq \tilde{B}_{\tau}^{(\sigma)} 
$$
for $I_{s_{\sigma}}$ the principal fractional ideal 
of $\co_{i(\tau)}$ generated by $\varphi_{\tau}(s_{\sigma})^{-1}\in L_{i(\tau)}^\times$. 
The composite 
\begin{equation}\label{dsc}
c(\sigma): \tilde{B}_{\tau} \stackrel{\varphi_{\tau}(s_{\sigma})^{-1}}{\simeq} \tilde{B}_{\tau}\otimes_{\co_{i(\tau)}} I_{s_{\sigma}} \stackrel{\theta_{\sigma,s_{\sigma}}}{\simeq} \tilde{B}_{\tau}^{(\sigma)}
\end{equation}
is an $L_{i(\tau)}$-linear $F$-isomorphism. 
For varying $\sigma$ the isomorphisms $c(\sigma): \tilde{B}_{\tau}\simeq \tilde{B}_{\tau}^{(\sigma)}$ induce an 
$\co_{i(\tau)}$-linear descent datum on $\tilde{B}_{\tau}$ with respect to $G$, which is compatible with the preceding datum \cite{cco}[A.3.4] (see also \cite{shimura71}[p. 513]). 

Note that the motivic structure associated to a Hecke character $\varphi$ as in section \ref{sec:hecke} maybe defined via a CM pair $(E',\alpha')$ where $E'/F'$ is an elliptic curve as in Prop. \ref{CM} and $\alpha'\in E'(\tilde{F}')$ is a torsion point with annihilator $\ff$ a multiple of the conductor of $\varphi$ and 
$\tilde{F}'/K$ an abelian extension containing $F'$. 
The resulting motivic structure is independent of the choice \cite{kato00}[p. 257].
In light of Remark \ref{ind} the elliptic curve $E'/F'$ along with the isogeny $E'\rightarrow E^{'\sigma_{\fa}}$ for 
$\sigma_{\fa}\in\Gal(F'/K)$ give rise to the motivic structure. 
In the following we may thus consider an elliptic curve $E^{(\tau')}/F$ as above for $\tau'\in G_{i(\tau)}$. 
By definition 
\begin{equation}\label{cp-1}
H^{1}(B_{\tau}(\bc),\bq)=H^{1}(\tilde{B}_{\tau}(\bc),\bq)\simeq H^{1}(E^{(\tau')}(\bc),\bq)\otimes_{K} L_{i(\tau)}=V_{L_{i(\tau)}}(\varphi_{\tau}).
\end{equation}
As for the de Rham realisation $S(\varphi_{\tau})$ first note 
$$H^{0}(\tilde{B}_{\tau},\Omega_{\tilde{B}_{\tau}/F})\simeq H^0(E^{(\tau')},\Omega_{E^{(\tau')}/F})\otimes_{K} L_{i(\tau)}.$$
In light of the construction of $B_{\tau}/K$ observe $H^{0}(B_{\tau},\Omega_{B_{\tau}/K})$ is the fixed part of the $\Gal(F/K)$-action on 
$H^{0}(\tilde{B}_{\tau},\Omega_{\tilde{B}_{\tau}/F})$ arising from the descent datum \eqref{dsc}. From the above description the action coincides with the $\Gal(F/K)$-action on $H^0(E^{(\tau')},\Omega_{E^{(\tau')}/F})\otimes_{K} L_{i(\tau)}$ as in section \ref{sec:hecke}. Hence one has 
\begin{equation}\label{cp-2}
H^{0}(B_{\tau},\Omega_{B_{\tau}/K})\simeq S(\varphi).
\end{equation}
In the same vein the construction induces an isomorphism of $L_{i(\tau),\csp}[G_{K}]$-modules 
\begin{equation}\label{cp-3}
H^{1}_{et}(B_{\tau}\otimes_{K}\bar{K}_{w},\bq_{p})\otimes_{L_{i(\tau)}\otimes\bq_{p}}L_{i(\tau),\csp} \simeq V_{L_{i(\tau),\csp}}(\varphi_{\tau}). 
\end{equation}
Under the isomorphisms \eqref{cp-1}, \eqref{cp-2} and \eqref{cp-3} note that 
 the period maps ${\rm{per}}_{\varphi}$ and \eqref{padicperiod} as in section \ref{sec:hecke} correspond to the period maps
$$
{\rm{per}}: H^{0}(B_{\tau},\Omega_{B_{\tau}/K}) \rightarrow H^{1}(B_{\tau}(\bc),\bc)
$$
and 
$$
D_{dR}(L_{i(\tau)}\otimes \bq_{p},H^{1}_{et}(B_{\tau}\otimes_{K}\bar{K}_{v},\bq_{p})\otimes_{L_{i(\tau)}\otimes\bq_{p}}L_{i(\tau),\csp})\simeq H^{1}_{dR}(B_{\tau}/K_{v}) \otimes_{L_{i(\tau)}\otimes\bq_{p}} L_{i(\tau), \csp} 
$$
respectively. In view of the isogeny \eqref{iso} the proof concludes. 
\end{proof}
}
\begin{corollary} In the situation of Prop. \ref{CM} let $p$ be any prime number, $\ff$ a multiple of the conductor of $B$ and $\gamma\in H^1(B(\bc),\bq)$. Then the image $z_{p^\infty\ff}(\gamma)'$ of $z_{p^\infty\ff}$ under
\begin{align*} &H^1_{p^\infty\ff}(\bz_p(1))\xrightarrow{\gamma}H^1_{p^\infty\ff}(\bz_p(1))\otimes  H^1_{et}(B\otimes_{K}\bar{\bq},\bq_p) \simeq H^1_{p^\infty\ff}(V_p({^t}B))\\
\to & H^1(\co_K[\frac{1}{p}],V_p({^t}B)) \xrightarrow{\exp^*}H^0(B_{K_p},\Omega_{B_{K_p}/K_p})
 \end{align*}
is an element $H^0(B,\Omega_{B/K})$. Moreover, if $\gamma_1,\dots,\gamma_d$ is a $K$-basis of $H^1(B(\bc),\bq)$ then 
\begin{equation} \mydet_{K_\br}(\mathrm{per}_B)\left(z_{p^\infty\ff}(\gamma_1)'\wedge\cdots\wedge z_{p^\infty\ff}(\gamma_d)'\right)=L_{p\ff}(\bar{\psi},1)\cdot(\gamma_1\wedge\cdots\wedge\gamma_d).\label{periodlvalue}\end{equation}
\label{breciprocity}\end{corollary}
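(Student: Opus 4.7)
We deduce the corollary from Kato's reciprocity law (Prop.~\ref{reciprocity}) applied to each Hecke character $\varphi_i$ in the decomposition $\varphi=(\varphi_1,\ldots,\varphi_r)$ of the Serre--Tate character attached to $B=\Res_{F/K} E$, by means of the realization comparisons established in Prop.~\ref{bcomp}.

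The first claim is essentially immediate. By Prop.~\ref{bcomp}, the Galois-equivariant decomposition of $H^1_{et}(B\otimes_K\bar\bq,\bq_p)$ into the $V_p(\varphi_i)$ is compatible with the Betti and de~Rham decompositions. Writing $\gamma=\sum_i\gamma_i$ under $H^1(B(\bc),\bq)\simeq\bigoplus_i V_{L_i}(\varphi_i)$, the composite map defining $z_{p^\infty\ff}(-)'$ is additive in the input, so $z_{p^\infty\ff}(\gamma)'=\sum_i z_{p^\infty\ff}(\gamma_i)'$; each summand lies in $S(\varphi_i)$ by Prop.~\ref{reciprocity}, placing the total in $\bigoplus_i S(\varphi_i)\simeq H^0(B,\Omega_{B/K})$.

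For the determinant identity, by Prop.~\ref{bcomp} the period isomorphism splits as $\mathrm{per}_B=\bigoplus_i \mathrm{per}_{B_i}$, where $\mathrm{per}_{B_i}:S(\varphi_i)_\br\to V_{L_i}(\varphi_i)_\br$ is the $L_{i,\br}$-linear period map attached to the simple factor $B_i$. Since $S(\varphi_i)$ and $V_{L_i}(\varphi_i)$ have $L_i$-rank one, $\mathrm{per}_{B_i}$ is multiplication by a scalar $\lambda_i\in L_{i,\br}^\times$. Picking an $L_i$-basis $e_i$ of $V_{L_i}(\varphi_i)$ together with $z_{p^\infty\ff}(e_i)'\in S(\varphi_i)$, Prop.~\ref{reciprocity} identifies the $\tau_i$-component of $\lambda_i$ under $L_{i,\br}=\prod_{\tau_i|_K=\mathrm{id}}\bc$ as $L_{p\ff}(\bar\varphi_{i,\tau_i},1)$, for each embedding $\tau_i:L_i\hookrightarrow\bc$ extending the fixed $K\hookrightarrow\bc$. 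Since the $K_\br$-determinant of multiplication by $\lambda_i$ on $L_{i,\br}$ (a $K_\br=\bc$-module of rank $[L_i:K]$) equals the norm $N_{L_{i,\br}/K_\br}(\lambda_i)=\prod_{\tau_i|_K=\mathrm{id}}L_{p\ff}(\bar\varphi_{i,\tau_i},1)$, multiplying over $i$ yields
$$\mydet_{K_\br}(\mathrm{per}_B)\bigl(z_{p^\infty\ff}(\gamma_1)'\wedge\cdots\wedge z_{p^\infty\ff}(\gamma_d)'\bigr)=\Bigl(\prod_i\prod_{\tau_i|_K=\mathrm{id}}L_{p\ff}(\bar\varphi_{i,\tau_i},1)\Bigr)\cdot(\gamma_1\wedge\cdots\wedge\gamma_d)$$
for any $K$-basis $\gamma_1,\ldots,\gamma_d$ of $H^1(B(\bc),\bq)$, since this determinant is independent of the chosen basis.

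It remains to match $\prod_i\prod_{\tau_i|_K=\mathrm{id}}L_{p\ff}(\bar\varphi_{i,\tau_i},1)$ with $L_{p\ff}(\bar\psi,1)$. By Prop.~\ref{CM}(c) we have $\psi=\eta\circ N_{F/K}$, whence
$$L_{p\ff}(\bar\psi,s)=\prod_{\chi\in\widehat{\Gal(F/K)}}L_{p\ff}(\bar\eta\cdot\chi,s).$$
The $\Gal(\bar\bq/K)$-orbits on the set $\{\bar\eta\chi\}$ correspond precisely to the simple factors $B_i$ of $B$, and the characters in the orbit attached to $B_i$ are exactly the $\bar\varphi_{i,\tau_i}$ for $\tau_i|_K=\mathrm{id}$. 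This delivers the desired equality. The principal bookkeeping step is this last identification, which amounts to tracing through the construction of the Serre--Tate character alongside the realization comparisons in the proof of Prop.~\ref{bcomp}; the remaining steps are formal consequences of Prop.~\ref{reciprocity} and Prop.~\ref{bcomp}.
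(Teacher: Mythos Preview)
Your proof is correct and follows essentially the same route as the paper: apply Prop.~\ref{reciprocity} to each $\varphi_i$ via the comparisons of Prop.~\ref{bcomp}, compute the $K_\br$-determinant as the norm $N_{L_\br/K_\br}$ of the tuple of $L$-values, and then invoke the factorization $L_{p\ff}(\bar\psi,s)=\prod_{\tau|_K=\iota}L_{p\ff}(\bar\varphi_\tau,s)$ (the paper cites \cite{gos}[Eq.~(5.0), Lemma~(4.8)(iii)] for this, whereas you sketch it via $\psi=\eta\circ N_{F/K}$ and induction of characters). One phrasing point: when you say ``$\mathrm{per}_{B_i}$ is multiplication by a scalar $\lambda_i$'', you really mean the composite $V_{L_i}(\varphi_i)_\br\to S(\varphi_i)_\br\xrightarrow{\mathrm{per}_{B_i}}V_{L_i}(\varphi_i)_\br$, since $\mathrm{per}_{B_i}$ alone maps between distinct rank-one modules; your subsequent identification of $\lambda_i$ via Prop.~\ref{reciprocity} makes clear this is what you intend.
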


\begin{proof} The first statement is clear from Prop. \ref{reciprocity} for $\varphi_1,\dots,\varphi_r$. Since $\gamma\mapsto z_{p^\infty\ff}(\gamma)$ is $K$-linear, and its scalar extension $K_\br$-linear, it suffices to show (\ref{periodlvalue}) for a particular $K_\br$-basis $\{\gamma_i\}$ of $H^1(B(\bc),\bq)_\br$ in order to deduce it for all. Taking $\{\gamma_i\}=\{\gamma_\tau\}$ where $\gamma_\tau$ is a $K_\br=\bc$-basis of $V_{L}(\varphi)\otimes_{L,\tau}\bc$ Prop. \ref{reciprocity} gives the equality
$$ \mydet_{K_\br}(\mathrm{per}_B)\left(z_{p^\infty\ff}(\gamma_1)\wedge\cdots\wedge z_{p^\infty\ff}(\gamma_d)\right)=\prod\limits_{\tau\in\Hom_K(L,\bc)}L_{p\ff}(\bar{\varphi}_{\tau},1)\cdot(\gamma_1\wedge\cdots\wedge\gamma_d).$$
It remains to recall the identity of L-functions \cite{gos}[Eq. (5.0), Lemma (4.8)(iii)]
\begin{equation} L_{p\ff}(\bar{\psi}_\iota,s)=\prod_{\tau\vert_K=\iota} L_{p\ff}(\bar{\varphi}_\tau,s)\label{norm}\end{equation}
where $\iota:K\to \bc$ is the embedding fixed above.
One can view the left hand side as the $K$-equivariant L-function of ${^t}E/F$ or, since 
$$ V_p({^t}B)=\Ind_{G_F}^{G_K} V_p({^t}E),$$
as the $K$-equivariant L-function of ${^t}B=\Res_{F/K}{^t}E$ over $K$. On the other hand, the tuple 
$$(L_{p\ff}(\bar{\varphi}_\tau,s))_\tau\in \prod_{\tau\vert_K=\iota} \bc\simeq L_\br$$ 
can be viewed as the  $L$-equivariant L-function of ${^t}B/K$. The identity (\ref{norm}) then amounts to the fact that the norm from $L_\br$ to $K_\br$ of the $L$-equivariant L-function is the $K$-equivariant L-function.
\end{proof}

\section{The Iwasawa main conjecture}\label{iwasawa}

In section \ref{twistedelliptic} we recall the exact notation for the Euler system of elliptic units used in \cite{jlk} and match it with the notation already introduced in section \ref{sec:elliptic} (which is identical to Kato's notation in \cite{kato00}). In section \ref{sec:mc} we recall the "$\Lambda$-main conjecture" of \cite{jlk} associated to an arbitrary prime number $p$ and finite order character $\chi$ of $G_K$. In section \ref{sec:descent} we compute the image of the basis given by the main conjecture in the determinant of Galois cohomology of the Galois representation associated to a Hecke character. This will allow us to complete the proof of Thm. \ref{main2}, resp. Thm. \ref{main}, in section \ref{sec:ab}, resp. \ref{sec:ell}.
 
\subsection{Twisted Elliptic Units}\label{twistedelliptic} We use the notation of sections \ref{sec:Iwasawa} and \ref{sec:elliptic}. Let $\co$ be the ring of integers in a finite extension of $\bq_p$ and
$$ G_K\to G_{p^\infty\ff}\xrightarrow{\chi} \co^\times$$
a finite order character of conductor $\ff_\chi\mid\ff$. 
Following \cite{jlk}[Def. 1.1] we denote by $\co(\chi)$ the free rank one $\co$-module on which $G_{p^\infty\ff}$ acts via $\chi^{-1}$ and following \cite{jlk}[Def. 4.2] we define
\begin{equation} \Lambda(\chi):=\co(\chi)\otimes_{\bz_p}\bz_p[[\Gamma]].\label{lchidef}\end{equation}
Then $\Lambda(\chi)$ is a free, rank one module over
$$\Lambda_\co:=\co[[\Gamma]] \simeq\co[[T_1,T_2]]$$
with a continuous $\Lambda_\co$-linear $G_{p^\infty\ff}$-action.

For nonzero ideals $\fa,\fm$ of $\co_K$, prime number $p$ and $n\geq 1$ so that $(\fa,6p\fm)=1$ and $\co_K^\times\to(\co_K/p^n)^\times$ is injective, define \cite{jlk}[Def. 3.2]
$$ {_\fa}\zeta_\fm:=N_{K(p^n\fm)/K(\fm)} ({_\fa}z_{p^n\fm}).$$
Note that ${_\fa}\zeta_\fm$ depends on $p$ (in addition to $\fa$ and $\fm$) but not on $n$. For an $\co$-basis $t(\chi)$ of $\co(\chi)$ define \cite{jlk}[Def. 3.5]
$$ {_\fa}\zeta_\fm(\chi):=\trace_{K(\ff_\chi\fm)/K(\fm)}({_\fa}\zeta_{\ff_\chi\fm}\otimes t(\chi))\in H^1(\co_{K(\fm)}[\frac{1}{p}], \co(\chi)(1)).$$ 
Here $\co(\chi)$ denotes the $p$-adic \'etale sheaf $j_*\co(\chi)$ where 
$$j:\Spec\co_{K(\fm)}[\frac{1}{p\ff}]\to\Spec\co_{K(\fm)}[\frac{1}{p}]$$ is an open embedding and the Galois module $\co(\chi)$ is viewed as a local system on $\Spec\co_{K(\fm)}[\frac{1}{p\ff}]$.
For any field $K\subseteq F\subseteq K(\fm)$ define
$$ {_\fa}\zeta_F(\chi):=\trace_{K(\fm)/F}({_\fa}\zeta_\fm(\chi)).$$
Denote by $K_n/K$ the fixed field of the kernel of $G_{p^\infty \ff}\to\Gamma\to \Gamma/\Gamma^{p^n}$ and define \cite{jlk}[5.2]
$${_\fa}\zeta(\chi):=\varprojlim_n {_\fa}\zeta_{K_n}(\chi)\in H^1(\co_{K}[\frac{1}{p}], \Lambda(\chi)(1))$$
and
$$\zeta(\chi):=(N\fa-\sigma_\fa)^{-1}{_\fa}\zeta(\chi)\in H^1(\co_{K}[\frac{1}{p}], \Lambda(\chi)(1))\otimes_{\Lambda_\co}Q(\Lambda_\co).$$
From section \ref{sec:elliptic} recall the norm compatible system
$$ {_\fa}z_{p^\infty\ff}:=({_\fa}z_{p^n\ff})_{n\geq 1} \in \varprojlim_{K'} \co_{K'}[\frac{1}{p}]^\times\otimes_\bz\bz_p\simeq H^1(\co_{K}[\frac{1}{p}], \Lambda(1)).$$
\begin{lemma} For $\ff_\chi\mid\ff$ the image of ${_\fa}z_{p^\infty\ff}$ under the map
\begin{equation} H^1(\co_{K}[\frac{1}{p}], \Lambda(1))\to H^1(\co_{K}[\frac{1}{p}], \Lambda(\chi)(1))\label{projection}\end{equation}
induced by (\ref{lchidef}) coincides with $$\prod_{\fl\mid\ff,\fl\nmid p}(1-\chi(\fl)\Frob_{\fl}^{-1})\cdot {_\fa}\zeta(\chi).$$ Similarly, the image of $z_{p^\infty\ff}$ coincides with
$\prod_{\fl\mid\ff,\fl\nmid p}(1-\chi(\fl)\Frob_{\fl}^{-1})\cdot\zeta(\chi)$.
\label{zlemma}\end{lemma}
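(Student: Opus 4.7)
The proof combines Shapiro's lemma for the two Iwasawa modules with the Euler-system norm relations for the elliptic units ${_\fa}z_\fm$. Applying Shapiro we identify
$$H^1(\co_K[1/p],\Lambda(1))\simeq\varprojlim_n H^1(\co_{K(p^n\ff)}[1/p],\bz_p(1)),\qquad H^1(\co_K[1/p],\Lambda(\chi)(1))\simeq\varprojlim_n H^1(\co_{K_n}[1/p],\co(\chi)(1)),$$
and the map (\ref{projection}) becomes, at level $n$, the corestriction $\mathrm{cor}_{K(p^n\ff)/K_n}$ (well-defined since $K_n\subseteq K(p^n\ff)$ for $n$ large) followed by the $\chi$-twist $x\mapsto x\otimes t(\chi)$; here we use that $\chi\vert_{G_{K(p^n\ff)}}$ is trivial because $\ff_\chi\mid\ff$. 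Under the induced identification $\Lambda(\chi)\simeq\Lambda_\co$ via the basis $t(\chi)\otimes 1$, an Artin symbol $\Frob_\fl$ for $\fl\nmid\ff_\chi p$ acts as $\chi(\fl)^{-1}\bar{\Frob}_\fl\in\Lambda_\co^\times$, with $\bar{\Frob}_\fl\in\Gamma$ the image; consequently $\Frob_\fl^{-1}$ corresponds to $\chi(\fl)\bar{\Frob}_\fl^{-1}$.

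\textbf{Computation.} I would factor the corestriction as $\mathrm{cor}_{K(p^n\ff_\chi)\cdot K_n/K_n}\circ\mathrm{cor}_{K(p^n\ff)/K(p^n\ff_\chi)\cdot K_n}$ and treat the two steps separately. The Euler-system norm relations state that for a prime $\fl\nmid\fa$ and an ideal $\fm$,
$$N_{K(\fm\fl)/K(\fm)}({_\fa}z_{\fm\fl})=\begin{cases}(1-\Frob_\fl^{-1})\cdot{_\fa}z_\fm&\text{if }\fl\nmid\fm,\\ {_\fa}z_\fm&\text{if }\fl\mid\fm.\end{cases}$$
They allow one to peel off the primes $\fl\mid\ff$ with $\fl\nmid\ff_\chi p$ in the first corestriction one at a time: for $n$ large the prime-to-$p$ parts of $p^n\ff$ and $p^n\ff_\chi$ differ precisely by these primes, while primes above $p$ are absorbed in $p^n$ and contribute nothing. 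Iterating the norm relation yields
$$\mathrm{cor}_{K(p^n\ff)/K(p^n\ff_\chi)\cdot K_n}({_\fa}z_{p^n\ff})=\prod_{\fl\mid\ff,\,\fl\nmid\ff_\chi p}(1-\Frob_\fl^{-1})\cdot{_\fa}z_{p^n\ff_\chi}.$$
The remaining corestriction composed with the $\chi$-twist reproduces, by construction, the definition of ${_\fa}\zeta_{K_n}(\chi)$. Interpreting each surviving $(1-\Frob_\fl^{-1})$ inside $\Lambda(\chi)$ via the description above rewrites it as $(1-\chi(\fl)\bar{\Frob}_\fl^{-1})\in\Lambda_\co^\times$. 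Finally, for $\fl\mid\ff_\chi$ with $\fl\nmid p$ the character $\chi$ is ramified at $\fl$, so $\chi(\fl)=0$ and the factor $(1-\chi(\fl)\Frob_\fl^{-1})$ equals $1$; adjoining these factors extends the product to all $\fl\mid\ff$, $\fl\nmid p$ without changing its value. Passing to the inverse limit in $n$ gives the first assertion.

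\textbf{Second statement and main obstacle.} The identity for $z_{p^\infty\ff}$ follows from the one for ${_\fa}z_{p^\infty\ff}$ by multiplying through by $(N\fa-\sigma_\fa)^{-1}$, noting that the image of $\sigma_\fa$ under the projection (\ref{projection}) is the element $\chi(\fa)^{-1}\bar{\sigma}_\fa\in\Lambda_\co^\times$ already governing the denominator in the definition of $\zeta(\chi)$. The principal technical point is the verification that the Shapiro identifications really do translate (\ref{projection}) into corestriction-plus-twist at each finite level $n$, and that the intermediate corestriction through $K(p^n\ff_\chi)\cdot K_n$ is independent of the order in which the primes $\fl$ are stripped off so that it can be iteratively computed via the one-prime norm relation. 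Once these structural compatibilities are in place, the rest of the argument is the prime-by-prime bookkeeping above together with the vanishing $\chi(\fl)=0$ at primes $\fl\mid\ff_\chi$.
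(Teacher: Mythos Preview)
Your proposal is correct and follows essentially the same route as the paper's proof: identify the map (\ref{projection}) with corestriction-plus-twist via Shapiro, factor the corestriction through the tower at level $\ff_\chi$, apply the Euler-system norm relation \cite{jlk}[Prop.~3.3(2)] to strip off the primes $\fl\mid\ff$, $\fl\nmid p\ff_\chi$, reinterpret each surviving $(1-\Frob_\fl^{-1})$ inside $\Lambda(\chi)$ as $(1-\chi(\fl)\Frob_\fl^{-1})$, and finally adjoin the trivial factors at $\fl\mid\ff_\chi$ using $\chi(\fl)=0$. The paper is terser (it works directly with the norm $N_{K(\ff p^\infty)/K(\ff_\chi p^\infty)}$ rather than your intermediate field $K(p^n\ff_\chi)\cdot K_n$) but the substance is identical.
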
 

\begin{proof}
By definition
\begin{align*}
{_{\fa}}\zeta_{K_{n}}(\chi)&=\trace_{K(\ff_{\chi}p^{n})/K_{n}}({_{\fa}}\zeta_{\ff_{\chi}p^{n}}(\chi))\\
&=\trace_{K(\ff_{\chi}p^{r+n})/K_{n}}({_{\fa}}z_{\ff_{\chi}p^{r+n}}\otimes t(\chi))
\end{align*}
for an integer $r$ with $\co_{K}^{\times}\rightarrow(\co_{K}/p^{r})^{\times}$ injective. So the image of 
${_{\fa}}z_{p^{\infty}\ff_{\chi}}$ under \eqref{projection} coincides with ${_{\fa}}\zeta(\chi)$. 
Note that \eqref{projection} factors through the map 
$$H^1(\co_{K}[\frac{1}{p}], \Lambda_{\ff}(1)) \to H^1(\co_{K}[\frac{1}{p}], \Lambda_{\ff_{\chi}}(1))$$
 induced by the projection $G_{p^{\infty}\ff}\twoheadrightarrow G_{p^{\infty}\ff_{\chi}}$ 
 where $\Lambda_{\fg}:=\bz_p[[\Gal(K(p^\infty\fg)/K)]]$ for an ideal $\fg\subset\co_K$.  
 It coincides with the norm map 
$$N_{K(\ff p^{\infty})/K(\ff_{\chi}p^{\infty})}: 
\varprojlim_{K'\subset K(\ff p^{\infty})} \co_{K'}[\frac{1}{p}]^\times\otimes_\bz\bz_p \to 
\varprojlim_{K' \subset K(\ff_{\chi} p^{\infty})} \co_{K'}[\frac{1}{p}]^\times\otimes_\bz\bz_{p}.$$ 
Recall the Euler system norm relation \cite{jlk}[Prop. 3.3 (2)] 
$$
N_{K(\ff p^{\infty})/K(\ff_{\chi}p^{\infty})}({_{\fa}}z_{\ff p^{\infty}})=\prod_{\fl\mid\ff,\fl\nmid p\ff_\chi}(1-\Frob_{\fl}^{-1})\cdot {_\fa}z_{\ff_{\chi}p^{\infty}} 
$$
and observe that $\chi(\Frob_{\fl}^{-1})=\chi(\fl)^{-1}\in\co$ acts on $\Lambda(\chi)$ via $\chi(\fl)$. Noting that $\chi(\fl)=0$ for $\fl\mid\ff_\chi$ the proof concludes. 
\end{proof}

\subsection{The main conjecture} \label{sec:mc} We shall also denote by $z_{p^\infty\ff}$ the image of $z_{p^\infty\ff}$ under the composition of (\ref{projection}) with the restriction map 
$$H^1(\co_{K}[\frac{1}{p}], \Lambda(\chi)(1))\otimes_{\Lambda_\co}Q(\Lambda_\co)\to H^1(\co_{K}[\frac{1}{p\ff}], \Lambda(\chi)(1))\otimes_{\Lambda_\co}Q(\Lambda_\co)$$
induced by the open immersion $j$. 
 
\begin{theorem} For $\ff_\chi\mid\!\ff$ there is an equality of invertible $\Lambda_\co$-submodules 
$$\Lambda_\co\cdot z_{p^\infty\ff} =  \mydet^{-1}_{\Lambda_\co} R\Gamma(\co_{K}[\frac{1}{p\ff}], \Lambda(\chi)(1))$$
of $\mydet^{-1}_{\Lambda_\co} R\Gamma(\co_{K}[\frac{1}{p\ff}], \Lambda(\chi)(1))\otimes_{\Lambda_\co}Q(\Lambda_\co)$.
\label{mc}\end{theorem}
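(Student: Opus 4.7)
The strategy is to deduce this determinant-theoretic form of the main conjecture from the classical characteristic-ideal form proved by Johnson-Leung and Kings \cite{jlk}, in two moves: first for the minimal allowed auxiliary ideal $\ff = \ff_\chi$, and then propagating to arbitrary $\ff \supseteq \ff_\chi$ by a localization argument at the extra primes, where the Euler factors from Lemma \ref{zlemma} will be absorbed by the change of base ring of the \'etale cohomology.

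\emph{Step 1 (reduction to $\zeta(\chi)$).} By Lemma \ref{zlemma} one has
$$ z_{p^\infty\ff} \;=\; \prod_{\fl\mid\ff,\;\fl\nmid p\ff_\chi} \bigl(1 - \chi(\fl)\Frob_\fl^{-1}\bigr)\cdot \zeta(\chi) $$
in $H^1(\co_K[\tfrac{1}{p\ff}],\Lambda(\chi)(1))\otimes_{\Lambda_\co}Q(\Lambda_\co)$, since the factors for $\fl \mid \ff_\chi$ are trivial (as $\chi(\fl)=0$). Hence the statement for $\ff$ is equivalent to the identity
$$ \Lambda_\co\cdot\zeta(\chi) \;=\; \prod_{\fl\mid\ff,\;\fl\nmid p\ff_\chi}\bigl(1-\chi(\fl)\Frob_\fl^{-1}\bigr)^{-1}\cdot \mydet^{-1}_{\Lambda_\co}R\Gamma\bigl(\co_K[\tfrac{1}{p\ff}],\Lambda(\chi)(1)\bigr). $$

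\emph{Step 2 (case $\ff=\ff_\chi$).} I would first prove
$$ \Lambda_\co\cdot\zeta(\chi) \;=\; \mydet^{-1}_{\Lambda_\co}R\Gamma\bigl(\co_K[\tfrac{1}{p\ff_\chi}],\Lambda(\chi)(1)\bigr). $$
This is the translation of the Johnson-Leung/Kings main conjecture into the language of determinants of perfect complexes. The complex $R\Gamma(\co_K[\tfrac{1}{p\ff_\chi}],\Lambda(\chi)(1))$ is perfect over $\Lambda_\co$ and concentrated in degrees $1,2$. Via the Kummer sequence $H^1$ computes the semi-local units $U_\infty(\chi)$ in the tower (modulo global units), and via Poitou--Tate / Artin--Verdier duality $H^2$ computes the Iwasawa module $X_\infty(\chi)$. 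Under these identifications, $\zeta(\chi)$ maps to the image of the elliptic unit Euler system in $U_\infty(\chi)$, and the equality
$$ \operatorname{char}_{\Lambda_\co}\bigl(U_\infty(\chi)/\mathcal C_\infty(\chi)\bigr) \;=\; \operatorname{char}_{\Lambda_\co}\bigl(X_\infty(\chi)\bigr) $$
proved in \cite{jlk} becomes, after taking alternating products of determinants of the cohomology groups, precisely the desired generator equality. The upgrade from characteristic ideals to actual determinant lattices uses that over the regular two-dimensional local ring $\Lambda_\co$ the determinant functor is determined by its values on torsion modules of finite projective dimension.

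\emph{Step 3 (change of base).} To pass from $\ff_\chi$ to $\ff$, apply the localization exact triangle associated to the open immersion $\Spec\co_K[\tfrac{1}{p\ff}]\hookrightarrow\Spec\co_K[\tfrac{1}{p\ff_\chi}]$:
$$ \bigoplus_{\fl\mid\ff,\;\fl\nmid p\ff_\chi} R\Gamma_\fl\bigl(\co_K[\tfrac{1}{p\ff_\chi}],\Lambda(\chi)(1)\bigr) \to R\Gamma\bigl(\co_K[\tfrac{1}{p\ff_\chi}],\Lambda(\chi)(1)\bigr) \to R\Gamma\bigl(\co_K[\tfrac{1}{p\ff}],\Lambda(\chi)(1)\bigr). $$
For each such $\fl$ the sheaf $\Lambda(\chi)(1)$ is unramified at $\fl$, so the local cohomology with support at $\fl$ is computed by the two-term Frobenius complex $\Lambda(\chi)(1)^{I_\fl}\xrightarrow{1-\Frob_\fl}\Lambda(\chi)(1)^{I_\fl}$, whose inverse determinant equals $(1-\chi(\fl)\Frob_\fl^{-1})\cdot\Lambda_\co$ after accounting for the Tate twist. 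Taking $\mydet^{-1}_{\Lambda_\co}$ of the triangle yields
$$ \mydet^{-1}_{\Lambda_\co}R\Gamma\bigl(\co_K[\tfrac{1}{p\ff}],\Lambda(\chi)(1)\bigr) \;=\; \prod_{\fl\mid\ff,\;\fl\nmid p\ff_\chi}\bigl(1-\chi(\fl)\Frob_\fl^{-1}\bigr)\cdot \mydet^{-1}_{\Lambda_\co}R\Gamma\bigl(\co_K[\tfrac{1}{p\ff_\chi}],\Lambda(\chi)(1)\bigr), $$
which, combined with Steps 1 and 2, gives the theorem.

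\emph{Main obstacle.} The crux is Step 2: the careful bookkeeping that converts the characteristic-ideal equality of \cite{jlk} into a genuine equality of invertible $\Lambda_\co$-submodules of the determinant of the complex. One must check that the descent steps (Kummer theory on the $H^1$ side, Poitou--Tate and Artin--Verdier on the $H^2$ side) are compatible with determinants rather than merely with characteristic ideals, and that no correction ideal arises from the global units or from $\mu$-type contributions. The rest of the argument is essentially formal manipulation with localization triangles.
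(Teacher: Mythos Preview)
Your overall architecture---reduce to $\zeta(\chi)$ via Lemma \ref{zlemma}, then invoke \cite{jlk}---matches the paper, but you are re-deriving what \cite{jlk} already provides. The paper's proof is two lines: Corollary 5.3 of \cite{jlk} is \emph{already} the determinant-theoretic equality
$$\Lambda_\co\cdot\prod_{\fl\mid\ff,\;\fl\nmid p}(1-\chi(\fl)\Frob_{\fl}^{-1})\cdot\zeta(\chi)=  \mydet^{-1}_{\Lambda_\co} R\Gamma(\co_{K}[\tfrac{1}{p\ff}], \Lambda(\chi)(1))$$
for arbitrary $\ff$ with $\ff_\chi\mid\ff$, and combining this with Lemma \ref{zlemma} gives the theorem immediately. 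Your Steps 2 and 3 are therefore unnecessary: \cite{jlk} works from the outset in the language of determinants of perfect complexes (following Kato), not in the classical characteristic-ideal formulation, and they already handle the dependence on $\ff$.

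Your ``main obstacle''---upgrading a characteristic-ideal equality to a determinant equality---reflects a misreading of what \cite{jlk} proves. Indeed the paper's subsequent remark notes that the determinant form is only \emph{equivalent} to Rubin's characteristic-ideal main conjecture under the restriction $p\nmid |\co_K^\times|\cdot|G_{p^\infty\ff}^{tor}|$; it is precisely because \cite{jlk} prove the stronger determinant statement directly that the present paper can handle all primes $p$. Had you started from Rubin's characteristic-ideal version and attempted the upgrade you sketch in Step 2, you would run into genuine difficulties at the excluded primes, where the identification of $H^i$ with classical Iwasawa modules and the passage from characteristic ideals to determinants are not straightforward. Your localization argument in Step 3 is correct in principle, but redundant given the citation.
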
 

\begin{proof} By \cite{jlk}[Cor. 5.3]  there is an equality of invertible $\Lambda_\co$-submodules 
$$\Lambda_\co\cdot\prod_{\fl\mid\ff,\fl\nmid p}(1-\chi(\fl)\Frob_{\fl}^{-1})\cdot\zeta(\chi)=  \mydet^{-1}_{\Lambda_\co} R\Gamma(\co_{K}[\frac{1}{p\ff}], \Lambda(\chi)(1))$$
of $\mydet^{-1}_{\Lambda_\co} R\Gamma(\co_{K}[\frac{1}{p\ff}], \Lambda(\chi)(1))\otimes_{\Lambda_\co}Q(\Lambda_\co)$. Together with Lemma \ref{zlemma} this gives the result.
\end{proof}
\begin{remark} 
For primes $p\nmid |\co_K^\times|\cdot |G_{p^{\infty}\ff}^{tor}|$ the above main conjecture is equivalent  Rubin's main conjecture \cite{rubin88}, \cite{rubin91} (see \cite{jlk}[\S5.5]). 
\end{remark}

\subsection{Descent to Galois representation of Hecke characters} \label{sec:descent} Let $\varphi$ be an algebraic Hecke character of $K$ of infinity type $(-1,0)$ and with values in the number field $L$. For a prime number $p$ and place $\csp\mid p$ of $L$ let $V_{L_\csp}(\varphi)$ be the continuous $L_\csp$-linear $G_K$-representation associated to $\varphi$ as in section \ref{sec:hecke}. Choose a free, rank one $G_K$-invariant $\co:=\co_{L_{\csp}}$-submodule 
$$T_{\co_{L_{\csp}}}(\varphi)\subset V_{L_{\csp}}(\varphi)$$
and let
$$ \rho:G_{p^\infty\ff}\to \co^\times$$
denote the character giving the action of $G_K$ on $T_{\co_{L_{\csp}}}(\varphi)$. Here $\ff$ is any multiple of the conductor $\ff_{\varphi}$ of $\varphi$.
Choose a decomposition (\ref{deltadef}), i.e. a splitting $G_{p^\infty\ff}\to\Delta$ of the inclusion $\Delta:=G_{p^\infty\ff}^{tor}\subseteq G_{p^\infty\ff}$  and define a finite order character $\chi$ as the composite
$$ \chi:G_{p^\infty\ff}\to \Delta\xrightarrow{\rho^{-1}\vert_\Delta} \co^\times. $$

\begin{lemma} For primes $v\nmid p$ of $K$ we have 
$$(\ff_\chi)_v=(\ff_{\varphi})_v\ \left(= (\ff_{\rho})_v\right).$$
\label{conductor}\end{lemma}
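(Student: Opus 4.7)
The plan is to compare $\chi$ with $\rho^{-1}$ on the inertia subgroup $I_v \subset G_K$ for $v \nmid p$, and then to invoke the standard local-global compatibility between the Hecke character $\varphi$ and its associated $\csp$-adic Galois representation.

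First I would observe that for $v \nmid p$ the image of $I_v$ in $G_{p^\infty\ff}$ is a finite subgroup, hence lies entirely in $\Delta = G_{p^\infty\ff}^{tor}$. Indeed, $K(p^\infty\ff)/K$ is by construction the union of the ray class fields of conductor $p^n\ff$, so by class field theory its conductor divides $p^\infty\ff$, and the $v$-part of this conductor for $v \nmid p$ is simply $\ff_v$. Consequently the image of $\co_{K,v}^\times$ in $G_{p^\infty\ff}$ factors through the finite quotient $\co_{K,v}^\times/(1+\ff_v)$ (trivial if $v \nmid \ff$) and in particular is contained in $\Delta$.

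Next, since the projection $\pi: G_{p^\infty\ff} \to \Delta$ appearing in the definition $\chi = (\rho^{-1}|_\Delta)\circ\pi$ is the identity on $\Delta$, the previous step gives $\chi|_{I_v} = \rho^{-1}|_{I_v}$. Since the conductor of a continuous character of $G_K$ at a place $v \nmid p$ is entirely determined by its restriction to $I_v$ (via the upper numbering filtration on $I_v^{ab}$), we conclude $(\ff_\chi)_v = (\ff_\rho)_v$.

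The argument is completed by the standard local-global compatibility $(\ff_\rho)_v = (\ff_\varphi)_v$ for $v \nmid p$, which is implicit in the construction of $V_{L_\csp}(\varphi)$ recalled in section \ref{sec:hecke} and reduces to local Artin reciprocity applied to $\varphi$. I do not anticipate a real obstacle here: the content of the lemma is simply that $\chi$ and $\rho^{-1}$ differ by a character of $G_{p^\infty\ff}$ trivial on $\Delta$, i.e.\ factoring through $\Gamma \simeq \bz_p \times \bz_p$, and any such character is automatically unramified at every $v \nmid p$ since the image of $I_v$ there is finite.
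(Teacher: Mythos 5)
Your proposal is correct and follows essentially the same route as the paper's own proof: observe that for $v\nmid p$ the image of $I_v$ in $G_{p^\infty\ff}$ is finite, hence lies in $\Delta$, so $\chi$ and $\rho^{-1}$ agree on $I_v$, which determines the $v$-part of the conductor. Your last paragraph merely supplies a short justification for the parenthetical identity $(\ff_\rho)_v=(\ff_\varphi)_v$, which the paper takes for granted.
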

\begin{proof} For $v\nmid p$ the image of the inertia subgroup $I_v$ in $G_{p^\infty\ff}$ is finite, hence lies in $\Delta$. By the definition of $\chi$ we have $\chi\vert_{I_v}=\rho^{-1}\vert_{I_v}$  and hence $(\ff_\chi)_v=(\ff_{\rho^{-1}})_v=(\ff_{\rho})_v$.  
 \end{proof}

\begin{remark} For $v\mid p$ the conductor of $\chi$ depends on the choice of a decomposition (\ref{deltadef}) and might differ from $(\ff_{\varphi})_v$ (in either direction).
\end{remark}

The following Lemma is a pared down generalization of \cite{flach03}[Lemma 5.7] from a one-variable to a two-variable Iwasawa algebra. Lemma 5.7 in \cite{flach03} computes the descent of a basis of the determinant of a perfect complex over a one-variable Iwasawa algebra. It might be possible to formulate a descent Lemma over a two-variable Iwasawa algebra in similar generality but we found it too confusing to do so for the simple application that we need.

\begin{lemma} Let $R$ be a two-dimensional regular local ring with fraction field $F$ and residue field $k$, $\Delta$ a perfect complex of $R$-modules and $\mathcal{L}\in H^1(\Delta)$ an element such that the following hold.
\begin{itemize} 
\item[1)] $H^1(\Delta)$ is $R$-torsion free and of $R$-rank one, $H^2(\Delta)$  is $R$-torsion and $H^i(\Delta)=0$ for $i\neq 1,2$. 
\item[2)] There is an equality of invertible $R$-submodules
$$ R\cdot\mathcal{L}= \mydet^{-1}_R\Delta$$
of $$H^1(\Delta)\otimes_RF\simeq \mydet^{-1}_R(\Delta)\otimes_RF.$$
\item[3)] The image $\bar{\mathcal{L}}$ of $\mathcal{L}$ under the natural map $H^1(\Delta)\to H^1(\Delta\otimes_R^{\mathbb L}k)$ is nonzero.
\item[4)] $H^0(\Delta\otimes_R^{\mathbb L}k)=0$
\end{itemize}
Then $H^i(\Delta\otimes_R^{\mathbb L}k)=0$ for $i\neq 1$, $\dim_k H^1(\Delta\otimes_R^{\mathbb L}k)=1$ and the image of $\mathcal{L}\otimes 1$ under the isomorphism
$$ (\mydet^{-1}_R\Delta)\otimes_Rk\simeq \mydet^{-1}_k(\Delta\otimes_R^{\mathbb L}k)\simeq H^1(\Delta\otimes_R^{\mathbb L}k)$$
coincides with $\bar{\mathcal L}$.
\label{descentlemma}\end{lemma}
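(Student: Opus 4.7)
The plan is to proceed in three stages: pin down the structure of $H^1(\Delta)$, force $H^2(\Delta)=0$, and deduce the explicit compatibility.

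\emph{Stage 1.} I would apply $(-)\otimes_R^{\mathbb L}k$ to the truncation distinguished triangle $H^1(\Delta)[-1]\to\Delta\to H^2(\Delta)[-2]$, producing a long exact sequence linking $H^i(\Delta\otimes_R^{\mathbb L}k)$ with the Tor groups of $H^1(\Delta)$ and $H^2(\Delta)$ against $k$. Hypothesis~(4) kills $\mathrm{Tor}_1^R(H^1(\Delta),k)$, and combined with hypothesis~(1) and the Auslander-Buchsbaum formula this forces $H^1(\Delta)\simeq R$ as a free $R$-module of rank one.

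\emph{Stage 2.} The natural map $\beta\colon H^1(\Delta)\otimes_R k\to H^1(\Delta\otimes_R^{\mathbb L}k)$ from the same long exact sequence has kernel equal to the image of the connecting map $\alpha\colon\mathrm{Tor}_2^R(H^2(\Delta),k)\to H^1(\Delta)\otimes_R k\simeq k$. Since the target is one-dimensional, if $\alpha\neq 0$ then $\beta=0$ and hypothesis~(3) fails; so $\alpha=0$, and since hypothesis~(4) already forces $\alpha$ to be injective, this yields $\mathrm{Tor}_2^R(H^2(\Delta),k)=0$. By Auslander-Buchsbaum $H^2(\Delta)$ has projective dimension at most one, hence, if nonzero, is torsion supported purely in codimension one. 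Next I would invoke multiplicativity of determinants on the truncation triangle to get
\[\mydet_R^{-1}(\Delta)=H^1(\Delta)\otimes_R\mydet_R(H^2(\Delta))^{-1}\subset H^1(\Delta)\otimes_R F,\]
where, for $H^2(\Delta)$ codimension-one-pure torsion, the invertible submodule $\mydet_R(H^2(\Delta))^{-1}\subset R$ is identified with the (principal) characteristic ideal of $H^2(\Delta)$, which is proper if $H^2(\Delta)\neq 0$. Hypothesis~(2) would then force $\mathcal{L}\in\fm\cdot H^1(\Delta)$, so its image in $H^1(\Delta)\otimes_R k\simeq k$ vanishes, again contradicting hypothesis~(3). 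Hence $H^2(\Delta)=0$.

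\emph{Stage 3.} With $H^1(\Delta)\simeq R$ and $H^2(\Delta)=0$ we have $\Delta\simeq R[-1]$ in the derived category, so $\Delta\otimes_R^{\mathbb L}k\simeq k[-1]$, immediately giving $H^i(\Delta\otimes_R^{\mathbb L}k)=0$ for $i\neq 1$ and $\dim_k H^1(\Delta\otimes_R^{\mathbb L}k)=1$. The final compatibility follows by tracking the unit $\mathcal{L}\in H^1(\Delta)\simeq R$ through the canonical chain $\mydet_R^{-1}(\Delta)\otimes_R k\simeq R\otimes_R k\simeq k\simeq H^1(\Delta\otimes_R^{\mathbb L}k)$, which agrees with the reduction map followed by $\beta$; both $\mathcal{L}\otimes 1$ and $\bar{\mathcal L}$ then arise as the image of the unit $\mathcal{L}$.

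The main obstacle is Stage~2, specifically the identification of $\mydet_R(H^2(\Delta))^{-1}$ with the characteristic ideal of $H^2(\Delta)$ via the acyclicity trivialization of the rationalized perfect complex over $F$. This rests on multiplicativity of determinants in distinguished triangles, together with the standard fact that a nonzero pure codimension one torsion module over a two-dimensional regular local ring has proper characteristic ideal.
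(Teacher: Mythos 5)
Your proof is correct, and it takes a genuinely different route from the paper's. The paper argues by a two-step reduction: choosing a system of parameters $a,b$, it first passes to the discrete valuation ring $R/(a)$, applying Nakayama over the localization $R_{(a)}$ and the universal-coefficients sequences to show $\mathcal L$ generates $H^1(\Delta)_{(a)}$ and $H^2(\Delta)_{(a)}=0$; it then repeats the reduction modulo $b$ over $R/(a)$ to deduce $H^2(\Delta')=0$ and the stated compatibility. You instead work in one shot over the two-dimensional ring $R$, applying $\otimes^{\mathbb L}_R k$ to the truncation triangle $H^1(\Delta)[-1]\to\Delta\to H^2(\Delta)[-2]$ and reading off everything from the resulting long exact sequence of Tor groups: hypothesis (4) kills $\mathrm{Tor}_1^R(H^1(\Delta),k)$, forcing $H^1(\Delta)\simeq R$; hypotheses (3) and (4) together kill $\mathrm{Tor}_2^R(H^2(\Delta),k)$, giving $\mathrm{pd}_R H^2(\Delta)\leq 1$; and then multiplicativity of the determinant on the truncation triangle together with (2) and (3) forces $H^2(\Delta)=0$. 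This is a cleaner and somewhat more conceptual argument; the paper's iterated reduction is more hands-on but produces the intermediate DVR-level structure (freeness of $H^1(\Delta')$, vanishing of $H^2(\Delta')$) that it uses explicitly. Two tiny points worth fixing. First, "$\mathrm{Tor}_i(M,k)=0$ for $i>p$ implies $\mathrm{pd}\,M\leq p$" is the characterization of projective dimension via minimal free resolutions over a local ring, not the Auslander--Buchsbaum formula (which relates $\mathrm{pd}$ and depth). Second, the "pure codimension one" refinement is a detour you don't need: once $\mathrm{pd}_R H^2(\Delta)\leq 1$ and $H^2(\Delta)$ is torsion, a resolution $0\to R^n\xrightarrow{\phi}R^n\to H^2(\Delta)\to 0$ exists with $\mydet^{-1}_R H^2(\Delta)=(\det\phi)$, and $(\det\phi)$ is proper whenever $H^2(\Delta)\neq 0$ simply because over a local ring $\det\phi\in R^\times$ forces $\phi$ to be an isomorphism.
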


\begin{proof} Let $a,b\in R$ be a system of parameters so that $k\simeq R/(a,b)$. The short exact sequences
$$ 0\to H^i(\Delta)\otimes_RR/a\to H^i(\Delta\otimes_R^{\mathbb L}R/a)\to H^{i+1}(\Delta)_a\to 0$$
and the fact that $H^1(\Delta)$ is torsion free show that $H^i(\Delta\otimes_R^{\mathbb L}R/a)=0$ for $i\neq 1,2$. The map in 3) factors
$$H^1(\Delta)\to H^1(\Delta)\otimes_RR/a\to H^1(\Delta\otimes_R^{\mathbb L}k)$$ 
and hence the image $\mathcal{L}_a$ of $\mathcal{L}$ in $H^1(\Delta)\otimes_RR/a$ is nonzero. By Nakayama's Lemma for the discrete valuation ring $R_{(a)}$ and the fact that $H^1(\Delta)_{(a)}$ is $R_{(a)}$-torsion free and of $R_{(a)}$-rank one the element $\mathcal{L}$ is a basis of $H^1(\Delta)_{(a)}$. From 2) we find $H^2(\Delta)_{(a)}=0$ and hence that $H^1(\Delta\otimes_R^{\mathbb L}R/a)_{(a)}$ has $R_{(a)}/a$-rank one.

We now have the perfect complex $\Delta':=\Delta\otimes_R^{\mathbb L}R/a$ over the discrete valuation ring $R':=R/a$ with uniformizer $b$ and fraction field  $F'=R_{(a)}/a$, cohomologically concentrated in degrees $1,2$, such that $H^2(\Delta')$ is $R'$-torsion and  $H^1(\Delta')$ is of rank one and $R'$-torsion free. This last claim follows from the isomorphism
$$ H^0(\Delta\otimes_R^{\mathbb L}k)\simeq H^0(\Delta'\otimes_{R'}^{\mathbb L}R'/b)\simeq H^1(\Delta')_b$$
and assumption 4).
 By assumption 3) the image of $\mathcal{L}_a\in H^1(\Delta')$ under
$$ H^1(\Delta')\to H^1(\Delta')\otimes_{R'}R'/b \to H^1(\Delta'\otimes_{R'}^{\mathbb L}R'/b)\simeq H^1(\Delta\otimes_R^{\mathbb L}k)$$
is nonzero, hence by Nakayama's Lemma $\mathcal L_a$ is an $R'$-basis of $H^1(\Delta')$ (as we already know that $H^1(\Delta')$ is free of rank one). 
From 2) we know that $\mathcal L\otimes 1$ is an $R'$-basis of 
$$ (\mydet^{-1}_R\Delta)\otimes_RR'\simeq \mydet^{-1}_{R'}(\Delta')\simeq H^1(\Delta')\otimes_{R'} \mydet^{-1}_{R'}H^2(\Delta') $$
and the image of $\mathcal L\otimes 1$ in
\begin{equation} (\mydet^{-1}_R\Delta)\otimes_RF'\simeq \mydet^{-1}_{F'}(H^1(\Delta)_{(a)}/a)\simeq H^1(\Delta')\otimes_{R'} F'\label{la}\end{equation}
coincides with $\mathcal L_a$. It follows that the $R'$-order ideal of the torsion module $H^2(\Delta')$ equals $R$ and hence that $H^2(\Delta')=0$. We deduce that
$$ 0=H^2(\Delta')\otimes_{R'}{R'}/b\simeq H^2(\Delta'\otimes_{R'}^{\mathbb L}R'/b)\simeq H^2(\Delta\otimes_R^{\mathbb L}k).$$
Since $\mathcal L_a$ is an $R'$-basis of 
$$H^1(\Delta')\simeq \mydet^{-1}_{R'}(\Delta')$$
the image $\bar{\mathcal L}$ of $\mathcal L_a$ in 
$$H^1(\Delta\otimes_R^{\mathbb L}k)\xleftarrow{\sim}H^1(\Delta')\otimes_{R'}R'/b\simeq \mydet^{-1}_{R'}(\Delta')\otimes_{R'}R'/b\simeq \mydet^{-1}_{R}(\Delta)\otimes_{R}k$$
is a $k$-basis. But $\bar{\mathcal L}$ is also the image of $\mathcal L\otimes 1\in\mydet^{-1}_{R}(\Delta)\otimes_{R}k$ as we saw in (\ref{la}). This concludes the proof.
\end{proof} 

\begin{remark} It follows from the proof of Lemma \ref{descentlemma} that $H^2(\Delta)=0$ and $H^1(\Delta)$ is free with basis $\mathcal L$ under the assumptions of Lemma \ref{descentlemma}.
\end{remark}

\begin{prop} Let $\gamma\in V_L(\varphi)$ be such that its image in $V_{L_{\csp}}(\varphi)$ is an $\co$-basis of $T_{\co_{L_{\csp}}}(\varphi)$ and let $\ff$ be a multiple of $\ff_{\varphi}$. Let $z_{p^\infty\ff}(\gamma)$ be the image of $z_{p^\infty\ff}$ under
\begin{align*} &H^1_{p^\infty\ff}(\bz_p(1))\xrightarrow{\gamma}H^1_{p^\infty\ff}(\bz_p(1))\otimes V_{L_{\csp}}(\varphi)\simeq H^1_{p^\infty\ff}(V_{L_{\csp}}(\varphi)(1))\\
\to & H^1(\co_K[\frac{1}{p}],V_{L_{\csp}}(\varphi)(1))\to H^1(\co_K[\frac{1}{p\ff}],V_{L_{\csp}}(\varphi)(1))
 \end{align*}
 and assume that $L_{p\ff}(\bar{\varphi},1)\neq 0$. Then  there is an equality of invertible $\co$-submodules 
$$ \co\cdot z_{p^\infty\ff}(\gamma)=  \mydet^{-1}_{\co} R\Gamma(\co_{K}[\frac{1}{p\ff}], T_{\co_{L_{\csp}}}(\varphi)(1))$$
of 
\begin{align*} \mydet^{-1}_{ {L_{\csp}}} R\Gamma(\co_{K}[\frac{1}{p\ff}], V_{L_{\csp}}(\varphi)(1)) \simeq \ &\mydet_{L_{\csp}} H^1(\co_{K}[\frac{1}{p\ff}], V_{L_{\csp}}(\varphi)(1))\\
\simeq \ & H^1(\co_{K}[\frac{1}{p\ff}], V_{L_{\csp}}(\varphi)(1)).\end{align*}
Moreover, the Selmer group $${\rm{Sel}}(K,T_{\co_{L_{\csp}}}(\varphi)(1))$$ is finite. 
\label{descent}\end{prop}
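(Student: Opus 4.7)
My strategy is to descend the main conjecture of Theorem \ref{mc} from $\Lambda_\co$ to $\co$, using Lemma \ref{descentlemma}. The decomposition $G_{p^\infty\ff} \simeq \Delta\times\Gamma$ together with $\chi = \rho^{-1}|_\Delta$ gives $\rho = \chi^{-1}\cdot\rho_\Gamma$, where $\rho_\Gamma := \rho|_\Gamma$ determines an $\co$-algebra surjection $\pi: \Lambda_\co \twoheadrightarrow \co$ sending $\bar g \mapsto \rho_\Gamma(g)$. Its kernel $I$ is a height-two prime generated by the regular sequence $\{\gamma_i - \rho_\Gamma(\gamma_i)\}_{i=1,2}$ for topological generators of $\Gamma \simeq \bz_p^{\oplus 2}$. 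The $G_K$-action $g \mapsto \chi(g)^{-1}\bar g$ on $\Lambda(\chi)$ specialises under $\pi$ to $\chi(g)^{-1}\rho_\Gamma(g) = \rho(g)$, so $\Lambda(\chi) \otimes^{\mathbb L}_{\Lambda_\co} \co \simeq T_{\co_{L_\csp}}(\varphi)$ as $G_K$-modules, and the standard Iwasawa-descent base change yields
$$R\Gamma(\co_K[\tfrac{1}{p\ff}], \Lambda(\chi)(1)) \otimes^{\mathbb L}_{\Lambda_\co} \co \simeq R\Gamma(\co_K[\tfrac{1}{p\ff}], T_{\co_{L_\csp}}(\varphi)(1)).$$

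Next I apply Lemma \ref{descentlemma} with $R := (\Lambda_\co)_I$ (a two-dimensional regular local ring with fraction field $Q(\Lambda_\co)$ and residue field $L_\csp$), $\Delta := R\Gamma(\co_K[\frac{1}{p\ff}], \Lambda(\chi)(1))_I$, and $\mathcal L := z_{p^\infty\ff}$. Hypothesis~(2) is Theorem \ref{mc}. Hypothesis~(4) is $H^0(K, V_{L_\csp}(\varphi)(1)) = 0$, which holds because $V_{L_\csp}(\varphi)(1)$ is a nontrivial one-dimensional Galois representation (the Hecke character $\varphi$ has nonzero infinity type, so its Frobenius eigenvalues are weight-one Weil numbers). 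Hypothesis~(3) is where the $L$-value non-vanishing enters: the image $\bar{\mathcal L}$ in $H^1(\Delta\otimes^{\mathbb L}_R L_\csp) \simeq H^1(\co_K[\frac{1}{p\ff}], V_{L_\csp}(\varphi)(1))$ equals $z_{p^\infty\ff}(\gamma)$, whose dual exponential image is $L_{p\ff}(\bar\varphi, 1)\cdot\gamma \neq 0$ by Prop. \ref{reciprocity}. Hypothesis~(1)---that $H^1(\Delta)$ is torsion-free of $R$-rank one and $H^2(\Delta)$ is $R$-torsion---follows from the general structure of two-variable Iwasawa cohomology over the imaginary quadratic field $K$: pseudo-nullity of $H^2$ (weak Leopoldt), together with the absence of nonzero pseudo-null submodules in $H^1$.

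Lemma \ref{descentlemma} then yields $H^2(\Delta) = 0$ over $R$ and identifies the image of $z_{p^\infty\ff}$ with $z_{p^\infty\ff}(\gamma)$ as an $L_\csp$-basis of $\mydet^{-1}_{L_\csp} R\Gamma(\co_K[\frac{1}{p\ff}], V_{L_\csp}(\varphi)(1))$. Since $R\Gamma(\co_K[\frac{1}{p\ff}], \Lambda(\chi)(1))$ is perfect over $\Lambda_\co$ and $I = \ker(\pi)$ is generated by a regular sequence, derived base change along $\pi$ preserves $\mydet^{-1}$; combined with the base-change identification above, this produces the integral identity $\co \cdot z_{p^\infty\ff}(\gamma) = \mydet^{-1}_\co R\Gamma(\co_K[\frac{1}{p\ff}], T_{\co_{L_\csp}}(\varphi)(1))$. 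For the Selmer group: after descent $H^1(\co_K[\frac{1}{p\ff}], T_{\co_{L_\csp}}(\varphi)(1))$ is $\co$-free of rank one with basis $z_{p^\infty\ff}(\gamma)$, and $\mathrm{Sel}(K, T_{\co_{L_\csp}}(\varphi)(1))$ is cut out of it by Bloch--Kato local Kummer conditions; the non-vanishing $\exp^*(z_{p^\infty\ff}(\gamma)) \neq 0$ forces $z_{p^\infty\ff}(\gamma)$ to lie outside the local Selmer subspace at some $v \mid p$, so the Selmer group has generic rank zero over $\co$ and, being finitely generated, is therefore finite.

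The main obstacle is verifying the structural hypothesis~(1) over the localised two-variable Iwasawa algebra---specifically, the absence of nonzero pseudo-null submodules in $H^1$---which must be extracted from the literature on Iwasawa modules for imaginary quadratic fields. A secondary technical point is transferring the conclusion of Lemma \ref{descentlemma} (which is naturally formulated at the level of the residue field $L_\csp$) to the $\co$-integral equality, for which one needs the perfectness of $R\Gamma(\co_K[\frac{1}{p\ff}], \Lambda(\chi)(1))$ over $\Lambda_\co$ and the compatibility of $\mydet^{-1}$ with derived base change along the regular quotient $\Lambda_\co \twoheadrightarrow \co$.
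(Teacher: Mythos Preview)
Your proof is correct and follows essentially the same route as the paper: descend Theorem~\ref{mc} along the augmentation $\Lambda_\co\to\co$ by applying Lemma~\ref{descentlemma} at the localization at $\ker\pi$, verifying hypotheses (1)--(4) from the same sources (the paper cites \cite{kato00}[\S15] for~(1), Theorem~\ref{mc} for~(2), Prop.~\ref{reciprocity} for~(3), and nontriviality of the Galois action for~(4)), and then obtaining the integral equality from base-change compatibility of determinants, exactly as in the paper's display~(\ref{invertible}).

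One imprecision worth noting in your Selmer argument: the claim that $H^1(\co_K[\frac{1}{p\ff}], T_{\co_{L_\csp}}(\varphi)(1))$ is $\co$-free of rank one with basis $z_{p^\infty\ff}(\gamma)$ does not follow from Lemma~\ref{descentlemma}, which only controls the cohomology at the residue-field level $L_\csp$ (and the Remark after the lemma concerns the $R$-level, not the $\co$-level). The paper instead argues at the $V$-level: Lemma~\ref{descentlemma} gives $\dim_{L_\csp}H^1(\co_K[\frac{1}{p\ff}],V_{L_\csp}(\varphi)(1))=1$ and $H^2=0$, and then Prop.~\ref{reciprocity} shows the map of Prop.~\ref{key}(a) is nonzero, hence an isomorphism; the central triangle in~(\ref{wichtig}) and Lemma~\ref{rgf} then give $H^1_f(K,V_{L_\csp}(\varphi)(1))=0$, so the Selmer group is $\co$-torsion and therefore finite. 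Your argument can be repaired along these lines without the unjustified freeness claim.
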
 

\begin{proof} The character $\chi\rho$ is trivial on $\Delta$, hence induces a character $\chi\rho:\Gamma\to\co^\times$ and a ring homomorphism $\chi\rho:\bz_p[[\Gamma]]\to \co$. We obtain an induced ring homomorphism
$$ \kappa:=\id\otimes\chi\rho:\Lambda_\co\simeq \co\otimes_{\bz_p}\bz_p[[\Gamma]]\to\co$$
so that there is an $\co$-linear isomorphism of $G_{p^\infty\ff}$-modules
$$ \Lambda(\chi)\otimes_{\Lambda_\co,\kappa}\co\simeq T_{\co_{L_{\csp}}}(\varphi).$$
This induces an isomorphism of invertible $\co$-modules
\begin{align}&\left(\mydet^{-1}_{\Lambda_\co} R\Gamma(\co_{K}[\frac{1}{p\ff}], \Lambda(\chi)(1))\right)\otimes_{\Lambda_\co,\kappa}\co\notag\\ \simeq & 
\mydet^{-1}_{ \co}\left( R\Gamma(\co_{K}[\frac{1}{p\ff}], \Lambda(\chi)(1))\otimes^\bl_{\Lambda_\co,\kappa}\co\right) \notag\\
\simeq & \mydet^{-1}_{ \co}\left( R\Gamma(\co_{K}[\frac{1}{p\ff}], \Lambda(\chi)(1)\otimes_{\Lambda_\co,\kappa}\co)\right) \notag\\
\simeq & \mydet^{-1}_{ \co} R\Gamma(\co_{K}[\frac{1}{p\ff}], T_{\co_{L_{\csp}}}(\varphi)(1)).
\label{invertible}\end{align}
We apply Lemma \ref{descentlemma} in the following situation. Denote by $\fq$ the kernel of $\kappa$ and set
\begin{align*}  R:=\Lambda_{\co,\fq}; \quad \Delta:= R\Gamma(\co_{K}[\frac{1}{p\ff}], \Lambda(\chi)(1))_\fq;\quad  \mathcal L:=z_{p^\infty\ff}. \end{align*}
Then we have
\begin{align*}  k\simeq L_{\csp};\quad \Delta\otimes_R^{\mathbb L}k\simeq  R\Gamma(\co_{K}[\frac{1}{p\ff}], V_{\co_{L_{\csp}}}(\varphi)(1));\quad  \bar{\mathcal L}=z_{p^\infty\ff}(\gamma). \end{align*}
For assumption 1) of Lemma \ref{descentlemma} we refer to \cite{kato00}[\S15], assumption 2) is the $\fq$-localization of Theorem \ref{mc}, assumption 3) follows from $L_{p\ff}(\bar{\varphi},1)\neq 0$ and Prop. \ref{reciprocity} and assumption 4) 
$$H^0(\co_K[\frac{1}{p\ff}],V_{L_{\csp}}(\varphi)(1))=0$$
holds since the $G_K$-action on $V_{L_{\csp}}(\varphi)(1)$ is nontrivial. Lemma \ref{descentlemma} implies that the image of $z_{p^\infty\ff}\otimes 1$ in 
$$\mydet^{-1}_{L_{\csp}} R\Gamma(\co_{K}[\frac{1}{p\ff}], V_{\co_{L_{\csp}}}(\varphi)(1))\simeq H^1(\co_K[\frac{1}{p\ff}],V_{L_{\csp}}(\varphi)(1))$$
under the isomorphisms in (\ref{invertible}) coincides with $z_{p^\infty\ff}(\gamma)$. On the other hand it follows from Theorem \ref{mc} that $z_{p^\infty\ff}\otimes 1$ is an $ \co$-basis of the invertible $\co$-module (\ref{invertible}). This proves the first part of Prop. \ref{descent}.

Lemma \ref{descentlemma} also includes the vanishing of $H^2(\co_K[\frac{1}{p\ff}],V_{L_{\csp}}(\varphi)(1))$ and the fact that
$$ \dim_{L_\csp}H^1(\co_K[\frac{1}{p\ff}],V_{L_{\csp}}(\varphi)(1))=1.$$
Prop. \ref{reciprocity} then implies that the map in Prop. \ref{key}a) is nonzero and hence an isomorphism. The central vertical triangle in (\ref{wichtig}) shows that 
$$R\Gamma_f(K,V_{L_{\csp}}(\varphi)(1))=0.$$
By Lemma \ref{rgf} we have $ {\rm{Sel}}(K,T_{\co_{L_{\csp}}}(\varphi)(1))_{\bq_p}\simeq H^1_f(K,V_{L_{\csp}}(\varphi)(1))=0$.
\end{proof}

\begin{remark} 
In the situation of Prop. \ref{descent} the finiteness of the Mordell-Weil group is due to Coates and Wiles \cite{coates77}, Arthaud \cite{arthuad78} and Rubin \cite{rubin81}. For $L=K$ the finiteness of the Tate-Shafarevich group is due to Rubin \cite{rubin87} and his approach likely generalises. The above complementary approach via Lemma \ref{descentlemma} is essentially based on the arguments in \cite{kato00}[\S15].
\end{remark}

\subsection{Proof of Theorem \ref{main2}}\label{sec:ab} We have $K\subseteq \End_LH^0(A,\Omega_{A/K})=L$ and the CM type of $A$ is induced from $K$. Since $\varphi(\alpha)=\alpha$ for $\alpha\equiv 1 \mod \ff_\varphi$ the character $\varphi\circ N_{F/K}$ of the id\`ele group of $F:=K(\ff_\varphi)$ takes values in $K$, hence arises from an elliptic curve $E/F$ with CM by $\co_K$. By \cite{gos}[(4.4), (4.8)] and our assumption that $A/K$ is simple, $A$ is an isogeny factor of $B:=\Res_{F/K}E$. An argument as in the proof of Prop. \ref{bcomp} shows that the motivic structure associated to $\varphi$ in section \ref{sec:hecke} is isomorphic to the rational structure $H^1(A)$. We then verify the assumptions of Prop. \ref{keyabelian} for all primes $p$. We may choose
$$ T_{\co_{L_{\csp}}}(\varphi)(1)\simeq H^1(A_{\bar{K}},\bz_p(1))\simeq T_p({^t}\!A)$$
and hence the finiteness of $\Sha(A/K)_{p^\infty}$ and of $A(K)$ follow from Prop. \ref{descent}. The element $z:=z_{p^\infty\ff}(\gamma)$ of Prop. \ref{descent} satisfies the assumptions of Prop. \ref{keyabelian}. Indeed, assumption a) follows from Prop. \ref{descent} and assumptions b) and c) follow from Prop. \ref{reciprocity} if we choose $\gamma$ to be an $L$-basis of $H^1(A(\bc),\bq)$ which is also a $\co_{L_p}$-basis of  $H^1(A(\bc),\bz_p)$. The finiteness of $\Sha(A/K)$ then follows from the global 
formula for its cardinality given by Thm. \ref{main2}. This concludes the proof of Thm. \ref{main2}.

\bigskip

We next specialize Thm. \ref{main2} to the situation considered in \cite{bugross85}. Let $q\equiv 3\mod 4$ be a prime number and set $K=\bq(\sqrt{-q})$. Let  $H$ be the Hilbert class field of $K$ and $E/H$ an elliptic curve with $j$-invariant $j(\co_K)$ whose Hecke character is fixed by all $\sigma\in G:=\Gal(H/K)$. Set 
$$B=\Res_{H/K}E;\quad L=\End_K(B)_\bq.$$ 
It was shown in \cite{gross1}[Thm. 15.2.5] that $L$ is a CM field. Let $\varphi$ be the Serre-Tate character of $B/K$.
%Let $A/K$ be the abelian variety $K$-isogenous to $B/K$ with $\End(A/K)=\co_L$.
\begin{prop} Assume $L(\bar{\varphi},1)\neq 0$ and $\End_K(B)=\co_L$. Then Conjecture (12.3) of \cite{bugross85} holds true for $B/K$.
\label{bugr}\end{prop}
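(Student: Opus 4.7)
The plan is to deduce Prop.~\ref{bugr} as a direct specialization of Thm.~\ref{main2}, the only substance being the translation of the terms of the $L$-equivariant formula of Thm.~\ref{main2} into the format of \cite{bugross85}[Conj.~12.3]. First I would verify the hypotheses of Thm.~\ref{main2}: the CM abelian variety $A=B/K$ has $\End_K(B)=\co_L$ by assumption, and $[L:\bq]=2\dim(B)=2[H:K]$ follows from the fact that $B\sim A_1\times\cdots\times A_r$ decomposes into simple $K$-isogeny factors each having CM by a CM field of degree $2\dim(A_i)$ over $\bq$, as in \cite{gros}[\S4]. The Serre-Tate character of $B$ is $\varphi$ by construction, and $L(\bar\varphi,1)\neq 0$ is given. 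Thm.~\ref{main2} therefore applies and yields finiteness of $\Sha(B/K)$ and $B(K)$ together with the identity
\[ \frac{L(\bar\varphi,1)}{\Omega}=\frac{|\Sha(B/K)|_L}{|B(K)|_L\cdot|{^t\!}B(K)|_L}\cdot\prod_v|\Phi_v|_L\cdot\fa(\Omega) \]
in the group of fractional ideals of $L$.

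Next I would match each term with the corresponding term of \cite{bugross85}[Conj.~12.3]. Since $E/H$ has $j$-invariant $j(\co_K)$ and its Hecke character is $G$-stable, we have $B\sim\Res_{H/K}E$ and the period lattice of $B/K$ is built from the periods of $E/H$ indexed by the archimedean places $v\mid\infty$ of $H$; via the recipe of Remark \ref{periodremark} and Def.~\ref{perioddef} the period $\Omega\in L_\br^\times$ coincides, up to a factor in $L^\times$ absorbed into $\fa(\Omega)$, with the product of the complex periods $\Omega_v$ of a N\'eron differential of $E$ against a $\bz$-basis of $H_1(E(H_v),\bz)$; this product is the period appearing in \cite{bugross85}[\S12]. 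The ideal $\fa(\Omega)$ in our formula accounts both for the discriminant factor $\sqrt{D_{H/K}}$ from Remark~\ref{periodremark} and for the index of any non-free invertible $\co_K$-modules (such as $\cd_{H/K}^{-1}$ or $H^0(\ce,\Omega_{\ce/\co_H})$) that appear; in the setting of Buhler-Gross with $H/K$ the Hilbert class field, these ideals have a specific explicit form which I would work out case by case to match their expression.

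The factor $|\Phi_v|_L$ is the localization at $L$ of the Tamagawa numbers of $B/K$, which by the compatibility $\Phi_v(B/K)=\prod_{w\mid v}\Phi_w(E/H)$ for restriction of scalars becomes the Tamagawa product of $E/H$ appearing in Buhler-Gross. Since $E/H$ acquires its Hecke character from $K$, the Mordell-Weil term $|B(K)|_L\cdot|{^t\!}B(K)|_L$ translates via the isomorphism $B(K)=E(H)$ (with $\co_L$-action on the right coming from its embedding into $\End_K(B)$) into the Mordell-Weil term of \cite{bugross85}, and similarly for $\Sha$. The hard step is the period identification: showing that our $\Omega\cdot\fa(\Omega)$ equals the period fractional ideal used by Buhler-Gross on the nose (not merely up to $L^\times$) requires tracking the factor $\sqrt{D_{H/K}}$ and comparing conventions for the invariant differential on $E/H$ with the one implicit in \cite{bugross85}[\S12]; I expect this to reduce to a class-field-theoretic identity relating $\fa(\Omega)$ to the different/discriminant contributions already isolated in Remark~\ref{periodremark}, after which the two formulations agree.
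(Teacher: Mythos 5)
Your outline of the reduction to Theorem~\ref{main2} is fine, and you correctly identify the period comparison as the substance of the proposition, but your sketch of that step would not close the argument, and in one place it points in the wrong direction. The paper does not handle the period discrepancy via discriminant/different contributions of Remark~\ref{periodremark}: in this situation $H/K$ is the Hilbert class field, so $D_{H/K}$ and $\cd_{H/K}$ are trivial and those factors contribute nothing. The actual source of the ideal $\fa(\Omega)$ is entirely different. The paper first shows, using the structure $\co_L=\End_K(B)=\bigoplus_{\sigma\in G}\Hom_H(E^\sigma,E)\cdot\sigma$ from \cite{gross1}[(15.1.5)] and a local-at-$p$ argument with prime-to-$p$ isogenies, that $H_1(B(\bc),\bz)=\co_L\cdot\gamma$ is free of rank one over $\co_L$, while the de~Rham side $H^0(\cb,\Omega_{\cb/\co_K})$ is only free after extending scalars to $\co_M$ with $M=LH$. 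One then invokes the Taniyama cocycle $u_\sigma\in\co_M^\times$ of \cite{bugross85}[(10.8)] with $u_\sigma\omega^\sigma=\omega$, forms the $K$-rational differential $\omega_B=\sum_{\sigma}\omega^\sigma=v^{-1}\omega$ with $v=\sum_\sigma u_\sigma^{-1}\in M^\times$, and obtains the precise relation $\Omega=v^{-1}\Omega_\iota$; the cocycle condition $u_{\sigma\tau}=u_\sigma^\tau u_\tau$ and unramifiedness of $M/L$ give that $(v)$ descends to the fractional $\co_L$-ideal which is exactly $\fa(\Omega)$. Finally, the comparison with \cite{bugross85}[Conj.~(12.3)] uses their Prop.~11.1 producing $m\in M$ with $L(\bar\varphi^\alpha,1)/\Omega_\iota=m^\alpha$, whence $t:=L(\bar\varphi,1)/\Omega=mv$ and $(t)=\fm_B\cdot\fa(\Omega)$, making the two formulations manifestly equivalent. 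Your proposal neither introduces the units $u_\sigma$ and the element $v$, nor uses \cite{bugross85}[Prop.~11.1], and deferring the period identification to ``a class-field-theoretic identity'' of discriminant type would not produce $\fa(\Omega)$, which is a unit-cocycle ideal rather than a ramification ideal.
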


\begin{proof} The main work in this proof consists in matching the period of Def. \ref{perioddef} to that defined in \cite{bugross85}. The field $\bq(j(\co_K))$ has a unique real embedding as the class number $h=[H:K]$ is odd (see Remark \ref{r1}). Together with our given embedding $K\subseteq\bc$ this gives a distinguished embedding $\iota:K(j(\co_K))=H\subseteq \bc$. Following \cite{bugross85}[(10.2)] define $\gamma$ and $\omega$, uniquely up to $\co_K^\times$, by
$$ H_1(E^\iota(\bc),\bz)=\co_K\cdot\gamma;\quad H^0(\ce,\Omega_{\ce/\co_H})=\co_H\cdot\omega$$
and put
$$\Omega_\iota:=\int_\gamma\iota(\omega)\in \bc^\times/\co_K^\times.$$
For $\sigma\in G$ we have isomorphisms of $\co_K$-modules
\begin{equation} \Hom_H(E^\sigma,E)\otimes_{\co_K}H_1(E^{\iota\sigma}(\bc),\bz)\simeq H_1(E^{\iota}(\bc),\bz);\quad f\otimes\delta\mapsto f_*\delta.\label{isog}\end{equation}
Indeed this can be checked locally at each prime $p$ and for any $p$ there exists an isogeny $f:E^\sigma\to E$ of degree prime to $p$.
Since
\begin{equation} \co_L=\End_K(B)=\bigoplus_{\sigma\in G} \Hom_H(E^\sigma,E)\cdot\sigma\label{olstructure}\end{equation}
by \cite{gross1}[(15.1.5)] we see that 
$$ H_1(B(\bc),\bz)\simeq \bigoplus_{\sigma\in G} H_1(E^{\iota\sigma}(\bc),\bz) =\co_L\cdot\gamma $$
is free of rank one over $\co_L$ (see also \cite{bugross85}[proof of Prop. 10.12]).

The period $\Omega\in L_\br^\times$ and fractional $\co_L$-ideal $\fa(\Omega)$ of Def. \ref{perioddef} for $B/K$ satisfy
$$ H^0(\cb,\Omega_{\cb/\co_K})\otimes_{\co_K} \cd_{K/\bq}^{-1}=\Omega \cdot \fa(\Omega)\cdot \Hom_\bz(H_1(B(\bc),\bz),\bz)$$
under the Deligne period isomorphism $\mathrm{per}_B$, or equivalently
\begin{align*} H^0(\cb,\Omega_{\cb/\co_K})=&\Omega \cdot \fa(\Omega)\cdot \Hom_\bz(H_1(B(\bc),\bz),\bz)\otimes_{\co_K} \cd_{K/\bq}\\
=&\Omega \cdot \fa(\Omega)\cdot \Hom_{\co_K}(H_1(B(\bc),\bz),\co_K)
\end{align*}
under the $K_\br=\bc$-valued integration pairing. Define
\begin{equation}\gamma^*\in\Hom_{\co_K}(H_1(B(\bc),\bz),\co_K)\label{dual}\end{equation}
by
$$\gamma^*(\delta)=\begin{cases} c & \text{if $\delta=c\,\gamma\in H_1(E^\iota(\bc),\bz)$ with $c\in \co_K$} \\ 0 & \text{if $\delta\in H_1(E^{\iota\sigma}(\bc),\bz)$ with $\sigma\neq 1$.}  \end{cases}$$
Again by (\ref{isog}) and (\ref{olstructure}) the element $\gamma^*$ is a $\co_L$-basis of (\ref{dual}). However, $H^0(\cb,\Omega_{\cb/\co_K})$ need not be free over $\co_L$.
Following \cite{bugross85} let $M=LH$ be the composite field, an extension of degree $h^2$ of $K$. Since $H/K$ is unramified we have $\co_M\simeq \co_L\otimes_{\co_K}\co_H$. There is an isomorphism of $\co_M$-modules
\begin{equation} H^0(\cb,\Omega_{\cb/\co_K})\otimes_{\co_K}\co_H\simeq H^0(\cb_{\co_H},\Omega_{\cb_{\co_H}/\co_H})\simeq \bigoplus_{\sigma\in G} H^0(\ce^\sigma,\Omega_{\ce^\sigma/\co_H})\label{nd}\end{equation}
and we have isomorphisms of $\co_K$-modules
$$ \Hom_H(E^\sigma,E)\otimes_{\co_K}H^0(\ce,\Omega_{\ce/\co_H})\simeq H^0(\ce^\sigma,\Omega_{\ce^\sigma/\co_H});\quad f\otimes\eta\mapsto f^*\eta.$$
Again this can be checked locally at each prime $p$. From (\ref{olstructure}) we see that (\ref{nd}) is free of rank one over $\co_M$ with basis $\omega$. Extending scalars from $\co_K$ to $\co_H$ we then have an identity of free, rank one $\co_M$-modules
\begin{equation} H^0(\cb,\Omega_{\cb/\co_K})\otimes_{\co_K}\co_H=\Omega \cdot \fa(\Omega)\cdot \Hom_{\co_H}(H_1(B(\bc),\bz)\otimes_{\co_K}\co_H,\co_H)\notag\end{equation} 
under an $H_\br$-valued integration pairing $\mathrm{per}_{B_H}$. Since
$$\int_\delta\omega=0$$
for $\delta\in H_1(E^{\iota\sigma}(\bc),\bz), \ \sigma\neq 1$ we have
$$ \mathrm{per}_{B_{H_\iota}}(\omega)=\Omega_\iota\cdot\gamma^*$$
where $\Omega_\iota\in K_\br^\times\subset L_\br^\times\simeq M_\iota^\times$ with $M_\iota=M\otimes_{H,\iota}\bc$. We obtain an identity of invertible $\co_M$-submodules of $M_\iota$
$$ \co_M\cdot\Omega_\iota=\Omega \cdot \fa(\Omega)\otimes_{\co_L}\co_M.$$
The element $\Omega\in L_\br^\times$ is the period of some $K$-rational differential on $B$, for example we can take
$$\omega_B:=\sum_{\sigma\in G}\omega^\sigma\in H^0(\cb,\Omega_{\cb/\co_K}).$$
In order to compute $\fa(\Omega)$ recall that by \cite{bugross85}[(10.8)] there exist units $u_\sigma\in\co_M^\times$ for each $\sigma\in G$ such that
$$ u_\sigma\cdot \omega^\sigma=\omega; \quad u_{\sigma\tau}=u_\sigma^\tau\cdot u_\tau.$$
Indeed, the differential $\omega^\sigma$ is an $\co_H$-basis of $H^0(\ce^\sigma,\Omega_{\ce^\sigma/\co_H})$ and an $\co_M$-basis of (\ref{nd}), by the same reasoning as used above for $\omega$. We have 
$$\omega_B=\sum_{\sigma\in G}\omega^\sigma= \left(\sum_{\sigma\in G}u_\sigma^{-1}\right)\cdot\omega=:v^{-1}\cdot\omega$$ 
with $v\in M^\times$ and
\begin{equation}\Omega=v^{-1}\cdot\Omega_\iota.\label{periodidentity}\end{equation} 
Since $(v^\tau)=(u_\tau^{-1}v)=(v)$ for $\tau\in G$ and $M/L$ is unramified the principal $\co_M$-ideal $(v)$ descends to a fractional $\co_L$-ideal $\fa(\Omega)$. By \cite{bugross85}[Prop. 11.1] there is an element $m\in M$ so that
$$ \frac{L(\bar{\varphi}^\alpha,1)}{\Omega_\iota}=m^\alpha \quad \forall\alpha\in \Hom_{H,\iota}(M,\bc)=\Hom_{K,\iota}(L,\bc)$$ 
and such that the $\co_M$-ideal $(m)$ is $G$-invariant, hence descends to a fractional $\co_L$-ideal $\fm_B$. Conjecture (12.3) of \cite{bugross85} states that
\begin{equation} \fm_B=  \fg_{\Sha}\cdot\prod_v \fg_v/ \fg_{tor}^{1+c} \label{bugrossconj}\end{equation}
where
$$ \fg_{\Sha}:=|\Sha(B/K)|_L;\quad \fg_{tor}:=|B(K)|_L;\quad \fg_v:=|\Phi_v|_L$$
and $c$ denotes the complex conjugation of $L$.  On the other hand Theorem \ref{main2} states that
\begin{equation}t:=\frac{L(\bar{\varphi},1)}{\Omega}\in L^\times;\quad (t)= \fg_{\Sha}\cdot\prod_v \fg_v/ \fg_{tor}^{1+c}\cdot\fa(\Omega).\label{main3}\end{equation}
By (\ref{periodidentity}) we have $t=mv$  and $(t)=\fm_B\cdot\fa(\Omega)$ and hence we find that (\ref{bugrossconj}) and (\ref{main3}) are equivalent.
\end{proof}

\begin{remark}  By \cite{montgomery-rohrlich} the assumption $L(\bar{\varphi},1)\neq 0$ holds if $q\equiv 7\mod 8$ and $E=A(q)$ is the curve of conductor $(\sqrt{-q})$ studied in \cite{gross1}. The condition $\End_K(B)=\co_L$ may or may not hold. In \cite{bugross85}[Sec.3] examples are given for both maximal and non-maximal $\End_K(B)$.

\end{remark}

\subsection{Proof of Theorem \ref{main}} \label{sec:ell} The proof of Theorem \ref{main} amounts to the conjunction of Theorem \ref{main2} for all characters $\varphi_1,\dots,\varphi_r$ in Prop. \ref{CM}, restriction of coefficients from $L$ to $K$, and isogeny invariance of the $K$-equivariant BSD conjecture. We present these arguments in the following sequence of Lemmas. From now on the notation of Prop. \ref{CM} will be in effect. In particular $L$ denotes the semisimple algebra
$$ L=L_1\times\cdots\times L_r$$
with maximal order 
$$ \co_{L}:=\co_{L_1}\times\cdots\times\co_{L_r}.$$
For the Serre-Tate character $\bar{\varphi}=(\bar{\varphi}_1,\dots,\bar{\varphi}_r)$ of ${^t}\!B$ we denote by
$$ L(\bar{\varphi},s):=(\left(L(\bar{\varphi}_\tau,s)\right)_\tau\in\prod_{\tau\in\Hom(L,\bc)}\bc\simeq L_\bc$$
its $L_\bc$-valued L-function. For any prime number $p$ define a free, rank one $G_K$-invariant $\co_{L_{p}}$-submodule 
$$ T_p(\varphi):=\prod_{\csp\mid p}T_{\co_{L_{1,\csp}}}(\varphi_1)\times\cdots\times \prod_{\csp\mid p}T_{\co_{L_{r,\csp}}}(\varphi_r) 
\subseteq V_p(\varphi)$$
of $V_p(\varphi)$ introduced in Prop. \ref{bcomp}.

\begin{lemma} Let $\gamma\in V_{L}(\varphi)$ be such that its image in $V_p(\varphi)$ is a $\co_{L_p}$-basis of $T_p(\varphi)$ and let $\ff$ be a multiple of the conductor $\ff_B$ of $B/K$.  Let $z_{p^\infty\ff}(\gamma)$ be the image of $z_{p^\infty\ff}$ in $H^1(\co_{K}[\frac{1}{p\ff}], V_p(\varphi)(1))$ and assume that $L(\bar{\varphi},1)\neq 0$. Then there is an equality of invertible $\co_{L_p}$-submodules 
$$ \co_{L_p}\cdot z_{p^\infty\ff}(\gamma)=  \mydet^{-1}_{\co_{L_p}} R\Gamma(\co_{K}[\frac{1}{p\ff}], T_p(\varphi)(1))$$
of 
$$ \mydet^{-1}_{L_p} R\Gamma(\co_{K}[\frac{1}{p\ff}], V_p(\varphi)(1)) \simeq \mydet_{L_p} H^1(\co_{K}[\frac{1}{p\ff}], V_p(\varphi)(1)).$$
\label{descent2}\end{lemma}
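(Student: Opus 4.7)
The plan is to deduce Lemma \ref{descent2} from Proposition \ref{descent} by reducing along the primitive idempotents of $\co_{L_p}$ and assembling the resulting descended equalities. The key observation is that the entire setup in the statement is formed componentwise from the algebra decomposition $L=L_1\times\cdots\times L_r$ and, for each $i$, the decomposition $L_i\otimes_\bq\bq_p\simeq\prod_{\csp\mid p}L_{i,\csp}$.

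First, I would invoke Proposition \ref{bcomp} to identify $V_p(\varphi)$ with the direct product $\prod_i\prod_{\csp\mid p}V_{L_{i,\csp}}(\varphi_i)$ $G_K$-equivariantly, and identify $T_p(\varphi)$ with the corresponding product of lattices $T_{\co_{L_{i,\csp}}}(\varphi_i)$. This yields a canonical decomposition in the derived category of $\co_{L_p}$-modules
\[R\Gamma(\co_K[\tfrac{1}{p\ff}],T_p(\varphi)(1))\;\simeq\;\prod_{i,\csp}R\Gamma(\co_K[\tfrac{1}{p\ff}],T_{\co_{L_{i,\csp}}}(\varphi_i)(1)),\]
and, upon taking determinants over $\co_{L_p}=\prod_{i,\csp}\co_{L_{i,\csp}}$, an identification
\[\mydet^{-1}_{\co_{L_p}}R\Gamma(\co_K[\tfrac{1}{p\ff}],T_p(\varphi)(1))\;\simeq\;\prod_{i,\csp}\mydet^{-1}_{\co_{L_{i,\csp}}}R\Gamma(\co_K[\tfrac{1}{p\ff}],T_{\co_{L_{i,\csp}}}(\varphi_i)(1)).\]

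Next, I would track the element $z_{p^\infty\ff}(\gamma)$ through this decomposition. Writing the given $\gamma\in V_L(\varphi)=\prod_iV_{L_i}(\varphi_i)$ as $\gamma=(\gamma_i)_i$ and viewing its image in $V_p(\varphi)$ as the tuple $(\gamma_{i,\csp})_{i,\csp}$, each $\gamma_{i,\csp}$ is a $\co_{L_{i,\csp}}$-basis of $T_{\co_{L_{i,\csp}}}(\varphi_i)$, since $\gamma$ was assumed to be an $\co_{L_p}$-basis of $T_p(\varphi)$. Because the map $\xi\mapsto z_{p^\infty\ff}\otimes\xi$ followed by the maps used to define $z_{p^\infty\ff}(\cdot)$ is $\co_L$-linear and functorial, the image decomposes as
\[z_{p^\infty\ff}(\gamma)=\bigl(z_{p^\infty\ff}(\gamma_{i,\csp})\bigr)_{i,\csp}\in\prod_{i,\csp}H^1(\co_K[\tfrac{1}{p\ff}],V_{L_{i,\csp}}(\varphi_i)(1)).\]

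Finally, I would apply Proposition \ref{descent} to each pair $(\varphi_i,\csp)$. The hypothesis $L(\bar\varphi,1)\neq 0$, taken in $L_\bc^\times=\prod_{\tau\in\Hom(L,\bc)}\bc^\times$, means every component $L(\bar\varphi_{i,\tau},1)$ is nonzero; since the removed Euler factors $1-\bar\varphi_i(v)/Nv$ at primes $v\mid p\ff$ do not vanish by the Weil bound (the relevant infinity type gives $|\bar\varphi_i(v)/Nv|<1$), it follows that $L_{p\ff}(\bar\varphi_i,1)\neq 0$ for every $i$, verifying the hypothesis of Proposition \ref{descent}. That proposition then yields
\[\co_{L_{i,\csp}}\cdot z_{p^\infty\ff}(\gamma_{i,\csp})=\mydet^{-1}_{\co_{L_{i,\csp}}}R\Gamma(\co_K[\tfrac{1}{p\ff}],T_{\co_{L_{i,\csp}}}(\varphi_i)(1))\]
for each factor, and taking the product over $(i,\csp)$ gives the desired identity of invertible $\co_{L_p}$-submodules.

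The only real content beyond bookkeeping is verifying the compatibility in the second step: that $z_{p^\infty\ff}(\gamma)$ genuinely decomposes componentwise under the product decomposition of $V_p(\varphi)$. This reduces to checking that the compatibility isomorphism of Proposition \ref{bcomp} intertwines the map $\xi\mapsto z_{p^\infty\ff}\otimes\xi$ with its componentwise analogues, which is immediate from the $\co_L$-linearity and functoriality of the construction of $z_{p^\infty\ff}(\cdot)$. Once this is in hand, the lemma follows by direct product over all $(i,\csp)$.
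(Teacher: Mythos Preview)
Your proof is correct and follows essentially the same approach as the paper, which simply states that the lemma is immediate by combining Prop.~\ref{descent} for $\varphi=\varphi_1,\dots,\varphi_r$ and all primes $\csp\mid p$ of the fields $L_i$. You have spelled out the componentwise bookkeeping and the passage from $L(\bar\varphi,1)\neq 0$ to $L_{p\ff}(\bar\varphi_i,1)\neq 0$ in more detail than the paper does, but the underlying argument is the same.
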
 

\begin{proof} This is immediate by combining Prop. \ref{descent} for $\varphi=\varphi_1,\dots,\varphi_r$ and all primes $\csp\mid p$ of the fields $L_i$.
\end{proof} 

\begin{lemma} Let $\gamma_1,\dots,\gamma_d$ be a $K$-basis of $V_{L}(\varphi)$ whose image in $V_p(\varphi)$ is a $\co_{K_p}$-basis of $T_p(\varphi)$ and let $\ff$ be a multiple of $\ff_B$. Assume that $L(\bar{\varphi},1)\neq 0$.  Then there is an equality of invertible $\co_{K_p}$-submodules 
$$ \co_{K_p}\cdot z_{p^\infty\ff}(\gamma_1)\wedge\cdots\wedge z_{p^\infty\ff}(\gamma_d)=  \mydet^{-1}_{\co_{K_p}} R\Gamma(\co_{K}[\frac{1}{p\ff}], T_p(\varphi)(1))$$
of 
$$ \mydet^{-1}_{K_p} R\Gamma(\co_{K}[\frac{1}{p\ff}], V_p(\varphi)(1)) \simeq \mydet_{K_p} H^1(\co_{K}[\frac{1}{p\ff}], V_p(\varphi)(1)).$$
\label{descent3}\end{lemma}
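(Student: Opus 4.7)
The plan is to deduce this directly from Lemma \ref{descent2} by a restriction of scalars from $\co_{L_p}$ to $\co_{K_p}$. First I would record the structural facts needed for this reduction. Since $\co_L$ is finite and torsion-free over the Dedekind domain $\co_K$ it is projective; hence $\co_{L_p}=\co_L\otimes_\bz\bz_p$ is a finite projective $\co_{K_p}$-module, and since $\co_{K_p}$ is semilocal, projective modules are free, so $\co_{L_p}$ is free of rank $d=[L:K]$ over $\co_{K_p}$.

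Next I would invoke the content of Lemma \ref{descent2} (and of Prop.~\ref{descent} with Lemma~\ref{descentlemma} in its proof): the complex $R\Gamma(\co_K[\tfrac{1}{p\ff}],T_p(\varphi)(1))$ is cohomologically concentrated in degree one, with $H^1$ a free $\co_{L_p}$-module of rank one, and $z_{p^\infty\ff}(\eta_0)$ is a $\co_{L_p}$-basis of $H^1$ for any $\co_{L_p}$-basis $\eta_0$ of $T_p(\varphi)$. Restricting scalars to $\co_{K_p}$, the complex remains perfect and concentrated in degree one, and $H^1$ is free of rank $d$ over $\co_{K_p}$. Consequently
$$\mydet\nolimits^{-1}_{\co_{K_p}} R\Gamma\bigl(\co_K[\tfrac{1}{p\ff}],T_p(\varphi)(1)\bigr)\;=\;\mydet\nolimits_{\co_{K_p}}H^1\;=\;{\textstyle\bigwedge}^{d}_{\co_{K_p}}H^1,$$
so the assertion reduces to showing that $z_{p^\infty\ff}(\gamma_1)\wedge\cdots\wedge z_{p^\infty\ff}(\gamma_d)$ is a $\co_{K_p}$-basis of $\bigwedge^d_{\co_{K_p}}H^1$.

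To see this, fix a $\co_{L_p}$-basis $\eta_0$ of $T_p(\varphi)$ and write $\gamma_i=a_i\eta_0$ with $a_i\in\co_{L_p}$. The assumption that the $\gamma_i$ form a $\co_{K_p}$-basis of $T_p(\varphi)$ forces $a_1,\dots,a_d$ to be a $\co_{K_p}$-basis of $\co_{L_p}$. The map $\gamma\mapsto z_{p^\infty\ff}(\gamma)$, being defined by cup product with the Iwasawa element $z_{p^\infty\ff}\in H^1_{p^\infty\ff}(\bz_p(1))$ followed by a localisation map, is manifestly $\co_{L_p}$-linear, so $z_{p^\infty\ff}(\gamma_i)=a_i\cdot z_{p^\infty\ff}(\eta_0)$. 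Hence
$$z_{p^\infty\ff}(\gamma_1)\wedge\cdots\wedge z_{p^\infty\ff}(\gamma_d)\;=\;\bigl(a_1 z_{p^\infty\ff}(\eta_0)\bigr)\wedge\cdots\wedge\bigl(a_d z_{p^\infty\ff}(\eta_0)\bigr),$$
which is indeed a $\co_{K_p}$-basis of $\bigwedge^d_{\co_{K_p}}H^1$ because $\{a_i z_{p^\infty\ff}(\eta_0)\}_{i=1}^d$ is a $\co_{K_p}$-basis of $H^1$. There is no substantive obstacle here beyond the bookkeeping for passing from $\co_{L_p}$-determinants to $\co_{K_p}$-determinants under restriction of scalars; the work has already been done in Lemma \ref{descent2}.
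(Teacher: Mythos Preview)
Your strategy matches the paper's: both deduce the statement from Lemma~\ref{descent2} by restriction of scalars from $\co_{L_p}$ to $\co_{K_p}$, and both end with the wedge of $b_i\,z_{p^\infty\ff}(\gamma)$ over a $\co_{K_p}$-basis $\{b_i\}$ of $\co_{L_p}$.

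There is, however, a gap in your justification. The claim that the integral complex $R\Gamma(\co_K[\tfrac{1}{p\ff}],T_p(\varphi)(1))$ is concentrated in degree~$1$ with $H^1$ free of rank one over $\co_{L_p}$ does \emph{not} follow from Prop.~\ref{descent} or Lemma~\ref{descentlemma}. Those results control the localized Iwasawa complex $\Delta$ over $R=\Lambda_{\co,\fq}$ (that is what the Remark after Lemma~\ref{descentlemma} refers to) and its reduction $\Delta\otimes^{\bl}_R k\simeq R\Gamma(\co_K[\tfrac{1}{p\ff}],V_{L_\csp}(\varphi)(1))$; i.e.\ they give the structure only with \emph{rational} coefficients. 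With integral coefficients the torsion of $H^1$ identifies with $H^0(G_K,T_p(\varphi)(1)\otimes\bq_p/\bz_p)$, and $H^2$ can be nonzero as well; neither need vanish, and they typically do not for the small primes $p$ that are the whole point of the paper. The correct passage from $\co_{L_p}$ to $\co_{K_p}$ works purely at the level of determinants: since $\co_{L_p}$ is finite free over $\co_{K_p}$, the standard norm compatibility of the determinant functor shows that $\mydet^{-1}_{\co_{K_p}}R\Gamma(\co_K[\tfrac{1}{p\ff}],T_p(\varphi)(1))$ is generated inside $\bigwedge^d_{K_p}H^1(\co_K[\tfrac{1}{p\ff}],V_p(\varphi)(1))$ by $(b_1 z)\wedge\cdots\wedge(b_d z)$ exactly when $\mydet^{-1}_{\co_{L_p}}R\Gamma(\co_K[\tfrac{1}{p\ff}],T_p(\varphi)(1))$ is generated inside $H^1(\co_K[\tfrac{1}{p\ff}],V_p(\varphi)(1))$ by~$z$. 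Applied to Lemma~\ref{descent2} with $z=z_{p^\infty\ff}(\gamma)$, this is what the paper's ``immediately'' encodes.
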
 

\begin{proof} Since the map $\gamma\mapsto z_{p^\infty\ff}(\gamma)$ is $K$-linear it suffices to prove Lemma \ref{descent3} for one particular basis $\{\gamma_i\}$ satisfying its condition. Choosing $\gamma_i=b_i\cdot\gamma$ where $b_i$ is a $K$-basis of $L$ which is also an $\co_{K_p}$-basis of $\co_{L_p}$ and $\gamma$ is as in Lemma \ref{descent2} we deduce Lemma \ref{descent3} immediately from Lemma \ref{descent2}.
\end{proof} 

By Prop. \ref{bcomp} there is an isomorphism of $G_K$-representations $$V_p(\varphi)\simeq H^1_{et}(B\otimes_{K}\bar{\bq},\bq_p)$$ over $K_p$. The following is an analogue of Lemma \ref{descent3} where the  $G_K$-stable $\co_{K_p}$-lattice $T_p(\varphi)$ has been replaced by the $G_K$-stable $\co_{K_p}$-lattice $H^1(B\otimes_K{\bar{\bq}},\bz_p)$. Also recall the isomorphism $H^1(B\otimes_K{\bar{\bq}},\bz_p)(1)\simeq T_p({^t}B)$.

\begin{lemma} Let $\tilde{\gamma}_1,\dots,\tilde{\gamma}_d$ be a $K$-basis of $V_L(\varphi)\simeq H^1(B(\bc),\bq)$ whose image in $V_p(\varphi)\simeq H^1_{et}(B\otimes_{K}\bar{\bq},\bq_p)$ is a $\co_{K_p}$-basis of $H^1(B\otimes_K{\bar{\bq}},\bz_p)$ and let $\ff$ be a multiple of $\ff_B$. 
Assume that $L(\bar{\varphi},1)\neq 0$.  Then there is an equality of invertible $\co_{K_p}$-submodules 
$$ \co_{K_p}\cdot z_{p^\infty\ff}(\tilde{\gamma}_1)\wedge\cdots\wedge z_{p^\infty\ff}(\tilde{\gamma}_d)=  \mydet^{-1}_{\co_{K_p}} R\Gamma(\co_{K}[\frac{1}{p\ff}], T_p({^t}B))$$
of 
$$ \mydet^{-1}_{K_p} R\Gamma(\co_{K}[\frac{1}{p\ff}], V_p({^t}B)) \simeq \mydet_{K_p} H^1(\co_{K}[\frac{1}{p\ff}], V_p({^t}B)).$$
\label{descent4}\end{lemma}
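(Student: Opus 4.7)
The plan is to reduce Lemma \ref{descent4} to Lemma \ref{descent3} by tracking a change of $\co_{K_p}$-lattice. First, by Lemma \ref{descent3} I may choose a $K$-basis $\gamma_1,\ldots,\gamma_d$ of $V_L(\varphi)\simeq H^1(B(\bc),\bq)$ whose image in $V_p(\varphi)$ is a $\co_{K_p}$-basis of $T_p(\varphi)$. Via the $G_K$-equivariant identification $V_p(\varphi)\simeq H^1_{et}(B\otimes_K\bar{\bq},\bq_p)$ of Prop. \ref{bcomp}, after twisting by $\bq_p(1)$ both $T_p(\varphi)(1)$ and $T_p({^t}B) = H^1(B\otimes_K\bar{\bq},\bz_p)(1)$ sit as $G_K$-stable $\co_{K_p}$-lattices in the common $K_p$-vector space $V_p({^t}B)$. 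Let $M=(M_{ij})\in GL_d(K_p)$ denote the change-of-basis matrix determined by $\tilde\gamma_i = \sum_j M_{ij}\gamma_j$, so that the fractional $\co_{K_p}$-ideal $(\det M)$ measures the ratio of the two lattices.

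Since $\gamma \mapsto z_{p^\infty\ff}(\gamma)$ is $K$-linear and extends $K_p$-linearly after base change, the standard formula for a wedge product under a linear change of basis gives
$$z_{p^\infty\ff}(\tilde\gamma_1)\wedge\cdots\wedge z_{p^\infty\ff}(\tilde\gamma_d) \;=\; \det(M)\cdot z_{p^\infty\ff}(\gamma_1)\wedge\cdots\wedge z_{p^\infty\ff}(\gamma_d).$$
Combining this with Lemma \ref{descent3} shows that the left-hand side of Lemma \ref{descent4} equals $\det(M)\cdot\mydet^{-1}_{\co_{K_p}}R\Gamma(\co_K[\frac{1}{p\ff}],T_p(\varphi)(1))$ as an invertible $\co_{K_p}$-submodule of the one-dimensional $K_p$-vector space $\mydet^{-1}_{K_p}R\Gamma(\co_K[\frac{1}{p\ff}],V_p({^t}B))$.

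It then remains to verify the change-of-lattice identity
$$\mydet^{-1}_{\co_{K_p}}R\Gamma(\co_K[\tfrac{1}{p\ff}],T_p({^t}B)) \;=\; \det(M)\cdot\mydet^{-1}_{\co_{K_p}}R\Gamma(\co_K[\tfrac{1}{p\ff}],T_p(\varphi)(1))$$
inside the same ambient $K_p$-line. To establish this I would pick a common $G_K$-stable $\co_{K_p}$-sublattice $L_0 \subseteq T_p(\varphi)(1)\cap T_p({^t}B)$ of finite index in each, and apply the multiplicativity of the determinant functor to the short exact sequences $0\to L_0 \to T_p(\varphi)(1)\to N_1\to 0$ and $0\to L_0 \to T_p({^t}B)\to N_2\to 0$ with $N_1,N_2$ finite $G_K$-modules. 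The required identity reduces to comparing the $\co_{K_p}$-Fitting ideals of $N_1$ and $N_2$, a routine linear-algebra computation with $M$. This bookkeeping is the main (minor) obstacle, but is standard in the ETNC formalism (cf. the proof of Lemma \ref{localvolume} where analogous local determinant computations are already carried out). Combining both displayed equalities then yields Lemma \ref{descent4}.
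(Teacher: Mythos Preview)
Your strategy is the same as the paper's: reduce to Lemma \ref{descent3} by comparing the two $G_K$-stable lattices $T_p({^t}B)$ and $T_p(\varphi)(1)$ inside $V_p({^t}B)$. The paper streamlines your common-sublattice step by simply \emph{choosing} $T_p(\varphi)$ to be the $\co_{L_p}$-span of $H^1(B_{\bar K},\bz_p)$, so that there is a single short exact sequence $0\to T_p({^t}B)\to T_p(\varphi)(1)\to M\to 0$ with $M$ finite; this avoids your two quotients $N_1,N_2$.

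There is, however, a genuine gap in your final paragraph. The identity you need,
\[
\mydet^{-1}_{\co_{K_p}}R\Gamma(\co_K[\tfrac{1}{p\ff}],T_p({^t}B)) \;=\; \det(M)\cdot\mydet^{-1}_{\co_{K_p}}R\Gamma(\co_K[\tfrac{1}{p\ff}],T_p(\varphi)(1)),
\]
does \emph{not} reduce to comparing Fitting ideals of $N_1,N_2$ by linear algebra. What you must control is $\mydet^{-1}_{\co_{K_p}}R\Gamma(\co_K[\tfrac{1}{p\ff}],N)$ for a finite $G_K$-module $N$, and this is governed by the global Euler characteristic formula of Tate (in its equivariant form, \cite{flach98}[Thm.~5.1]): for $K$ imaginary quadratic one gets
\[
\mydet^{-1}_{\co_{K_p}}R\Gamma(\co_K[\tfrac{1}{p\ff}],N)=\prod_i |H^i|_{K_p}^{(-1)^i}=|N|^{-1}_{K_p}.
\]
The exponent $-1$ here is an arithmetic fact about the base field (one complex place, no real places), not a formal property of determinants; without it the factor $|N|_{K_p}$ coming from the change of basis on the $\tilde\gamma_i$ side would not cancel correctly. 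Your reference to Lemma \ref{localvolume} is misplaced: that lemma computes \emph{local} determinants $R\Gamma_{/f}(F_v,T)$, whereas what is needed here is the \emph{global} Euler characteristic. Once you invoke the equivariant Tate formula, your argument goes through and coincides with the paper's.
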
 

\begin{proof} 
The order $\End_K(B)$ is contained in the maximal order $\co_{L}$. By choosing $T_p(\varphi)$ to be the $\co_{L_p}$-span of $H^1(B\otimes_K{\bar{\bq}},\bz_p)$ inside  $V_p(\varphi)\simeq H^1_{et}(B\otimes_{K}\bar{\bq},\bq_p)$ we can assume that $H^1(B\otimes_K{\bar{\bq}},\bz_p)$ is contained in  $T_p(\varphi)$. Define a finite $\co_{K_p}[G_K]$-module $M$ by the exact sequence
\begin{equation} 0\to H^1(B\otimes_K{\bar{\bq}},\bz_p)(1)\simeq  T_p({^t}B) \to T_p(\varphi)(1)\to M\to 0.\label{exact}\end{equation}
This sequence induces an exact triangle in the derived category of $\co_{K_p}$-modules
$$ R\Gamma(\co_{K}[\frac{1}{p\ff}], T_p({^t}B))\to R\Gamma(\co_{K}[\frac{1}{p\ff}], T_p(\varphi)(1)) \to R\Gamma(\co_{K}[\frac{1}{p\ff}],M)\to$$
and an isomorphism of invertible $\co_{K_p}$-modules
\begin{align} \mydet^{-1}_{\co_{K_p}} R\Gamma(\co_{K}[\frac{1}{p\ff}], T_p(\varphi)(1))\simeq\  & \mydet^{-1}_{\co_{K_p}} R\Gamma(\co_{K}[\frac{1}{p\ff}], T_p({^t}B))\label{determinant}\\
& \otimes_{\co_{K_p}} \mydet^{-1}_{\co_{K_p}} R\Gamma(\co_{K}[\frac{1}{p\ff}],M).\notag\end{align}
The complex $R\Gamma(\co_{K}[\frac{1}{p\ff}],M)$ has finite cohomology groups and there is an equality of invertible $\co_{K_p}$-submodules
\begin{equation}\mydet^{-1}_{\co_{K_p}} R\Gamma(\co_{K}[\frac{1}{p\ff}],M)= \prod_i | H^i(\co_{K}[\frac{1}{p\ff}],M)|_{K_p}^{(-1)^i}=|M|^{-1}_{K_p}\label{euler}\end{equation}
of
$$\left(\mydet^{-1}_{\co_{K_p}} R\Gamma(\co_{K}[\frac{1}{p\ff}],M)\right)\otimes_{\co_{K_p}}K_p\simeq K_p.$$
Here $|N|_{K_p}$ denotes the order ideal of a finite $\co_{K_p}$-module $N$ and the last identity in (\ref{euler}) is Tate's formula for the Euler characteristic \cite{milduality}[Thm. I.5.1], or rather its equivariant generalization \cite{flach98}[Thm. 5.1]. If now $\gamma_1,\dots,\gamma_d$ is a basis as in Lemma \ref{descent3} we deduce from Lemma \ref{descent3}, (\ref{determinant}) and (\ref{euler})
\begin{equation} \co_{K_p}\cdot z_{p^\infty\ff}(\gamma_1)\wedge\cdots\wedge z_{p^\infty\ff}(\gamma_d)=  \mydet^{-1}_{\co_{K_p}} R\Gamma(\co_{K}[\frac{1}{p\ff}], T_p({^t}B))\cdot |M|^{-1}_{K_p}.\label{intermediate}\end{equation}
On the other hand, if $\tilde{\gamma}_1,\dots,\tilde{\gamma}_d$ is a basis as in Lemma \ref{descent4} the exact sequence (\ref{exact}) shows that
$$ \co_{K_p}\cdot\tilde{\gamma}_1\wedge\cdots\wedge\tilde{\gamma}_d=|M|_{K_p}\cdot \gamma_1\wedge\cdots\wedge \gamma_d$$
and $K$-linearity of $\gamma\mapsto z_{p^\infty\ff}(\gamma)$ gives
$$\co_{K_p}\cdot z_{p^\infty\ff}(\tilde{\gamma}_1)\wedge\cdots\wedge z_{p^\infty\ff}(\tilde{\gamma}_d)=|M|_{K_p} \cdot
z_{p^\infty\ff}(\gamma_1)\wedge\cdots\wedge z_{p^\infty\ff}(\gamma_d).$$
Comparing this last identity with (\ref{intermediate}) gives Lemma \ref{descent4}.
\end{proof} 

Since
$$ T_p({^t}\!B)\simeq\Ind_{G_F}^{G_K}T_p({^t}E)$$
Shapiro's Lemma gives a canonical isomorphism
$$R\Gamma(\co_{K}[\frac{1}{p\ff}], T_p({^t}B))\simeq R\Gamma(\co_{F}[\frac{1}{p\ff}], T_p({^t}E)).$$
Furthermore there are canonical isomorphisms
$$H^0(B,\Omega_{B/K})\simeq H^0(E,\Omega_{E/F})$$
and
$$H^1(B(\bc),\bz)\simeq H^1(E(\bc\otimes_{K}F),\bz)\simeq \prod_{v\mid\infty}H^1(E(F_v),\bz).$$
Lemma \ref{descent4} and Corollary \ref{breciprocity} can therefore be rewritten in terms of $E/F$ as follows.
 
\begin{lemma} Let $E/F$ be an elliptic curve as in Theorem \ref{main}, in particular assume that $L(\bar{\psi},1)\neq 0$. Let $\tilde{\gamma}_1,\dots,\tilde{\gamma}_d$ be a $K$-basis of $H^1(E(\bc),\bq)$ whose image in $H^1_{et}(E\otimes_{K}\bar{\bq},\bq_p)$ is a $\co_{K_p}$-basis of $H^1(E\otimes_K{\bar{\bq}},\bz_p)$ and let $\ff$ be a multiple of $\ff_B=N_{F/K}\ff_E\cdot D_{F/K}$. Put
$$ z:=z_{p^\infty\ff}(\tilde{\gamma}_1)\wedge\cdots\wedge z_{p^\infty\ff}(\tilde{\gamma}_d)\in \mydet_{K_p} H^1(\co_{F}[\frac{1}{p\ff}], V_p({^t}E)).$$ 
Then there is an equality of invertible $\co_{K_p}$-submodules 
$$ \co_{K_p}\cdot z=  \mydet^{-1}_{\co_{K_p}} R\Gamma(\co_{F}[\frac{1}{p\ff}], T_p({^t}E))$$
of 
$$ \mydet^{-1}_{K_p} R\Gamma(\co_{F}[\frac{1}{p\ff}], V_p({^t} E)) \simeq \mydet_{K_p} H^1(\co_{F}[\frac{1}{p\ff}], V_p({^t}E)).$$
Moreover 
$$z':=\mydet_{K_p}(\exp^*)(z)\in \mydet_{K_p} H^0(E_{F_p},\Omega_{E_{F_p}/F_p})$$
is an element of $\mydet_K H^0(E,\Omega_{E/F})$ such that
$$ \mydet_{K_\br}(\mathrm{per})\left(z' \right)=L_{p\ff}(\bar{\psi},1)\cdot \fa(z)\cdot\mydet_{\co_K}\left(\prod\limits_{v\mid\infty}H^1(E(F_v),\bz)\right)$$
where $\fa(z)$ is a fractional $\co_K$-ideal prime to $p$.
\label{descent5}\end{lemma}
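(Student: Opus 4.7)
The plan is to deduce Lemma \ref{descent5} directly from Lemma \ref{descent4} and Corollary \ref{breciprocity}, transported from $B = \Res_{F/K} E$ to $E/F$ via the canonical identifications collected in the paragraph preceding the statement. First, Shapiro's Lemma supplies a canonical quasi-isomorphism
$$R\Gamma(\co_F[\tfrac{1}{p\ff}], T_p({}^tE)) \simeq R\Gamma(\co_K[\tfrac{1}{p\ff}], T_p({}^tB))$$
(and its $V_p$-version), and the Artin comparison combined with Prop. \ref{bcomp} identifies $H^1_{et}(B \otimes_K \bar\bq, \bz_p)$ with $H^1(E \otimes_K \bar\bq, \bz_p)$. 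Hence the hypothesis that $\tilde\gamma_1, \ldots, \tilde\gamma_d$ is a $\co_{K_p}$-basis of the latter matches exactly the hypothesis of Lemma \ref{descent4} for $B$. Invoking that lemma and pushing along Shapiro's isomorphism yields the first assertion.

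For the de Rham assertion, the target of the dual exponential for $V_p({}^tE)/F_p$ is $\bigoplus_{v|p} H^0(E_{F_v}, \Omega_{E_{F_v}/F_v}) \simeq H^0(E_{F_p}, \Omega_{E_{F_p}/F_p})$, which by Prop. \ref{bcomp} is canonically isomorphic to $H^0(B_{K_p}, \Omega_{B_{K_p}/K_p})$, the target of $\exp^*$ for $V_p({}^tB)/K_p$. By Corollary \ref{breciprocity}, each $z_{p^\infty\ff}(\tilde\gamma_i)'$ lies in the $K$-rational subspace $S(\varphi) \simeq H^0(B, \Omega_{B/K}) \simeq H^0(E, \Omega_{E/F})$, so their wedge product $z'$ lies in $\mydet_K H^0(E, \Omega_{E/F})$.

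For the period identity, Corollary \ref{breciprocity} gives
$$\mydet_{K_\br}(\mathrm{per}_B)(z') = L_{p\ff}(\bar\psi, 1) \cdot (\tilde\gamma_1 \wedge \cdots \wedge \tilde\gamma_d),$$
and the isomorphism $H^1(B(\bc), \bz) \simeq \prod_{v|\infty} H^1(E(F_v), \bz)$ identifies $\mathrm{per}_B$ with $\mathrm{per}$. I then define the fractional $\co_K$-ideal $\fa(z)$ by
$$\co_K \cdot (\tilde\gamma_1 \wedge \cdots \wedge \tilde\gamma_d) = \fa(z) \cdot \mydet_{\co_K}\left(\prod_{v|\infty} H^1(E(F_v), \bz)\right),$$
which yields the claimed identity. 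Since $\tilde\gamma_1, \ldots, \tilde\gamma_d$ is by hypothesis a $\co_{K_p}$-basis of $H^1(E \otimes_K \bar\bq, \bz_p)$, and this module is the $p$-adic completion of the Betti lattice $\prod_{v|\infty} H^1(E(F_v), \bz)$ under the Artin comparison, the ideal $\fa(z)$ becomes trivial after $p$-adic completion, i.e.\ it is prime to $p$.

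The only point demanding genuine care — and hence the main ``obstacle'' in an otherwise translational argument — is verifying that the Shapiro and restriction-of-scalars identifications are simultaneously compatible with the dual exponential, the Artin comparison, and the integration period pairing. These compatibilities are standard functorialities for $B = \Res_{F/K} E$ and its motivic structure (already invoked implicitly in Prop. \ref{bcomp}), but they must be tracked in tandem so that the basis transports above preserve $K$-rationality at each stage.
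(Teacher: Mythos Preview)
Your proof is correct and follows essentially the same route as the paper's own proof, which simply states that the result is immediate from Lemma \ref{descent4} and Corollary \ref{breciprocity} together with the observation that the $\co_{K_p}$-basis hypothesis on the $\tilde\gamma_i$ forces $\fa(z)$ to be prime to $p$. You have made the transport along Shapiro's lemma and the various comparison isomorphisms more explicit than the paper does, but the substance of the argument is identical.
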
 

\begin{proof} This is immediate from Lemma \ref{descent4} and Corollary \ref{breciprocity}. Note that the assumption 
$$\co_{K_p}\cdot\tilde{\gamma}_1\wedge\cdots\wedge\tilde{\gamma}_d=\mydet_{\co_{K_p}}H^1(B\otimes_K{\bar{\bq}},\bz_p)$$
on the basis $\tilde{\gamma}_i$ translates into the fact that
$$\co_K\cdot\tilde{\gamma}_1\wedge\cdots\wedge\tilde{\gamma}_d=\fa(z)\cdot\mydet_{\co_K}\left(\prod\limits_{v\mid\infty}H^1(E(F_v),\bz)\right)$$
for some fractional $\co_K$-ideal $\fa(z)$ prime to $p$.
\end{proof}

\begin{proof} (of Theorem \ref{main}) It suffices to produce an element $z$ as in Prop. \ref{keyelliptic} for all prime numbers $p$. The content of Lemma \ref{descent5} is precisely that the element $z$ satisfies the assumptions of Prop. \ref{keyelliptic} for $S=\{v\mid p\ff\}$. Since it was shown in Prop. \ref{descent} that the $p$-primary part of $\Sha(E/F)$ (and of $E(F)$) is finite for any prime $p$, the finiteness of $\Sha(E/F)$ follows from the global formula for its cardinality given by Prop. \ref{keyelliptic}. This concludes the proof of Theorem \ref{main}.
\end{proof}

%\begin{bibdiv}
%\begin{biblist}
%\bibselect{all-my-references2}
%\end{biblist}
%\end{bibdiv}

\end{document}